 \theoremstyle{definition}
 \newtheorem{defn}{Definition}
 \theoremstyle{plain}
 \newtheorem{thm}{Theorem}
 \newtheorem*{thm*}{Theorem}
 \newtheorem{prop}{Proposition}
  \newtheorem*{prop*}{Proposition}
 \newtheorem{cor}{Corollary}
  \newtheorem*{cor*}{Corollary}
 \newtheorem{lem}{Lemma}
  \newtheorem*{lem*}{Lemma}
 \theoremstyle{remark}
 \newtheorem*{remark*}{Remark}
 \renewcommand{\abstractname}{}
  \newcounter{ab}
\title{ A problem of restriction  $g_n\downarrow g_{n-1}$  for  Lie algebras of series $A,B,C,D$. }
 \author{D. V.  Artamonov}
  \date{}
\begin{document}
 \maketitle

\renewcommand{\abstractname}{}

\begin{abstract}
Using the Zhelobenko's approach we investigate a branching of an irreducible representation of  $g_n$ under the restriction of algebras $g_n\downarrow g_{n-1}$, where  $g_n$ is a Lie algebra of type
$B_n$, $C_n$, $D_n$ or a Lie algebra of type $A$, where in this case we put
$g_{n}=\mathfrak{gl}_{n+1}$, $g_{n-1}=\mathfrak{gl}_{n-1}$. We give a new  explicit description of  the space of the  $g_{n-1}$-highest vectors,  then we construct  a base in this space.
The case  $n=2$ is considered separately for different algebras, but a passage from $n=2$ to an arbitrary $n$ is the same for all series  $A$,
$B$, $C$, $D$.  This new procedure has the following advantage: it establishes  a relation between spaces of  $g_{n-1}$-highest vectors for different series of algebras.  This procedure  describes an extension of Gelfand-Tsetlin tableaux to the left.

\end{abstract}


\section{Introduction}

In the book  \cite{zh2} Zhelobenko used realizations of a
representation of a simple Lie algebra $\mathfrak{g}$ in the space
of all functions on a corresponding Lie group and in the space of
functions on a subgroup of unipotent upper-triangular matrices. In
\cite{zh2} conditions that define a representation of a given highest
weight in these realization are presented. These realization are very convenient
for an investigation of restriction problems. A problem of a
restriction $\mathfrak{g}\downarrow \mathfrak{k}$, where
$\mathfrak{g}$ is a Lie algebra and $\mathfrak{k}$ is it's
subalgebra is a problem of a description   of $\mathfrak{k}$-highest
vectors in an irreducible representation of $\mathfrak{g}$. A
solution of such a problem is a key step in a construction of a
Gelfand-Tsetlin type base in a representation of a Lie algebra.

 In \cite{zh2} the cases of restriction problems
  $\mathfrak{gl}_n\downarrow \mathfrak{gl}_{n-1}$ and  $\mathfrak{sp}_{2n}\downarrow \mathfrak{sp}_{2n-2}$ are considered. In the first case a construction of a base in the space of  $\mathfrak{gl}_{n-1}$-highest vectors encoded by Gelfand-Tsetlin tableaux is given. Then using a realization in the space of functions on the
  subgroup of unipotent upper-triangular matrices in \cite{zh2} it is shown that these restriction problems are equivalent.
  Using this equivalence the problem $\mathfrak{sp}_{2n}\downarrow \mathfrak{sp}_{2n-2}$ is solved.

Later the problems  $g_n\downarrow g_{n-1}$, where
$g_n=\mathfrak{o}_{2n+1}$, $\mathfrak{sp}_{2n}$
  or $\mathfrak{o}_{2n}$ in the realization in the space of functions on unipotent upper-triangular matrices
  was investigated by V.V. Shtepin in \cite{sh1},  \cite{sh2}, \cite{sh3}.
  He obtained  solutions of these problems but a relation with the problem $\mathfrak{gl}_{n+1}\downarrow\mathfrak{gl}_{n-1}$  was not discovered.

A.I. Molev in \cite{Mol1}, \cite{Mol2}, \cite{Mol3} (see also
\cite{M}) obtained a solution of a problem of construction of a
Gelfand-Tsetlin type base for a finite dimensional representation of
a  Lie algebra $g_n=\mathfrak{o}_{2n+1}$, $\mathfrak{sp}_{2n}$
  or $\mathfrak{o}_{2n}$.  Molev gave a construction of base vectors and obtained formulas for the action of generators of the algebra in this base.
  But he used another technique. To obtain  a solution of the problem  $g_n\downarrow g_{n-1}$  an action of a Yangian
  on the space of  $g_{n-1}$-highest  vectors with  a fixed highest
  weight was constructed. For all series of algebras Gelfand-Tsetlin type tableaux constructed by Molev
  have the following property.  There right part has a structure
  that depends on a series of the algebra. But as $n$ increases a
  tableau grows to the left and the structure of the extension of the tableau
  does not depend on the type of the algebra.

This  fact is a starting point of the present paper. The main result is a new construction of a solution of the problem $g_n\downarrow g_{n-1}$  (Theorem \ref{osnl1}) establishing a relation between solutions for different series  (Corollary \ref{osncor}).
We investigate
the problem $g_n\downarrow g_{n-1}$  in the  realization in the space of
function on the whole group.  

 First of all for series  $B$, $C$, $D$ we give explicit conditions that  define an irreducible representation
  of a given highest weight in the realization in function on the whole group (see Theorem \ref{tzh}, where the conditions are formulated, and Section \ref{islm}, where these conditions are rewritten explicitely),  in \cite{zh2} it is done only for the series $A$.
Then we obtain a description of functions on the group that
correspond to $g_{n-1}$-highest vectors
  (Theorem \ref{le5} and Theorem \ref{lempoc}).

   Finally in the main Sections  \ref{abcd2}, \ref{abcdn}  we give a procedure of a construction of a Gelfand-Tsetlin type base in this space.
   Firstly  separately for  the series
     $A$, $B$, $C$,  $D$ it is done for  $n=2$ (see Section \ref{abcd2}),  and then for all these series
     simultaneously we describe a passage from the problem $g_2\downarrow g_1$ to $g_n\downarrow
g_{n-1}$ (Section \ref{abcdn}, Theorem \ref{osnl1} ).  This
procedure is interpreted as an extension of a Gelfand-Tsetlin
tableau to the left. We call this procedure an extension of  a problem
of restriction $g_n\downarrow g_{n-1}\supset g_2\downarrow g_1$.

\section{Introduction}

\subsection{Algebras $\mathfrak{o}_{2n+1}$,  $\mathfrak{o}_{2n}$, $\mathfrak{sp}_{2n}$}
\label{algl}

The Lie algebras $\mathfrak{o}_{2n}$, $\mathfrak{sp}_{2n}$ are
considered as subalgebras in the algebra of all matrices $2n\times
2n$, whose  rows are indexed  $i,j=-n,...,-1,1,...,n$, the algebra
$\mathfrak{o}_{2n+1}$ is a subalgebra in the space of all
$(2n+1)\times (2n+1)$-matrices, whose rows and columns are indexed
by $i,j=-n,...,-1,0,1,...,n$.

The algebras $\mathfrak{o}_{2n+1}$ and   $\mathfrak{o}_{2n}$
 are generated by

\begin{equation}\label{fbd}F_{i,j}=E_{i,j}-E_{-j,-i},\end{equation}

where $i,j=-n,...,-1,0,1,...,n$ in the case  $\mathfrak{o}_{2n+1}$
and $i,j=-n,...,-1,1,...,n$  in the case $\mathfrak{o}_{2n}$.

The algebra $\mathfrak{sp}_{2n} $ is generated by
\begin{equation}\label{fd}F_{i,j}=E_{i,j}-sign(i)sign(j)E_{-j,-i},\end{equation}
where $i,j=-n,...,-1,1,...,n$.

Subalgebras  $\mathfrak{o}_{2n-1}$,   $\mathfrak{o}_{2n-2}$,
$\mathfrak{sp}_{2n-2} $ are generated by  $F_{i,j}$ for
$i,j=-n,...,-2,2,...,n$  in the case $C$, $D$ and
$i,j=-n,...,-2,0,2,...,n$ in the case $B$.

 Also we use the algebra $\mathfrak{gl}_{n+1}$ of all matrices $(n+1)\times (n+1)$, whose rows and columns
 are indexed by  $i,j=-n,...,-1,1$.
 This algebra is generated by  $E_{i,j}$,   $i,j=-n,...,-1,1$, choose a subalgebra   $\mathfrak{gl}_{n-1}$, spanned by  $E_{i,j}$,    $i,j=-n,...,-2$.

\subsection{Functions on the group}
\label{fugu} We use a realization of a representation of a Lie
algebra in the space of functions on the  Lie group $G=O_{2n+1}$,
$Sp_{2n}$, $O_{2n}$, $GL_{n+1}$ (see \cite{zh2}). Onto a functions
$f(g)$, $g\in G$ an elements $X\in G_{}$ acts  by right shifts according to  the formula

\begin{equation}
\label{xf}
(Xf)(g)=f(gX).
\end{equation}

Fix a highest weight $[m]=[m_{-n},...,m_{-1}]$ in the cases
$O_{2n+1}$, $Sp_{2n}$, $O_{2n}$ and
$[m]=[m_{-n},...,m_{-1},m_{1}=0]$ in the case $GL_{n+1}$. In the
cases $G=Sp_{2n}$,  $GL_{n+1}$ the highest weight is integer and
in the case   $ O_{2n+1}$, $O_{2n}$ it can be half-integer. In the cases   $GL_{n+1}$,  $O_{2n+1}$, $Sp_{2n}$ the weight is non-negative and in the case  $O_{2n}$ we can have $m_{-1}<0$.

\subsection{Determinants and  a formula for the highest vector}
\label{stvv}

 Let $a_{i}^{j}$, $i,j=-n,...,0,...,n$
   be a function of a matrix element on the group $GL_{2n+1}$. Here $j$ is a row index and  $i$ is a column index.
   Later we consider restrictions of these functions onto  $G\subset GL_{2n+1}$.
    Thus relations between $a_{i}^{j}$ appear.




Put

\begin{equation}
\label{dete}
a_{i_1,...,i_k}:=det(a_i^j)_{i=i_1,...,i_k}^{j=-n,...,-n+k-1}.
\end{equation}

That is we take a determinant of a submatrix of a matrix $(a_i^j)$,
formed by rows $-n,...,-n+k-1$ and columns $i_1,...,i_k$. By
formulas \eqref{xf}, \eqref{dete}  we conclude that an operator
$E_{i,j}$ acts onto a determinant by changing column indices
according to the ruler

\begin{equation}
\label{edet1}
E_{i,j}a_{i_1,...,i_k}=a_{\{i_1,...,i_k\}\mid_{j\mapsto i}},
\end{equation}

where $.\mid_{j\mapsto i}$ is an operation of a substitution of $i$
instead of $j$, if  $j\notin\{i_1,...,i_k\}$ then we obtain $0$. An
operator $F_{i,j}$ for the series $B$, $C$, $D$ acts by formulas
\eqref{fbd}, \eqref{fd}.

In the case of the series $D$ put

\begin{equation}
\bar{a}_{i_1,...,i_n}:=det(a_i^j)_{i=i_1,...,i_n}^{j=-n,...,-2,1}.
\end{equation}

Then

\begin{equation}
(a_{-n,...,-2,-1})^{-1}= \bar{a}_{-n,...,-2,1}.
\end{equation}

This equality in the case  $n=2$ is checked by direct computation,
the case of an arbitrary  $n$  is considered as in Lemma \ref{ssp4}.

Now let us give a formula for the highest vector (see \cite{zh2}).
The vector
\begin{align}\begin{split}
\label{stv} &v_0=\prod_{k=-n}^{-2} (a_{-n,...,-k})^{m_{-k}-m_{-k+1}}
a_{-n,...,-2,-1}^{m_{-1}} \text{ in the cases $A$, $B$, $C$, $D$ and
$m_{-1}\geq 0$},\\
&v_0=\prod_{k=-n}^{-2} (a_{-n,...,-k})^{m_{-k}-m_{-k+1}}
\bar{a}_{-n,...,-2,1}^{-m_{-1}} \text{ in the case $D$ and $m_{-1}<
0$},
\end{split}\end{align}
is a highest vector for $g_{n}$ with the weight
 $[m_{-n},...,m_{-1}]$ for the series  $B$, $C$, $D$  and  $[m_{-n},...,m_{-1},0]$ for the series  $A$.
 Note that in the case of an integer highest weight this is a polynomial function. And in the case of
 $B$, $D$
 and a half-integer highest weight this is not a polynomial function
 since it contains a factor
$a_{-n,...,-2,-1}^{m_{-1}}$ or  $\bar{a}_{-n,...,-2,-1}^{-m_{-1}}$.

\subsection{Functions on the subgroup $Z$}
\subsubsection{The action on the functions on $Z$}

 For a typical matrix $X\in G$  one has a Gauss decomposition \begin{equation}\label{gaus}X=\zeta\delta z,\end{equation}
into a product of a lower-triangular unipotent matrix, a diagonal
matrix and an upper-triangular unipotent matrix. We denote a
subgroup of upper-triangular unipotent matrices  as $Z$.
For an element $g\in G$ and a matrix $X$ one has the Gauss decomposition

 $$Xg=\tilde{\zeta}\tilde{\delta}\tilde{ z}.$$

Put
$\tilde{\delta}=diag(\tilde{\delta}_{-n},\tilde{\delta}_{-n+1},...)$.
The action of $G$ on the functions on $Z$ is given by  the formula

 $$
(gf)(z)=\tilde{\delta}_{-n}^{m_{-n}}\tilde{\delta}_{-n+1}^{m_{-n+1}}...\tilde{\delta}_{-1}^{m_{-1}}f(\tilde{z})
 $$

\subsubsection{Conditions that define an irreducible representations in the functions on  $Z$}

\label{funaz}

Functions on $Z$ that form an irreducible representation  with a given
highest weight are selected by the following condition (see
\cite{zh2}).

\begin{enumerate}
\item A function satisfies the indicator system.
\end{enumerate}

 The
indicator system  is a system of PDE of type
\begin{align*}
&L_{-n,-n+1}^{r_{-n}+1}f=0,...,L_{-2,-1}^{r_{-2}+1}f=0,\,\,\,,L_{-1,1}^{r_{-1}+1}f=0\text{ in the cases  $A$, $C$},\\
&L_{-n,-n+1}^{r_{-n}+1}f=0,...,L_{-2,-1}^{r_{-2}+1}f=0,\,\,\,,L_{-1,0}^{r_{-1}+1}f=0\text{ in the case  $B$,}\\
&L_{-n,-n+1}^{r_{-n}+1}f=0,...,L_{-2,-1}^{r_{-2}+1}f=0,\,\,\,L_{-2,1}^{r_{-1}+1}f=0\text{
in the case  $D$}
\end{align*}

  Here  $L_{-i,-j}$ is an operator acting on a function  $f(z)$
and doing a left infinitesimal shift of a function  $f(z)$ onto
$F_{-i,-j}$ for the  series $B$, $C$, $D$ and a shift by $E_{-i,-j}$
for the series $A$.

An exponent  $r_{-i}$ is written  as follows

\begin{align}
\begin{split}
\label{rb}
&r_{-n}=m_{-n}-m_{-n+1},...,r_{-2}=m_{-2}-m_{-1},\,\, r_{-1}=m_{-1}\text{ in the cases  $A$, $C$,}\\
&r_{-n}=m_{-n}-m_{-n+1},...,r_{-2}=m_{-2}-m_{-1},\,\, r_{-1}=2m_{-1}\text{ in the case  $B$,}\\
&r_{-n}=m_{-n}-m_{-n+1},...,r_{-2}=m_{-2}-|m_{-1}|,\,\,
r_{-1}=m_{-2}+|m_{-1}|\text{ in the case  $D$}.
\end{split}
\end{align}

It turns out that such a function  is a polynomial in matrix elements\footnote{Even in the case of half-integer highest weight}.

In \cite{zh2} and also in \cite{sh3}   in the case of the series $D$  the exponents $r_{-2}=m_{-2}-m_{-1}$ and   $r_{-1}=m_{-2}+m_{-1}$ are used.
The reason is that in  \cite{zh2}, \cite{sh3} and in the present paper a different choice of the highest vector
  in the realization in functions on  $G$ is done (see \eqref{stv}).   And the highest vector must satisfy the indicator system.

\subsection{ Conditions that define an irreducible representations in the functions on $G$}

\subsubsection{The general Theorem}

In \cite{zh2} in the case $GL_{n+1}$  the following statement is
proved.  In the realization in the space of functions on the group
$G$ an irreducible representation with the highest weight
$[m_{-n},...,m_{-1},0]$  and a highest vector \eqref{stv} are
selected by conditions

\begin{enumerate}
\item $L_{-}f=0$, where  $L_{-}$ is a left infinitesimal shift by an arbitrary element
   of  $GL_{n+1}$, corresponding to a negative root.
\item $L_{-i,-i}f=m_{-i}f$, where  $L_{-i,-i}$, $i=1,...,n$ is a left infinitesimal shift by an
element of  $GL_{n+1}$, corresponding to a Cartan
element$E_{-i,-i}$.
\item $f$ satisfies the indicator system.
\end{enumerate}

Let us prove an analogous statement for the series $B$, $C$, $D$.

\begin{thm}
\label{tzh}
In the realization in functions on the whole group for the series  $B$, $C$, $D$ an irreducible representation with the highest vector given in Section  \ref{stv},
is selected by  the conditions $1,3$, by the condition $2$ where we change $E_{-i,-i}$ to
$F_{-i,-i}$ in the definition of $L_{-i,-i}$.

\end{thm}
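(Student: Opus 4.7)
The plan is to reduce the theorem to the analogous characterization of the irreducible representation on functions on $Z$ stated in Section~\ref{funaz}, following Zhelobenko's outline for $GL_{n+1}$ from \cite{zh2}. The Gauss decomposition $g=\zeta\delta z$ from \eqref{gaus} provides a bijection between functions on $G$ satisfying conditions~1--2 and arbitrary functions on $Z$; condition~3 then transfers verbatim across this bijection to the indicator system on $Z$.

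The first step is to pin down the dependence of $f$ on the factors $\zeta$ and $\delta$. Condition~1 requires $L_{-}f=0$ for every element corresponding to a negative root of $g_n$; the span of these elements integrates to the lower-triangular unipotent subgroup $\bar{Z}\subset G$, hence $f(\zeta\delta z)=f(\delta z)$. Condition~2, with $L_{-i,-i}$ the left shift by the Cartan element $F_{-i,-i}=E_{-i,-i}-E_{i,i}$, then forces
\[
f(\zeta\delta z)=\delta_{-n}^{m_{-n}}\cdots\delta_{-1}^{m_{-1}}\tilde{f}(z)
\]
for some function $\tilde{f}$ on $Z$, since for $O_{2n+1}$, $O_{2n}$, $Sp_{2n}$ the diagonal factor is parametrised by $(\delta_{-n},\dots,\delta_{-1})$ subject to the constraint $\delta_i=\delta_{-i}^{-1}$. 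Conversely, every $\tilde{f}$ on $Z$ extends uniquely to a function on $G$ by this formula, yielding the required bijection.

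The second step is to verify that under this bijection the right action of $g_n$ on $f$ intertwines with the action on $\tilde{f}$ described in Section~\ref{fugu}; this is a direct computation with the Gauss decomposition of $gX$ for $X\in G$, exactly as in \cite{zh2}. Consequently condition~3 on $f$ coincides with the indicator system on $\tilde{f}$, and Section~\ref{funaz} then isolates the irreducible representation of highest weight $[m]$. It remains to check that the explicit highest vector \eqref{stv} satisfies the three conditions: each minor $a_{-n,\dots,-k}$ is annihilated by all lower-triangular generators of $g_n$ and transforms with the expected weight under the $F_{-i,-i}$, so the product \eqref{stv} satisfies 1 and 2, while the indicator relations reduce to finite-order vanishing identities whose derivation follows from \eqref{edet1} combined with the definitions \eqref{fbd}, \eqref{fd}.

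The main obstacle is expected to be the half-integer highest weight case of series $B$ and $D$, where $v_0$ contains the nonpolynomial factor $a_{-n,\dots,-1}^{m_{-1}}$ or $\bar{a}_{-n,\dots,1}^{-m_{-1}}$: one must argue that this factor and the corresponding $\delta_{-1}^{m_{-1}}$ extend consistently on the relevant double cover of $G$ so that the above bijection remains well-defined. A secondary subtlety is the unusual position of the short simple root in the indicator system for series $D$ (namely $L_{-2,1}$ rather than $L_{-1,1}$), which must be matched with the choice \eqref{stv} and the exponents \eqref{rb}; this is dictated by the nonstandard Borel implicit in the formula for $v_0$.
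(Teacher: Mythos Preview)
Your overall strategy coincides with the paper's: reduce to the characterization on $Z$ via the Gauss decomposition and the result of Section~\ref{funaz}. The paper packages the bijection you set up as Proposition~\ref{claizh}, quoted from \cite{zh2}. Two points in your outline, however, need sharpening.

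First, the assertion that condition~3 ``transfers verbatim'' across the bijection is only half justified. If $f$ satisfies 1--3 on $G$ then, evaluating at $z\in Z$ and using $e^{tF_{-i,-i+1}}z\in Z$, one gets $L_{-i,-i+1}^{r_{-i}+1}\tilde f(z)=L_{-i,-i+1}^{r_{-i}+1}f(z)=0$, so $\tilde f$ satisfies the indicator system. The converse is not automatic: for a general $g=\zeta\delta z$ the left translate $e^{tF_{-i,-i+1}}g$ needs a new Gauss decomposition depending on $\zeta$, and vanishing on $Z$ alone does not force vanishing on $G$. Accordingly the paper argues the two inclusions asymmetrically: $\{\text{1,2,3}\}\subset\text{irrep}$ uses the easy direction of the transfer together with Proposition~\ref{claizh}, while the reverse inclusion is obtained by checking directly that $v_0$ satisfies condition~3 and then invoking the commutation of left and right shifts. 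Your final step (checking $v_0$) is thus not a supplement but the heart of one of the two inclusions; the ``right action intertwines'' observation, by contrast, does not bear on condition~3, which involves left shifts.

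Second, that check on $v_0$ is where almost all the work lies, and it does not follow from \eqref{edet1}, which is the \emph{right}-shift formula; one needs the left-shift analogue \eqref{lij} acting on row indices. For $L_{-i,-i+1}^{r_{-i}+1}v_0=0$ with $i\ge 2$ this is indeed routine, but the last indicator equation requires a separate argument in each series. For $B$ with half-integer weight one must show $L_{-1,0}^{2}\,a_{-n,\dots,-1}^{1/2}=0$, which rests on the quadratic identity $a^{-n,\dots,-2,1}_{-n,\dots,-1}\,a^{-n,\dots,-2,-1}_{-n,\dots,-1}=-\tfrac12\,(a^{-n,\dots,-2,0}_{-n,\dots,-1})^{2}$; for $D$ the operator $L_{-2,1}$ (respectively $L_{-2,-1}$ when $m_{-1}<0$) acts on both the order-$(n{-}1)$ and the order-$n$ factors of $v_0$, and the exponent $m_{-2}+|m_{-1}|$ in \eqref{rb} emerges only after a careful count. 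Your anticipation of the half-integer obstacle is correct, but the paper resolves it through these determinant identities rather than by passing to a covering group.
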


\proof

The scheme of the proof is the following. Firstly we derive  formulas for the action of a left infinitisimal shift. Using them we prove the the highest vector satisfies the conditions 1-3. The main difficulty is to prove that the highest vector satisfies the indicator system.  Then we easily prove that an arbitrary vector of the representation satisfies the conditions 1-3. Secondly we prove that among functions that satisfy conditions 1-3 there is nothing but functions that form a representation with the highest vector  \eqref{stv}.

Take a determinant
$$
a_{i_1,...,i_k}=det(a_i^j)_{i=i_1,...,i_k}^{j=-n,...,-n+k-1}
$$

introduce a notation

$$
a^{-n,...,-n+k-1}_{i_1,...,i_k},
$$
 where upper indices are row indices. Then the operator  $L_{-i,-j}$ of the left infinitesimal shift acts on the upper indices  $-n,...,-n+k-1$ by the following. For the series   $A$  the left infinitesimal shift  by  $E_{-i,-j}$  act as follows

\begin{equation}
\label{lij}
L_{-i,-j}a^{-n,...,-n+k-1}_{i_1,...,i_k}=a^{\{-n,...,-n+k-1\}\mid_{-i\mapsto -j}}_{i_1,...,i_k},
\end{equation}

and the left infinitesimal shift  by   $F_{-i,-j}$ for the series $B$,
$C$, $D$ is expressed though these operators.

Analogously

\begin{equation}
\label{fii}
L_{-i,-i}a^{-n,...,-n+k-1}_{i_1,...,i_k}=
a^{\{-n,...,-n+k-1\}}_{i_1,...,i_k}\text{ if }
-i\in\{-n,...,-n+k-1\},
 \text{ $0$ otherwise }.
\end{equation}

On a product of determinants  $L_{-i,-i}, L_{-i,-j}$ act according to the Leibnitz
ruler.

From these formulas we see that the conditions  1-3 for an irreducible representations with the highest vector defined in Section  \ref{stvv} do hold.

First we prove that conditions 1-3  hold for the highest vector. The
fact that conditions 1 and 2 hold is proved by very easy direct
computations. We need to prove that the condition 3 holds.

 From the formula  \eqref{lij} it follows that  operators $L_{-i,-i+1}$ for $i=-n,...,-3$ for all  series  act onto
    \eqref{stv}  by acting only onto
    $a_{-n,...,-i}^{m_{-i}-m_{-i+1}}$. Also
   $L_{-2,-1}$ for the series $A$, $B$, $C$, and $D$ in the case  $m_{-1}\geq 0$,  $L_{-2,1}$
    for the series  $D$ in the case  $m_{-1}<0$  act onto    \eqref{stv}  by acting only onto $a_{-n,...,-2}^{m_{-2}-m_{-1}}$.
    The operator   $L_{-1,1}$ for the series  $A$, $C$,  $L_{-1,0}$ for the series  $B$  acts on to a determinant of order $n$.

    But there are also special operators.
    For the series  $D$   and  $m_{-1}\geq 0$  the operator $L_{-2,1}$ acts onto determinants of orders $n-1$ and  $n$.
    And in the case  $m_{-1}< 0$ so does the operator  $L_{-2,-1}$.

  Since the power $r_{-i}=m_{-i}-m_{-i+1}$ of the determinant  $a_{-n,...,-n+i-1}$ in  \eqref{stv}  is an integer then due to  \eqref{lij} the equations   $L_{-i,-i+1}^{r_{-i}+1}v_0=0$, $i=-n,...,-2$   hold.  For the same reason the equation $L_{-1,1}^{r_{-1}+1}v_0=0$ for the series $A$, $C$ hold.

 Let us check that $L_{-1,0}^{m_{-1}+1}v_0=0$ for the series  $B$. For the series  $B$ one has

 \begin{align*}
 &L_{-1,0}a_{-n,...,-2,-1}^{-n,...,-2,-1}=a_{-n,...,-2,-1}^{-n,...,-2,0},\,\,\,L_{-1,0}^2a_{-n,...,-2,-1}^{-n,...,-2,-1}=-a_{-n,...,-2,-1}^{-n,...,-2,1},\\&L_{-1,0}^3a_{-n,...,-2,-1}^{-n,...,-2,-1}=0
 \end{align*}

From these formulas we obtain that $L_{-1,0}^{2m_{-1}+1}v_0=0$ for the series  $B$ and an integer highest weight.

One also has

 \begin{align*}
 &L_{-1,0}(a_{-n,...,-2,-1}^{-n,...,-2,-1})^{1/2}=\frac{1}{2}a_{-n,...,-2,-1}^{-n,...,-2,0}(a_{-n,...,-2,-1}^{-n,...,-2,-1})^{-1/2},\\
 &L_{-1,0}^2a_{-n,...,-2,-1}^{-n,...,-2,-1}=-\frac{1}{2}a_{-n,...,-2,-1}^{-n,...,-2,1}(a_{-n,...,-2,-1}^{-n,...,-2,-1})^{-1/2}-\\&-
 \frac{1}{2}(a_{-n,...,-2,-1}^{-n,...,-2,0})^2(a_{-n,...,-2,-1}^{-n,...,-2,-1})^{-3/2}=0
 \end{align*}

To prove these formulas one must use the relation $$a_{-n,...,-2,-1}^{-n,...,-2,1}a_{-n,...,-2,-1}^{-n,...,-2,-1}=-\frac{1}{2}(a_{-n,...,-2,-1}^{-n,...,-2,0})^2.$$ This relation can be derived by analogy with Lemma \ref{ssp4} below.  From these formulas we conclude that  $L_{-1,0}^{2m_{-1}+1}v_0=0$ for the series  $B$ and a  half-integer highest weight.

Now consider the case of the series  $D$ and  $m_{-1}\geq 0$.  Let us check that $L_{-2,1}^{m_{-2}+m_{-1}+1}v_0=0$.   One has

\begin{align*}
&L_{-2,1}a_{-n,...,-2}^{-n,...,-2}=a_{-n,...,-2}^{-n,...,1},\,\,\, L_{-2,1}a_{-n,...,-2}^{-n,...,1}=0,\\
&L_{-2,1}a_{-n,...,-2,-1}^{-n,...,-2,-1}=2a_{-n,...,-2,-1}^{-n,...,1,-1},\,\,\,
L_{-2,1}a_{-n,...,-2,-1}^{-n,...,1,-1}=a_{-n,...,-2,-1}^{-n,...,1,2},\,\,\,
L_{-2,1}a_{-n,...,-2,-1}^{-n,...,1,2}=0.
\end{align*}

Thus $v_0$ vanishes under the action of   $L_{-2,1}$ to the power
which equals one plus the power of  $a_{-n,...,-2}$  and the double
power of   $a_{-n,...,-2,-1}$,  that is $v_0$ vanishes under the
action of  $L_{-2,1}$ to the power
$1+(m_{-2}-m_{-1})+2m_{-1}=1+m_{-2}+m_{-1}$.

For the series  $D$ and  $m_{-1}< 0$.  Let us check that $L_{-2,-1}^{m_{-2}+m_{-1}+1}v_0=0$. Оne has

\begin{align*}
&L_{-2,-1}a_{-n,...,-2}^{-n,...,-2}=a_{-n,...,-2}^{-n,...,-1},\,\,\, L_{-2,-1}a_{-n,...,-2}^{-n,...,-1}=0,\\
&L_{-2,-1}a_{-n,...,-2,1}^{-n,...,-2,1}=2a_{-n,...,-2,-1}^{-n,...,-1,1},\,\,\,
L_{-2,-1}a_{-n,...,-2,-1}^{-n,...,-1,1}=-a_{-n,...,-2,-1}^{-n,...,-1,2},\,\,\,
L_{-2,1}a_{-n,...,-2,-1}^{-n,...,-1,2}=0.
\end{align*}

Thus  $v_0$ vanishes under the action of   $L_{-2,-1}$ to the power
$1+(m_{-2}-m_{-1})+2m_{-1}=1+m_{-2}+m_{-1}$.

Thus the highest vector satisfies the indicator system.

The fact that the conditions  1-3 hold for an arbitrary vector of the representation follows form the fact that  left and right shifts commute. And an arbitrary vector of the representation is a linear combination of right shifts of the highest vector.

Thus we need to check that among functions that satisfy the conditions 1-3  there are no other functions.

Let us use the following statement form \cite{zh2}.

\begin{prop}\label{claizh} If one restricts form  $G$ to  $Z$ functions that form an irreducible representation with the highest vector  \eqref{stv} one obtains a bijection with the space of functions on  $Z$ that from an irreducible representation with the same highest weight. These function on   $Z$  are selected by conditions from Section  \ref{funaz}.
\end{prop}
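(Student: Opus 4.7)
The plan is to exploit the Gauss decomposition \eqref{gaus} to produce an explicit $G$-equivariant bijection between the two realizations. Let $V_G$ be the space of functions on $G$ satisfying conditions 1--3 of Theorem \ref{tzh}, and $V_Z$ the space of functions on $Z$ satisfying the indicator system of Section \ref{funaz}, with $G$ acting on $V_Z$ by the formula in Section \ref{fugu}. I would define $\Phi\colon V_G\to V_Z$ by $\Phi(f):=f|_Z$. Its $G$-equivariance is essentially built in: if $zg=\tilde\zeta\tilde\delta\tilde z$ is the Gauss decomposition, then conditions 1 and 2 applied to $f$ force $f(zg)=\tilde\delta_{-n}^{m_{-n}}\cdots\tilde\delta_{-1}^{m_{-1}}f(\tilde z)$, which matches the prescribed action $(gf)(z)$ on $V_Z$.

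For injectivity I would apply conditions 1 and 2 on the open Gauss cell to obtain $f(\zeta\delta z)=\delta_{-n}^{m_{-n}}\cdots\delta_{-1}^{m_{-1}}f(z)$, so $f$ is determined on this dense open set, and hence everywhere, by $f|_Z$. That $f|_Z$ actually lies in $V_Z$ follows because the operators $L_{-i,-j}$ are left-invariant vector fields and therefore commute with restriction to $Z$, converting condition 3 on $G$ into the indicator system on $Z$. For surjectivity, given $h\in V_Z$ I would set $\tilde f(\zeta\delta z):=\delta_{-n}^{m_{-n}}\cdots\delta_{-1}^{m_{-1}}h(z)$ on the Gauss cell; conditions 1 and 2 are automatic from the definition, and the same left-invariance argument recovers condition 3. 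Irreducibility then transfers from $V_Z$, which is known from \cite{zh2} to be irreducible with highest weight $[m_{-n},\ldots,m_{-1}]$, to $V_G$ through the intertwiner $\Phi$, with $\Phi(v_0)$ serving as a highest vector of the prescribed weight.

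The main obstacle is the extension of $\tilde f$ from the open Gauss cell to all of $G$ with exactly the analytic type dictated by \eqref{stv}. The factors $\tilde\delta_{-i}$ are rational in the matrix entries, so a priori $\tilde f$ has poles; one must show these poles cancel against zeroes supplied by $h$. The key observation is that the $\tilde\delta_{-i}$ can be written as ratios of the very determinants $a_{-n,\ldots,-n+i-1}$ appearing in \eqref{stv}, and the indicator system on $Z$ forces $h$ to be polynomial with degrees matching the exponents $m_{-i}-m_{-i+1}$, so the product $\prod\tilde\delta_{-i}^{m_{-i}-m_{-i+1}}\cdot h$ extends to a polynomial function on $G$ — or, for half-integer weights in series $B$, $D$, to a polynomial times the appropriate half-integer power of $a_{-n,\ldots,-1}$, matching the form of $v_0$. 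Once this analytic extension is in hand, $V_G$ and $V_Z$ are identified as $G$-modules, completing the proof.
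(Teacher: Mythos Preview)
The paper does not actually prove Proposition~\ref{claizh}; it is quoted from \cite{zh2} and used as a black box inside the proof of Theorem~\ref{tzh}. So there is no ``paper's own proof'' to compare with, and your outline is a genuine attempt to supply one.

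Two issues, one structural and one technical.

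\textbf{Structural.} You take $V_G$ to be the space of functions on $G$ satisfying conditions 1--3 of Theorem~\ref{tzh}. But the proposition, as stated, is about the representation \emph{generated by the highest vector} $v_0$; the equality of these two spaces is precisely the content of Theorem~\ref{tzh}, and the proposition is invoked in its proof. With your choice of $V_G$ the argument is circular unless you intend your proof to establish Theorem~\ref{tzh} and the proposition simultaneously. That is a legitimate strategy, but you should say so explicitly.

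\textbf{Technical.} The surjectivity step has a real gap. You claim that for $\tilde f(\zeta\delta z):=\delta^{m}h(z)$ ``the same left-invariance argument recovers condition~3''. It does not. The operators $L_{-i,-i+1}$ are left shifts by \emph{positive}-root elements, while conditions 1--2 express covariance under left shifts by the \emph{opposite} Borel $B^-$. These do not commute, so $L_{-i,-i+1}^{r_{-i}+1}\tilde f$ need not satisfy conditions 1--2, and hence knowing that it vanishes on $Z$ does not force it to vanish on the big cell. Your extension paragraph about matching poles of $\tilde\delta_{-i}$ against zeros of $h$ addresses regularity of $\tilde f$, not the indicator system.

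A cleaner route avoids this entirely. Keep $V_G$ as the cyclic module generated by $v_0$, and keep your restriction map $\Phi$. Your equivariance and injectivity arguments go through verbatim (using only that vectors of $V_G$ satisfy conditions 1--2, which is established in the paper \emph{before} the proposition is invoked). Then observe that $\Phi(v_0)=v_0|_Z=1$, the highest vector of $V_Z$; since $V_Z$ is irreducible (this is the input from \cite{zh2}), $\Phi(V_G)$ is a nonzero submodule and hence all of $V_Z$. Surjectivity follows without ever constructing an explicit inverse or worrying about extension across the complement of the big cell.
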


So let us be given a function on  $G$, that satisfy the conditions 1-3.  It's restriction on $Z$ satisfies the condition 1 from the Section \ref{funaz}. Thus this restriction belongs to the irreducible representation in the realization in functions on $Z$.  Using the statement above we conclude that the initial function on
   $G$ belongs to an irreducible representation with a highest vector \eqref{stv}.

\endproof
\subsubsection{Solutions of the indicator system and the equations $L_{-i,-i}f=m_{-i}f$.}

\label{islm}

In the proof of Theorem  \ref{tzh} the formulas for the action of  $L_{-i,-i}$ were derived. They give us the following statement.

 \begin{lem}
 \label{lfmf}
  Solutions of  the system of equations $L_{-i,-i}f=m_{-i}f$ that are functions of determinants are described as follows. If one expands the function $f$ into a sum
   (maybe infinite) of products of determinants then in each summand the
  sum of powers of determinants of order  $n-i+1$ equals
   $r_{-i}$ for $i=n,...,2$.  Also a sum  of powers of determinants of order  $n$ equals $m_{-1}$ in the case $m_{-1}\geq 0$ and
   $-m_{-1}$ in the case  $m_{-1}<0$.

\end{lem}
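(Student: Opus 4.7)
The plan is to read the lemma directly off from the single-determinant action formula \eqref{fii}, combined with the Leibniz rule and a joint-eigenspace decomposition.

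First I would note that, by \eqref{fii}, on a determinant of order $k$ the operator $L_{-i,-i}$ (left shift by $E_{-i,-i}$ in series $A$, and by $F_{-i,-i}=E_{-i,-i}-E_{i,i}$ in series $B$, $C$, $D$) acts as the identity exactly when $-i$ lies in the fixed row set $\{-n,\ldots,-n+k-1\}$, that is, when $k\geq n-i+1$, and as zero otherwise. Applying Leibniz, any monomial $\mu$ in determinants is then a simultaneous eigenvector of the commuting operators $L_{-1,-1},\ldots,L_{-n,-n}$, with eigenvalue
\[
d_i(\mu)\;:=\;\sum_{k\geq n-i+1}\bigl(\text{total exponent of order-}k\text{ factors in }\mu\bigr).
\]

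Next I would group any given expansion $f=\sum_\mu c_\mu \mu$ by the joint eigenvalue vector. The resulting pieces lie in distinct eigenspaces of commuting diagonalizable operators and are therefore linearly independent, so the system $L_{-i,-i}f=m_{-i}f$ forces $f$ to concentrate on the single eigenvalue vector $(m_{-1},\ldots,m_{-n})$. Taking consecutive differences $d_i(\mu)-d_{i-1}(\mu)$ in each surviving summand extracts the total exponent of determinants of order exactly $n-i+1$, yielding $r_{-i}=m_{-i}-m_{-i+1}$ for $i=2,\ldots,n$; the top equation $d_1(\mu)=m_{-1}$ gives the order-$n$ statement when $m_{-1}\geq 0$.

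The only point that genuinely needs care is the series $D$ subcase with $m_{-1}<0$, where $F_{-1,-1}=E_{-1,-1}-E_{1,1}$ forces one to also record the action of $L_{1,1}$: this is the identity on $\bar a$-determinants (whose row set contains $1$) and zero on ordinary order-$n$ determinants (whose row set does not). Since the right action of $\mathfrak g$ preserves row indices on determinants by \eqref{edet1}, and the highest vector \eqref{stv} in this subcase uses only $\bar a$-determinants at order $n$, the whole representation inherits this property; the eigenvalue equation then becomes the claimed identity that the total exponent of order-$n$ $\bar a$-determinants equals $-m_{-1}$. I do not foresee any real obstacle beyond this sign-bookkeeping in the $D$-case.
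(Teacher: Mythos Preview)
Your proposal is correct and follows essentially the same approach as the paper: the paper's proof is the single sentence ``the formulas for the action of $L_{-i,-i}$ [i.e.\ \eqref{fii}] give us the following statement,'' and your argument is precisely a fleshed-out version of this, reading off the eigenvalue on a monomial via \eqref{fii} plus Leibniz and then decomposing into joint eigenspaces. Your handling of the series $D$, $m_{-1}<0$ subcase (tracking the $E_{1,1}$-contribution on $\bar a$-determinants and invoking that the representation only involves $\bar a$-type order-$n$ factors) is in fact more explicit than the paper, which leaves this bookkeeping entirely implicit.
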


Let us find conditions that select functions that satisfy the indicator system.

\begin{lem}
\label{l1} If the case of an integer
 non-negative highest weight the solution of the indicator system are polynomials that satisfy the condition of Lemma  \ref{lfmf}.
\end{lem}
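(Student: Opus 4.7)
The plan is to use the explicit action \eqref{lij}--\eqref{fii} of the operators $L_{-i,-j}$ on determinants together with Proposition~\ref{claizh} and the Leibniz rule to identify the polynomial solutions of the indicator system.

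For the ``sufficient'' direction, I would let $f$ be a polynomial written as a sum of monomials in the determinants $a_{i_1,\ldots,i_k}$ in which the sum of exponents of determinants of order $n-i+1$ equals $r_{-i}$, as prescribed by Lemma~\ref{lfmf}. By \eqref{lij}, the operator $L_{-i,-i+1}$ acts on a single determinant of order $n-i+1$ by substituting $-i\mapsto -i+1$ in the upper indices; a second application gives a repeated row index and hence vanishes. Thus $L_{-i,-i+1}^2$ annihilates any such determinant, and applying $L_{-i,-i+1}^{r_{-i}+1}$ to a monomial with $r_{-i}$ factors of that order via the Leibniz rule forces at least two of the $r_{-i}+1$ operators to act on the same factor, killing every term. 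The ``boundary'' operators $L_{-1,1}$, $L_{-1,0}$, $L_{-2,1}$ and $L_{-2,-1}$ are handled by the same counting together with the explicit computations already carried out in the proof of Theorem~\ref{tzh}.

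For the converse I would argue via Proposition~\ref{claizh}. The restriction of a solution $f$ of the indicator system to $Z$ again solves the indicator system on $Z$, since the operators $L_{-i,-j}$ are left infinitesimal shifts and $Z$ is invariant under the relevant shifts appearing in the Gauss decomposition \eqref{gaus}. By Proposition~\ref{claizh} this restriction belongs to the irreducible representation of highest weight $[m]$ realized on $Z$, and under the integer non-negative weight hypothesis the functions on $Z$ forming this representation are polynomial in the coordinates on $Z$. The bijection of Proposition~\ref{claizh} then forces $f$ itself to be polynomial in matrix elements on $G$. I would next decompose $f$ into joint eigenfunctions of the commuting Cartan operators $L_{-i,-i}$; by \eqref{fii} the eigenvalue on a monomial equals the sum of exponents of determinants of the corresponding order, so coupling the indicator equations with the eigenvalues $m_{-i}$ fixed by the weight of the representation yields precisely the condition of Lemma~\ref{lfmf}.

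The main obstacle will be establishing the polynomial character of arbitrary solutions of the indicator system without presupposing membership in the irreducible representation, since a bare PDE of the form $L^{r+1}f=0$ has many analytic or rational solutions. Here the integer non-negative hypothesis is decisive: it makes the highest vector \eqref{stv} polynomial, hence the entire representation generated from $v_0$ by right shifts consists of polynomial functions on $G$, and via Proposition~\ref{claizh} this leaves no room for non-polynomial competitors. Once polynomiality is secured, the Leibniz counting and the weight eigenvalue bookkeeping are routine and close the proof.
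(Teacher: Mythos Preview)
Your proposal is correct and ultimately rests on the same two ideas as the paper: (i) for the sufficient direction, the Leibniz rule together with \eqref{lij} shows that a polynomial in determinants with the degree constraints of Lemma~\ref{lfmf} is annihilated by each $L_{-i,-i+1}^{r_{-i}+1}$; (ii) for the converse, the integer non-negative hypothesis makes $v_0$ a polynomial in determinants, and since right shifts preserve this property every vector of the representation---hence every solution of the indicator system together with the Cartan condition---is polynomial. The paper's own proof is just these two sentences; your excursion through Proposition~\ref{claizh} and the restriction to $Z$ is an unnecessary detour, since the polynomiality already follows directly from the fact that the solution lies in the representation generated from the polynomial $v_0$ (which you yourself note in your final paragraph).
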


\begin{proof}
Since the highest vector   \eqref{stv} is a polynomial in determinants and the
 space of such function is invariant under the  right action of algebra, then an
  arbitrary vector can be represented as a polynomial in determinants. Then the statement of Lemma follows immediately from  \eqref{lij}.
\end{proof}

\begin{lem}
\label{l2} If the highest weight is half-integer then among functions that satisfy
\ref{lfmf} solutions of the indicator system are functions of type

\begin{align}
\begin{split}
\label{f12}
&f=(a_{-n,...,-2,-1})^{\frac{1}{2}}f_1+a_{-n,...,-2,0}(a_{-n,...,-2,-1})^{-\frac{1}{2}}f_2\text{ for the series  $B$}\\
&f=(a_{-n,...,-2,-1})^{\frac{1}{2}}f_1+a_{-n,...,-1,1}(a_{-n,...,-2,-1})^{-\frac{1}{2}}f_2\text{
for the series  $D$ and  $m_{-1}\geq 0$},\\
&f=(\bar{a}_{-n,...,-2,1})^{\frac{1}{2}}f_1+\bar{a}_{-n,...,-1,1}(\bar{a}_{-n,...,-2,1})^{-\frac{1}{2}}f_2\text{
for the series  $D$ and $m_{-1}< 0$}.
\end{split}
\end{align}
where  $f_1$ and   $f_2$ are polynomials in determinants.

\end{lem}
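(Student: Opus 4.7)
I would approach Lemma \ref{l2} by showing that every vector of the irreducible representation lies in the ansatz \eqref{f12}. By Theorem \ref{tzh}, the space of solutions of the indicator system satisfying the weight conditions of Lemma \ref{lfmf} coincides with the irreducible $g_n$-representation of highest weight $[m]$, which is generated from the highest vector \eqref{stv} by right shifts. In the half-integer case this highest vector takes the shape $v_0=P\,\omega^{1/2}$, where $P$ is a polynomial in the lower-rank determinants and
\[
\omega=\begin{cases} a_{-n,\ldots,-2,-1} & \text{for $B$ and for $D$ with $m_{-1}\geq 0$,}\\ \bar a_{-n,\ldots,-2,1} & \text{for $D$ with $m_{-1}<0$;}\end{cases}
\]
hence $v_0$ is already of the form \eqref{f12} with $f_2=0$. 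It therefore suffices to prove that the class of functions of the form \eqref{f12} is closed under the right action of $g_n$.

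For any generator $X\in g_n$ the Leibnitz rule gives $X(\omega^{1/2}f_1)=\omega^{1/2}X(f_1)+\tfrac{1}{2}\omega^{-1/2}X(\omega)f_1$, and a similar expansion for the second summand of \eqref{f12} produces combinations of $\omega^{-1/2}$ and $\omega^{-3/2}$. To reorganize these pieces back into the ansatz one uses (i) the bilinear identity
\[
a^{-n,\ldots,-2,1}_{-n,\ldots,-2,-1}\cdot a^{-n,\ldots,-2,-1}_{-n,\ldots,-2,-1}=-\tfrac{1}{2}\bigl(a^{-n,\ldots,-2,0}_{-n,\ldots,-2,-1}\bigr)^2
\]
used already in the proof of Theorem \ref{tzh}, which says $(a_{-n,\ldots,-2,0})^2\equiv 0\pmod{\omega}$ and converts any $\omega^{-3/2}$-term carrying an $a_{-n,\ldots,-2,0}^2$-factor into an $\omega^{-1/2}$-term; and (ii) the Pl\"ucker-Jacobi type identities among the minors of $g\in G$ implied by the defining equation $A^TJA=J$ of the group, which allow one to rewrite $X(\omega)$ modulo the ideal of $G$ as a polynomial multiple of $a_{-n,\ldots,-2,0}$, putting the newly produced $\omega^{-1/2}$-term into the shape of the second summand of \eqref{f12}.

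The main obstacle is step (ii): verifying, for each of the generators $X\in g_n$ that do not annihilate $\omega$, that the reduction of $X(\omega)$ modulo the orthogonal (resp.\ symplectic) relations gives the required divisibility by $a_{-n,\ldots,-2,0}$. For series $D$ with $m_{-1}<0$ the argument is parallel, with $\bar a_{-n,\ldots,-2,1}$ replacing $\omega$ and $\bar a_{-n,\ldots,-1,1}$ replacing $a_{-n,\ldots,-2,0}$, using the analogous bilinear identity on barred minors. Once the closure property is in place, the fact that $v_0$ has the form \eqref{f12} propagates under the $g_n$-action to every vector of the representation, and the lemma follows.
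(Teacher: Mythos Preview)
Your argument covers only one direction. The lemma asserts that among functions satisfying Lemma~\ref{lfmf}, the solutions of the indicator system are \emph{exactly} those of the form~\eqref{f12}; this is how the paper uses it immediately afterwards, defining ``admissible functions of determinants'' as those of type~\eqref{f12} and identifying them with the full irreducible representation. You show that every vector of the representation has the form~\eqref{f12}, but not the converse: that every function of the form~\eqref{f12} obeying the degree constraints of Lemma~\ref{lfmf} actually satisfies the indicator system. The paper supplies this converse directly. The equations $L_{-i,-i+1}^{r_{-i}+1}f=0$ for $i\geq 2$ hold because these left shifts touch only determinants of order $<n$, which sit inside the polynomials $f_1,f_2$ with the prescribed total degrees. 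For the remaining equation $L_{-1,0}^{2m_{-1}+1}f=0$ (series $B$) one expands by the Leibnitz rule, distributing the $2[m_{-1}]+2$ copies of $L_{-1,0}$ between the half-integer prefactor (a spin-representation vector, killed by $L_{-1,0}^{2}$) and $f_1$ or $f_2$ (killed by $L_{-1,0}^{2[m_{-1}]+1}$).

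For the direction you do address, the paper sidesteps your step~(ii) entirely. Rather than verifying, for every generator $X$, that $X(\omega)$ reduces to a polynomial multiple of $a_{-n,\ldots,-2,0}$ modulo the group relations, the paper spans the representation by ordered monomials $F_{0,-1}^{p_{-1}}F_{-1,-2}^{p_{-2}}\cdots F_{-n+1,-n}^{p_{-n}}v_0$ in the \emph{simple-root} lowering operators. Each $F_{-i+1,-i}$ with $i\geq 2$ annihilates $\omega$, so these factors act purely on the polynomial part $v_0'$ of $v_0=v_0'\,\omega^{1/2}$. Only the outermost operator $F_{0,-1}$ touches $\omega^{1/2}$, and one computes $F_{0,-1}\omega^{1/2}=\tfrac{1}{2}a_{-n,\ldots,-2,0}\,\omega^{-1/2}$ and $F_{0,-1}^{2}\omega^{1/2}=0$ from the single bilinear identity you quote. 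Thus at most one copy of $F_{0,-1}$ lands on $\omega^{1/2}$, and the result is visibly of type~\eqref{f12}; no reduction modulo the orthogonal ideal for general $X$ is needed. For series $D$ the same scheme works with $F_{1,-2}$ in place of $F_{0,-1}$.
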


\begin{proof}
Consider the case of the series  $B$.

 Let us first prove that a solution of the indicator system looks as \eqref{f12}.
 A function of determinants satisfies automatically the condition 1 of Theorem \ref{tzh}, so our function
  is a vector of a representation with the highest  vector \eqref{stv}.

Consider first the case of the highest vector   $(a_{-n,...,-2,-1})^{\frac{1}{2}}$.  An arbitrary element of the representation is represented as a linear combination of

$$
F_{0,-1}^{p_{-1}}F_{-1,-2}^{p_{-2}}...F_{-n+1,-n}^{p_{-n}}(a_{-n,...,-2,-1})^{\frac{1}{2}},
$$

 this vector is non-zero only in the case
$p_{-n}=...=p_{-2}=0$,  and $p_{-1}=0$ or  $1$. Indeed for
$p_{-1}=1$ we get the vector
$\frac{1}{2}a_{-n,...,-2,0}(a_{-n,...,-2,-1})^{-\frac{1}{2}}$.
 And for $p_{-1}=2$ we obtain

\begin{align*}
&-\frac{1}{2}a_{-n,...,-2,1}(a_{-n,...,-2,-1})^{-\frac{1}{2}}-\frac{1}{4}(a_{-n,...,-2,0})^2(a_{-n,...,-2,-1})^{-\frac{3}{2}}=\\
&=-\frac{1}{2}(a_{-n,...,-2,-1})^{-\frac{3}{2}}(a_{-n,...,-2,1}a_{-n,...,-2,-1}+\frac{1}{2}(a_{-n,...,-2,0})^2)=0,
\end{align*}

where we used relation form Lemma \ref{soot2} below.

Now consider the case of an arbitrary highest vector.  It can be represented as follows

$$
v_0=v'_0(a_{-n,...,-2,-1})^{\frac{1}{2}},
$$

where  $v'_0$ is a polynomial in determinants.  An arbitrary vector  $f$
 is a linear combination of  $
F_{0,-1}^{p_{-1}}F_{-1,-2}^{p_{-2}}...F_{-n+1,-n}^{p_{-n}}v_0$. This vector is of type
\eqref{f12}.

Now we must prove that every vector of type \eqref{f12},
which satisfies the condition of Lemma  \ref{lfmf} is a solution of the indicator system.

Operators $L_{-i,-i+1}$, $i=n,...,2$ act onto determinants of orders $n-i+1$ that is onto determinants of orders
$1,...,n-1$. Such determinants occur in $f_1$, $f_2$, in particular
they occur in non-negative integer powers, the sum of  powers of
determinants of order $i$ equals to $r_{-n+i-1}$ since Lemma
\ref{lfmf} holds. Thus conditions $L_{-i,-i+1}^{r_{-i}+1}f=0 $ for
$i=n,...,2$ hold.

Now consider the equation
$L_{-1,0}^{2m_{-1}+1}f=L_{-1,0}^{2[m_{-1}]+1+1}f=0$,  where
$[m_{-1}]$ is an integer part. The operator  $L_{-1,0}^{2[m_{-1}]+1+1}$ acts according to the Leibnitz ruler onto each summand in
\eqref{f12} as follows.

 Either $L_{-1,0}^{2[m_{-1}]+1+1}$ acts onto the second factor  $f_1$ or $f_2$. Then we obtain  $0$, since the sum of powers of determinants of order $n$ in  $f_1$ and   $f_2$  equals  $[m_{-1}]$, such functions are annihilated by $L_{-1,0}^{2[m_{-1}]+1}$.

 Either  $L_{-1,0}^{2[m_{-1}]+1}$  acts onto the second factor  $f_1$ or $f_2$,  and
   $L_{-1,0}$  acts onto the first factor. We obtain  $0$  by the same reason.

 Either $L_{-1,0}^{2[m_{-1}]+2-k}$   acts onto the second factor, and $L_{-1,0}^{k}$ acts onto the first factor, where $k\geq 2$.But the first factor is a vector of a representation with the highest weight $[\frac{1}{2},\frac{1}{2}]$,  thus it is annihilated by $L_{-1,0}^{2}$.

Thus  \eqref{f12} vanishes under the action of
$L_{-1,0}^{2m_{-1}+1}$. In the case of the series   $B$ the Lemma is proved.

Now consider the case of the series  $D$. Let  $m_{-1}\geq 0$.

Suppose that the highest vector is
$(a_{-n,...,-2,-1})^{\frac{1}{2}}$. The an arbitrary vector of the
representation is a linear combination of vectors of type
$F_{1,-2}^{p_{-1}}F_{-1,-2}^{p_{-2}}...F_{-n+1,-n}^{p_{-n}}(a_{-n,...,-2,-1})^{\frac{1}{2}}$.
This vector is non-zero only if  $p_{-2}=...=p_{-n}=0$ and
$p_{-1}=0$ or  $1$. Indeed  when $p_{-1}=1$ we obtain the vector
$a_{-n,...,-1,1}(a_{-n,...,-2,-1})^{-\frac{1}{2}}$. And when
$p_{-1}=2$ we obtain

\begin{align*}
&a_{-n,...,1,2}(a_{-n,...,-2,-1})^{-\frac{1}{2}}-(a_{-n,...,-1,1})^2(a_{-n,...,-2,-1})^{-\frac{3}{2}}=\\
&(a_{-n,...,-2,-1})^{-\frac{3}{2}}(a_{-n,...,1,2}a_{-n,...,-2,-1}-(a_{-n,...,-1,1})^2)=0.
\end{align*}

In the derivation of this formulas relations from Lemma  \ref{soot2} were used.  The further considerations in the case $D$ and $m_{-1}\geq 0$ are analogous to considerations in the case   $B$. The case $D$ with $m_{-1}<0$ is considered analogously.

\end{proof}

\begin{lem}
In the case of the series  $D$ for  $m_{-1}<0$ the analogues of Lemmas  \ref{l1}, \ref{l2} take place.
 But we must change the determinants of order  $n$ by the ruler
  $a_{i_1,...,i_n}\mapsto
\bar{a}_{\{i_1,...,i_n\}\mid_{-1\leftrightarrow 1}}$, where
$.\mid_{-1\leftrightarrow 1}$ is an interchange of indices  $-1$ and
$1$.
\end{lem}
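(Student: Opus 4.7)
The plan is to mirror the proofs of Lemmas \ref{l1} and \ref{l2} verbatim, exploiting the involution $-1\leftrightarrow 1$ that in the case $m_{-1}<0$ plays the role that the identity substitution plays in the case $m_{-1}\geq 0$. Under the rule $a_{i_1,\ldots,i_n}\mapsto\bar{a}_{\{i_1,\ldots,i_n\}\mid_{-1\leftrightarrow 1}}$ the highest vector \eqref{stv} becomes a product of order-$k$ $a$-determinants for $k<n$ times a power of the single order-$n$ $\bar{a}$-determinant $\bar{a}_{-n,\ldots,-2,1}$, so it has the same structural shape as in the $m_{-1}\geq 0$ case. The special indicator equation is now $L_{-2,-1}^{r_{-1}+1}f=0$ with $r_{-1}=m_{-2}+|m_{-1}|$, in place of $L_{-2,1}^{r_{-1}+1}f=0$; as the computations already displayed in the proof of Theorem \ref{tzh} show, the operator $L_{-2,-1}$ acts on the upper row indices of $\bar{a}$-determinants by the same substitution rule \eqref{lij} as $L_{-2,1}$ does on those of $a$-determinants, so all subsequent bookkeeping carries over.

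For the analog of Lemma \ref{l1} I would repeat the argument essentially word for word. The highest vector is a polynomial in the modified determinants, and the space of such polynomials is preserved by right shifts (formula \eqref{lij} applies just as well when the upper index string contains $1$ instead of $-1$), so every vector of the representation is a polynomial in determinants of orders $1,\ldots,n-1$ of type $a$ and of order $n$ of type $\bar{a}$. Lemma \ref{lfmf}, whose proof uses only the diagonal operators $L_{-i,-i}$ and formula \eqref{fii}, then pins down the degrees.

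For the analog of Lemma \ref{l2} I would first show that an arbitrary vector has the form
$$f=(\bar{a}_{-n,\ldots,-2,1})^{1/2}f_1+\bar{a}_{-n,\ldots,-1,1}(\bar{a}_{-n,\ldots,-2,1})^{-1/2}f_2,$$
by acting on the half-integer highest vector $(\bar{a}_{-n,\ldots,-2,1})^{1/2}$ with the appropriate lowering generator; as in the $m_{-1}\geq 0$ case, this generator can be applied at most once before the result vanishes, and the vanishing on the second application reduces to a quadratic Plücker-type identity among the order-$n$ $\bar{a}$-determinants, obtained from the identity used in the proof of Lemma \ref{l2} (cf.\ Lemma \ref{soot2}) by the $-1\leftrightarrow 1$ substitution. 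Conversely, any $f$ of this form satisfying Lemma \ref{lfmf} solves the indicator system: the operators $L_{-i,-i+1}$ for $i\geq 3$ only see lower-order $a$-determinants packaged in $f_1,f_2$, whose powers are correct, and for the special operator $L_{-2,-1}^{2m_{-1}+1}$ the Leibniz expansion used in the proof of Lemma \ref{l2} goes through unchanged because the half-integer prefactor generates an $\mathfrak{sl}_2$-representation of highest weight $[\tfrac12,\tfrac12]$ annihilated by $L_{-2,-1}^{2}$. The main obstacle is verifying exactly this last $\mathfrak{sl}_2$-triplet structure, i.e.\ establishing the quadratic relation for the $\bar{a}$-determinants that forces $L_{-2,-1}^{2}(\bar{a}_{-n,\ldots,-2,1})^{1/2}=0$; this is where the careful sign bookkeeping between $a$- and $\bar{a}$-determinants is needed, but it follows from the corresponding identity in the $m_{-1}\geq 0$ case by the symmetry.
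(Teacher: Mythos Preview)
Your approach is essentially the one the paper intends: the paper gives no explicit proof of this lemma, treating it as immediate by symmetry from Lemmas~\ref{l1} and~\ref{l2} (indeed, the proof of Lemma~\ref{l2} already ends with ``The case $D$ with $m_{-1}<0$ is considered analogously''). Your plan to mirror those proofs via the involution $-1\leftrightarrow 1$ on order-$n$ determinants, replacing $a$ by $\bar a$, is exactly the intended argument, and the key step you flag---that the half-integer prefactor $(\bar a_{-n,\ldots,-2,1})^{1/2}$ generates a two-dimensional module killed by the square of the relevant lowering operator, using the quadratic relation among $\bar a$-determinants from Lemma~\ref{soot2}---is the right one.

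One small correction: you write the special indicator equation as $L_{-2,-1}^{r_{-1}+1}f=0$ and later as $L_{-2,-1}^{2m_{-1}+1}$, but the indicator system for $D$ is fixed independently of the sign of $m_{-1}$: it is always $L_{-2,-1}^{r_{-2}+1}f=0$ and $L_{-2,1}^{r_{-1}+1}f=0$ with $r_{-2}=m_{-2}-|m_{-1}|$, $r_{-1}=m_{-2}+|m_{-1}|$. What changes when $m_{-1}<0$ is which of the two operators touches the order-$n$ $\bar a$-determinants (it is now $L_{-2,-1}$, as in the computations in the proof of Theorem~\ref{tzh}), not the exponent attached to it. Also, $2m_{-1}+1$ is lifted from the series-$B$ argument and is not the correct exponent here. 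This does not affect the structure of your Leibniz argument, but you should track both equations with their correct exponents when writing it out.
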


\begin{defn}
Functions that satisfy conditions of Lemmas \ref{l1}, \ref{l2} we call {\it the admissible functions of the determinants.}
\end{defn}

Since determinant and functions of them satisfy automatically the condition 1 of Theorem \ref{tzh}, we obtain the following corollary

\begin{lem}
The  admissible functions of the determinants are exactly the fucntions that form an irreducible representation with the highest vector \eqref{stv}.
\end{lem}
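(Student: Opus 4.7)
The statement is a biconditional, and I would deduce it by assembling Theorem~\ref{tzh} with the characterizations already obtained in Lemmas~\ref{lfmf}, \ref{l1}, and~\ref{l2}; essentially no new computation is needed.

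For the inclusion ``admissible $\Rightarrow$ in the representation,'' let $f$ be an admissible function of the determinants. By the remark immediately preceding the statement, any function of the determinants automatically satisfies condition~1 of Theorem~\ref{tzh}. By definition of admissibility, $f$ has the form given by Lemma~\ref{l1} (integer case) or Lemma~\ref{l2} (half-integer case); in both cases the degree bookkeeping of Lemma~\ref{lfmf} is built in, so condition~2 ($L_{-i,-i}f = m_{-i}f$) holds. The shape of $f$ is exactly the one that Lemmas~\ref{l1} and~\ref{l2} single out as a solution of the indicator system, i.e.\ condition~3. Theorem~\ref{tzh} then places $f$ in the irreducible representation with highest vector~\eqref{stv}.

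For the converse, let $v$ be any vector of that representation. Theorem~\ref{tzh} gives conditions 1--3 for $v$, in particular the indicator system and the weight equations. The remaining point is to check that $v$ is itself a function of the determinants: the highest vector~\eqref{stv} is such a function (with at most a single fractional-power determinant factor in the half-integer case), and right shifts by the generators act on each determinant via column substitution, by~\eqref{edet1} for series $A$ and by the combinations~\eqref{fbd}--\eqref{fd} for series $B$, $C$, $D$, so the class of functions of determinants is preserved. Once this is in hand, Lemma~\ref{l1} or Lemma~\ref{l2}, according to the integrality of the highest weight, identifies $v$ as admissible.

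The only step needing a moment of care is the half-integer case, where I must verify that the orbit of $(a_{-n,\ldots,-2,-1})^{1/2}$ (or its $\bar{a}$-analogue for series $D$ with $m_{-1}<0$) under the lowering generators stays inside the two-term ansatz~\eqref{f12}. This was, however, already the content of the explicit truncation computations in the proof of Lemma~\ref{l2}, where the quadratic Pl\"ucker-type relations from Lemma~\ref{soot2} force the would-be third-order terms to vanish. Thus the whole argument reduces to invoking Theorem~\ref{tzh} and Lemmas~\ref{lfmf}--\ref{l2}.
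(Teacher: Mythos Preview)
Your proposal is correct and matches the paper's own reasoning: the paper presents the lemma as an immediate corollary, noting only that functions of determinants automatically satisfy condition~1 of Theorem~\ref{tzh}, with the remaining conditions handled by Lemmas~\ref{lfmf}, \ref{l1}, and~\ref{l2}. You have simply spelled out both directions of this corollary in detail, which is exactly what the paper leaves implicit.
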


 \subsection{ $g_{n-1}$-highest vectors}


 \begin{lem}
\label{lemma1}
A vector that is highest with respect to  $g_{n-1}$
can be represented as an admissible function of determinants such
that
 it depends on determinants  $a_{i_1,...,i_k}$ that  vanish under the action of  elements corresponding to positive roots of  $g_{n-1}$.

\end{lem}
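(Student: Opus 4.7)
The plan is to start from the representation of $f$ guaranteed by Lemmas~\ref{l1} and~\ref{l2}---$f$ is an admissible function of the determinants $a_{i_1,\dots,i_k}$---and to show that it can be rewritten using only those determinants individually annihilated by the positive root operators of $g_{n-1}$. The key input will be the explicit action formulas \eqref{edet1}, \eqref{fbd}, \eqref{fd}: the positive root operators of $g_{n-1}$ act on a single determinant $a_{i_1,\dots,i_k}$ only by shifting column indices, and only among indices lying in the $g_{n-1}$-range; the row indices $-n,\dots,-n+k-1$ are preserved. Via the Leibniz rule these operators act as derivations on products, so the space of $g_{n-1}$-highest admissible functions is a subalgebra under multiplication.

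First I would identify the $g_{n-1}$-highest single determinants: they are precisely those $a_{i_1,\dots,i_k}$ whose column indices in the $g_{n-1}$-range form a configuration on which every simple positive root of $g_{n-1}$ acts trivially (because substitution via \eqref{edet1} lands on a column that is already present). Explicitly this means the columns in the $g_{n-1}$-range form an initial packed segment $\{-n,-n+1,\dots\}$, while columns outside (i.e.\ $-1$, $1$, and possibly $0$) are unconstrained. Every determinant lies in a $g_{n-1}$-orbit containing a unique such highest representative.

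Next I would decompose the space of admissible functions of determinants into $g_{n-1}$-isotypic components. Because the column sets decouple into a ``$g_{n-1}$-part'' and a ``complement part'' (indices $-1$, $1$, and $0$), each determinant factors through a tensor product of an exterior power of the standard $g_{n-1}$-module and a space on which $g_{n-1}$ acts trivially. Consequently the $g_{n-1}$-highest vector of a product of determinants with prescribed column content is realized as a product whose factors are themselves $g_{n-1}$-highest determinants. The condition $Xf=0$ for all positive roots $X$ of $g_{n-1}$ extracts exactly the highest component of each isotypic piece of $f$, yielding the desired presentation.

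The main obstacle is the half-integer case for series $B$ and $D$, where by Lemma~\ref{l2} the function $f$ carries a non-polynomial factor such as $(a_{-n,\dots,-2,-1})^{1/2}$ or $(\bar a_{-n,\dots,-2,1})^{1/2}$. I would deal with this by observing that these top-order determinants are themselves $g_{n-1}$-highest (their column sets are already maximally packed and contain all $g_{n-1}$-range indices together with $-1$ or $1$), so the square-root factor is $g_{n-1}$-highest on its own; the remaining polynomial factors $f_1,f_2$ then satisfy the hypotheses of the polynomial case and can be handled by the isotypic argument above.
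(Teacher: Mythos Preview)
Your approach differs substantially from the paper's, and the central step has a genuine gap. The paper does not use an isotypic decomposition at all; it changes models. Via Proposition~\ref{claizh} the restriction from $G$ to the unipotent subgroup $Z$ is a bijection onto the representation realized on $Z$, and by \cite{zh2} the $g_{n-1}$-highest vectors on $Z$ are exactly the polynomials in the matrix elements $z_{-k,-1},\,z_{-k,1}$. One then observes that admissible functions of the $g_{n-1}$-highest determinants (listed in Section~\ref{oprst}) restrict to all such polynomials on $Z$, and bijectivity finishes the Lemma. No analysis of how positive root operators interact with products of determinants is needed.

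The gap in your sketch is the assertion that ``the $g_{n-1}$-highest vector of a product of determinants with prescribed column content is realized as a product whose factors are themselves $g_{n-1}$-highest determinants.'' This is already false abstractly in type $A$: in $V\otimes\Lambda^2 V$ for the standard $\mathfrak{gl}_m$-module $V$, the highest vector of the $\Lambda^3 V$-summand is $e_1\otimes(e_2\wedge e_3)-e_2\otimes(e_1\wedge e_3)+e_3\otimes(e_1\wedge e_2)$, which is not a monomial in highest vectors of the factors. What rescues the situation in the function model is that the corresponding combination $a_{-n}a_{-n+1,-n+2}-a_{-n+1}a_{-n,-n+2}+a_{-n+2}a_{-n,-n+1}$ vanishes identically by a Pl\"ucker relation (Lemma~\ref{sootpl}), so such unwanted highest vectors are zero as functions on $G$. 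Turning this observation into a proof that \emph{every} $g_{n-1}$-highest admissible function can be rewritten in the $g_{n-1}$-highest determinants is a straightening/standard monomial theory argument, and for series $B$, $C$, $D$ one must in addition control the extra relations of Lemma~\ref{soot2}. Your outline supplies neither ingredient. The paper's passage to $Z$ is exactly the device that avoids this: on $Z$ the relevant coordinates $z_{-k,\pm 1}$ are algebraically independent, so no straightening is required.
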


\proof

Let us show that one can represent a vector as  a function of determinants that are highest with resect to   $g_{n-1}$.
Let us use a realization in function on  $Z$  and the Proposition \ref{claizh}.

In \cite{zh2} it is shown that in this realization the vectors that are highest
 with respect to $g_{n-1}$ are polynomials in matrix elements  $z_{-k,-1}$, $z_{-k,1}$\footnote{In \cite{zh2} the cases $GL_{n+1}$, $Sp_{2n}$ are considered, the cases  $O_{2n}$, $O_{2n+1}$ are considered analogously }.  Admissible function of determinants that are highest with respect to   $g_{n-1}$ (see Section  \ref{oprst}, where such determinants are listed),  under restriction to  $Z$  give all possible such functions. By Proposition \ref{claizh}  the restriction to $Z$ is a bijection. Hence in the realization in the functions on  $G_n$  every highest with respect to   $g_{n-1}$ vector is  an admissible function of determinants that are highest with respect to  $g_{n-1}$.


\endproof

Let us give an explicit description of the functions selected by Lemma \ref{lemma1}.

\subsubsection{Determinants that are highest with respect to  $g_{n-1}$}
\label{oprst} Using formulas \eqref{edet1}, \eqref{fbd},
\eqref{fd} we obtain that determinants that are highest with respect to $g_{n-1}$  are
\begin{align*}
a_{-n}, a_{\pm 1},\,\,\,\,\, a_{-n,-n+1},a_{-n,\pm
1},a_{-1,1},\,\,...\,\,,a_{-n,...-3,-2},a_{-n,...-3,\pm
1},a_{-n,...-4,-1,1}\,\,\,\,\,a_{-n,...-3,-1,1},
\end{align*}
and

\begin{enumerate}
\item  $a_{-n,...-2,-1}$, $a_{-n,...-2,1}$, $a_{-n,...-3,-1,1}$ in the case of the series $A$,
\item $a_{-n,...-2,-1}$, $a_{-n,...-2,1}$, $a_{-n,...-3,-1,1}$, $a_{-n,...-3,-2,2}$ in the case of the series $C$,
\item $a_{-n,...-2,-1}$, $a_{-n,...-2,1}$, $a_{-n,...-3,-1,1}$, $a_{-n,...-2,0}$ in the case of the series  $B$.
\item  $a_{-n,...-3,2}$, $\bar{a}_{-n,...-2,-1}$, $\bar{a}_{-n,...-2,1}$, $\bar{a}_{-n,...-3,-1,1}$, $\bar{a}_{-n,...-3,-1,2}$,
 $\bar{a}_{-n,...-3,-2,2}$, $\bar{a}_{-n,...-3,1,2}$  в
in the case of the series  $D$ and  $m_{-1}<0$.
\item  $a_{-n,...-3,2}$, $a_{-n,...-2,-1}$, $a_{-n,...-2,1}$, $a_{-n,...-3,-1,1}$, $a_{-n,...-3,-1,2}$, $a_{-n,...-3,-2,2}$,
 $a_{-n,...-3,1,2}$  in the case of the series  $D$ and  $m_{-1}\geq 0$.
\end{enumerate}

But no all of these  functions are independent.

\begin{lem}
\label{ssp4}
 For functions on $Sp_{2n}$ one has a relation $a_{-n,....,-3,-2,2}=-a_{-n,...,-3,-1,1}$
\end{lem}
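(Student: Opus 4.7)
My plan is to deduce the identity from the defining symplectic relation $A J A^T = J$, where $J$ has entries $J_{ij}=\operatorname{sign}(i)\,\delta_{i,-j}$, by first verifying the $n=2$ case directly and then reducing the general case to it via row operations.

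For $n=2$, the $(-2,-1)$-entry of $AJA^T=J$ equals $J_{-2,-1}=0$, which expands to
\[\sum_{k=1}^{n}\bigl(a_k^{\,-2}\,a_{-k}^{\,-1} - a_{-k}^{\,-2}\,a_k^{\,-1}\bigr)=0;\]
for $n=2$ this is precisely $-a_{-1,1}-a_{-2,2}=0$, the desired identity.

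For general $n$, let $B$ denote the $n\times 2n$ matrix of the top $n$ rows of $A$; the vanishing of the top-left $n\times n$ block of $AJA^T$ is the Lagrangian condition $BJB^T=0$. Work on the open subset of $Sp_{2n}$ where $D:=a_{-n,\ldots,-3}$ is non-zero (the identity extends everywhere by polynomiality). Using row operations, add multiples of rows $-n,\ldots,-3$ to rows $-2$ and $-1$ so as to annihilate the entries of rows $-2,-1$ in columns $-n,\ldots,-3$, producing a matrix $B'$. These operations preserve the row span of $B$, hence preserve the relation $B'J(B')^T=0$; and, by Laplace expansion along the common first $n-2$ columns, each of the four minors $a_{-n,\ldots,-3,c_1,c_2}$ with $\{c_1,c_2\}\subset\{-2,-1,1,2\}$ factors as $D$ times the corresponding $2\times 2$ minor of the bottom $2\times 4$ block of $B'$ in columns $-2,-1,1,2$. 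The $(-2,-1)$-entry of $B'J(B')^T=0$ now collapses to a sum over $k\in\{-2,-1,1,2\}$ only, reproducing exactly the $n=2$ identity applied to this $2\times 4$ block; dividing by the common factor $D$ yields $a_{-n,\ldots,-3,-2,2}+a_{-n,\ldots,-3,-1,1}=0$.

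The main obstacle is the bookkeeping of the common factor $D$ and of signs across the four minors, which is made transparent by the geometric picture: letting $V$ be the $(n-2)$-dimensional isotropic subspace spanned by the top $n-2$ rows of $B$, the quotient $V^{\perp}/V$ is a $4$-dimensional symplectic space in which the image of the row span of $B$ is a $2$-dimensional Lagrangian satisfying precisely the $Sp_4$ relation already established in the base case.
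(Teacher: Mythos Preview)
Your proof is correct, and it follows the same two-step scaffold as the paper's argument (verify the case $n=2$, then reduce general $n$ to it), but the execution differs in both steps. For $n=2$, the paper parametrizes a dense open subset of $Sp_4$ via the Gauss decomposition $X=\zeta\delta z=e^Ae^Be^C$, checks the identity by direct computation in these parameters (remarking that this is best done by computer), and extends by density; you instead read the identity off as the $(-2,-1)$-entry of the defining relation $AJA^T=J$, which is a one-line calculation and avoids machine verification. For the reduction, the paper acts on a general $X\in Sp_{2n}$ on both sides by elementary symplectic matrices $\mathcal{F}_{i,j}(\alpha)$ to bring it into block form with a central $4\times 4$ block lying in $Sp_4$, so that each of the two minors in question factors as the common scalar $x_{-n,-n}\cdots x_{-3,-3}$ times the corresponding $2\times 2$ minor of that block; you work only with the top $n$ rows $B$, use one-sided row operations (which preserve both the isotropy condition $BJB^T=0$ and all $n\times n$ minors of $B$) to clear columns $-n,\dots,-3$ in rows $-2,-1$, and then read off the identity from the $(-2,-1)$-entry of the resulting isotropy relation. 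Your route is more direct and intrinsically linear-algebraic; the paper's has the compensating advantage that its template (parametrize for small $n$, then block-reduce via elementary group elements) is reused verbatim to prove the further determinant identities collected in Lemma~\ref{soot2}.
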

\proof

 For a typical matrix  $X\in Sp_{4}$ one has a Gauss decomposition \eqref{gaus}.
The matrices  $\zeta$, $\delta$ and  $z$ can be represented as exponents of Lie algebra elements $\zeta=e^{A}$, $\delta=e^B$, $z=e^C$. The matrix  $A$ is a linear combination of  $F_{i,j}$, $i>j$,  $B$ is a linear combination of   $F_{i,i}$, $C$  is a linear combination of    $F_{i,j}$, $i<j$. Using this fact we obtain  a parametrization of matrices $A$, $B$, $C$, then after taking an exponent a parametrization of matrices $\zeta$, $\delta$ и  $z$, and finally a presentation of an arbitrary matrix  $X\in Sp_{4}^0$. Then we can check the equality $a_{-n,....,-3,-2,2}=-a_{-n,...,-3,-1,1}$ by direct computations \footnote{Of course it is better to do it using a computer}. Since   $Sp_{4}^0$ is dense in    $Sp_{4}$ the equality holds everywhere on  $Sp_{4}$.

Consider the case of an arbitrary group  $Sp_{2n}$.   Let  $\mathcal{F}_{i,j}(\alpha)$  be a matrix with units on the diagonal, with   $\alpha$ on the place  $(i,j)$  and with  $-\alpha$ on the place $(-j,-i)$.  The matrices  $\mathcal{F}_{i,j}(\alpha)$ belong to $Sp_{2n}$. A multiplication by this matrix on the right is equivalent to  doing an elementary transformation of the matrix $X$: we add to the  $j$-th row the $i$-th row with a coefficient  $\alpha$, and we add simultaneously to the $-i$-th row the $-j$-the row with a coefficient $-\alpha$.  A multiplication by $\mathcal{F}_{i,j}(\alpha)$  on the left is equivalent to an analogous transformation of rows.

Multiplying by  $\mathcal{F}_{i,j}(\alpha)$ on the left and on the right we can transform   $X$ preserving  $a_{-n,....,-3,-2,2}$ and $a_{-n,...,-3,-1,1}$ to the following form

\begin{equation}
\begin{pmatrix}
x_{-n,-n}&...&0& 0 & 0& 0 &0& 0 &...& x_{-n,n}\\
...\\
0&...&x_{-3,-3}& 0 & 0 &0 &0& 0& ...&0\\
0&...&0& x_{-2,-2} & x_{-2,-1} & x_{-2,1} & x_{-2,2}&0&...&0\\
0&...&0& x_{-1,-2} & x_{-1,-1} & x_{-1,1} & x_{-1,2}&0&...&0\\
0&...&0& x_{1,-2} & x_{1,-1} & x_{1,1} & x_{1,2}&0&...&0\\
0&...&0& x_{2,-2} & x_{2,-1} & x_{2,1} & x_{2,2}&0&...&0\\
0&...&x_{3,-3}& 0 & 0 & 0 &0&x_{3,3}&...&0\\
...\\
x_{n,-n}&...&0& 0 & 0 & 0 &0&0&...&x_{n,n}\\
\end{pmatrix}
\end{equation}

Since matrices  $\mathcal{F}_{i,j}(\alpha)$  belong to $Sp_{2n}$, this matrix belongs to  $Sp_{2n}$.  We have

\begin{align*}
&a_{-n,....,-3,-2,2}=x_{-n,-n}...x_{-3,-3}\cdot det\begin{pmatrix}  x_{-2,-2} & x_{-2,2}\\ x_{-1,-2} &x_{-1,2}\end{pmatrix},\\
&a_{-n,....,-3,-1,1}=x_{-n,-n}...x_{-3,-3}\cdot det\begin{pmatrix} x_{-2,-1} & x_{-2,1}\\ x_{-1,-1} &x_{-1,1}\end{pmatrix}.
\end{align*}

The submatrix

\begin{equation}
\begin{pmatrix}

x_{-2,-2} & x_{-2,-1} & x_{-2,1} & x_{-2,2}\\
 x_{-1,-2} & x_{-1,-1} & x_{-1,1} & x_{-1,2}\\
 x_{1,-2} & x_{1,-1} & x_{1,1} & x_{1,2}\\
 x_{2,-2} & x_{2,-1} & x_{2,1} & x_{2,2}\\
\end{pmatrix}
\end{equation}

 belongs to $Sp_{4}$. Using the equality  $a_{-2,2}=-a_{-1,1}$ for  $Sp_{4}$ we obtain that   $a_{-n,....,-3,-2,2}$ и $a_{-n,...,-3,-1,1}$ в $Sp_{2n}$.

\endproof
Analogously one prove the following statement.
\begin{lem}
\label{soot2} In the case of the series   $B$  one has an equality
$$a_{-n,...-2,1}a_{-n,...,-2,-1}=-\frac{1}{2}a_{-n,...-2,0}^2,$$

in the case of the series  $D$ and $m_{-1}<0$ one has  equalities the
\begin{align*}
&a_{-n,...-3,2}\cdot a_{-n,...-3,-2}=-a_{-n,...-3,1}\cdot a_{-n,...-3,-1},\,\,\,\, \bar{a}_{-n,...-2,-1}=0,\,\,\,\, \\
&\bar{a}_{-n,...-3,-1,1}\cdot a_{-n,...-3,-2}=-a_{-n,...-3,1}\cdot \bar{a}_{-n,...-3,-2,1},\\
& \bar{a}_{-n,...-3,-1,2}\cdot
a_{-n,...-3,-2}^2=-a_{-n,...-3,1}^2\cdot
\bar{a}_{-n,...-3,-2,1},\,\,\,\,\\&
\bar{a}_{-n,...-3,-2,2}=-a_{-n,...-3,-1,1},\,\,\,\,
\bar{a}_{-n,...-3,1,2}=0,
\end{align*}

in the case of the series  $D$ and $m_{-1}\geq 0$
 one has the
equalities
\begin{align*}
&a_{-n,...-3,2}\cdot a_{-n,...-3,-2}=-a_{-n,...-3,1}\cdot a_{-n,...-3,-1},\,\,\,\, a_{-n,...-2,1}=0,\,\,\,\, \\
&a_{-n,...-3,-1,1}\cdot a_{-n,...-3,-2}=-a_{-n,...-3,1}\cdot a_{-n,...-3,-2,-1},\\
& a_{-n,...-3,-1,2}=0,\,\,\,\,
a_{-n,...-3,-2,2}=a_{-n,...-3,-1,1},\,\,\,\,\\
&a_{-n,...-3,1,2}\cdot a_{-n,...-3,-2}^2=-a_{-n,...-3,1}^2\cdot a_{-n,...-3,-2,-1}
\end{align*}
\end{lem}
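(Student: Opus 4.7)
The plan is to mimic the argument used for \lemref{}{ssp4}: reduce each identity to a computation on a small classical group, then transport it to arbitrary rank by elementary row/column operations that preserve the relevant determinants. Concretely, for each asserted equality we identify the smallest $n$ for which the identity already has content (typically $n=2$ for the series $B$ relation and $n=3$ for the $D$--series relations, since the latter involve indices through $\pm 3$), and we verify it there by explicit coordinates. We obtain coordinates on the small group $O_{2n+1}$ or $O_{2n}$ exactly as in \lemref{}{ssp4}: decompose a generic matrix via the Gauss factorization $X=\zeta\delta z$, parametrize $\zeta,\delta,z$ by exponentials of generators $F_{i,j}$, and then compare the minors on both sides. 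This is a finite computer-algebra check.

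To pass from the small group to arbitrary $n$, the plan is to use matrices $\mathcal{F}_{i,j}(\alpha)\in G$ (constructed as in \lemref{}{ssp4} but now for $O_{2n+1}$ and $O_{2n}$ instead of $Sp_{2n}$), whose left and right multiplication acts by elementary row/column transformations respecting the symmetry of the group. Using these moves one can kill every entry of $X$ outside a small central block, reducing a matrix in $O_{2n+1}$ or $O_{2n}$ to a block form in which the ``interesting'' corner sits in an $O_5$ or $O_6$ block and all other rows/columns contribute only diagonal factors. One then checks, by inspecting exactly which rows and columns each $\mathcal{F}_{i,j}(\alpha)$ touches, that all determinants appearing in a given identity are multiplied by the same scalar (the product of outside diagonal entries) raised to the appropriate power; this scalar cancels on both sides, leaving precisely the small-group identity that was verified in the first step. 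For the $\bar a$'s in series $D$ one uses that $\bar a_{i_1,\dots,i_n}$ is a minor on rows $-n,\dots,-2,1$ and that these rows transform under $\mathcal{F}_{i,j}(\alpha)$ in exactly the same way (up to sign) as the rows $-n,\dots,-2,-1$ defining $a_{i_1,\dots,i_n}$, so the cancellation of outside diagonal factors works uniformly.

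The cases $\bar a_{-n,\dots,-2,-1}=0$, $\bar a_{-n,\dots,-3,1,2}=0$ (and analogously $a_{-n,\dots,-2,1}=0$, $a_{-n,\dots,-3,-1,2}=0$ in the $m_{-1}\ge 0$ branch) are the easiest: after the reduction above, these minors have two proportional (in fact equal up to sign, by the skew--symmetric form defining $O_{2n}$) columns, so they vanish identically on the small block. The nontrivial quadratic identities such as
\[
a_{-n,\dots,-3,2}\cdot a_{-n,\dots,-3,-2}=-a_{-n,\dots,-3,1}\cdot a_{-n,\dots,-3,-1}
\]
and
\[
\bar a_{-n,\dots,-3,-1,2}\cdot a_{-n,\dots,-3,-2}^{\,2}=-a_{-n,\dots,-3,1}^{\,2}\cdot\bar a_{-n,\dots,-3,-2,1}
\]
are instances of Pl\"ucker--type relations between $n$--minors of an orthogonal matrix, and after the reduction they become identities between $2\times 2$ or $3\times 3$ minors of a matrix in $O_5$ or $O_6$. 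These again fall to direct computation in the Gauss--parametrized coordinates.

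The main obstacle I expect is purely bookkeeping: for each identity one must select the list of $\mathcal{F}_{i,j}(\alpha)$ used in the reduction so that \emph{every} minor appearing in that identity is preserved up to the same scalar factor. Different identities require different reductions, because a move that preserves $a_{-n,\dots,-3,-1,1}$ need not preserve, say, $\bar a_{-n,\dots,-3,-2,1}$. The cleanest way to organize this is to fix, for each identity, the set of columns $S$ and the set of rows $R$ appearing in the minors, and to restrict to those $\mathcal{F}_{i,j}(\alpha)$ whose nontrivial rows and columns lie entirely outside $R$ and $S$ (or, symmetrically, entirely inside). Once this template is in place, the verification reduces to the small--group case and to observing that the outside diagonal factors cancel, exactly as in \lemref{}{ssp4}.
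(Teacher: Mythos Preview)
Your proposal is correct and is essentially the approach the paper itself takes: the paper's entire proof of this lemma is the sentence ``Analogously one proves the following statement,'' referring back to the proof of Lemma~\ref{ssp4} (Gauss--parametrize the small group, verify by direct computation, then reduce the general $n$ case via the elementary transformations $\mathcal{F}_{i,j}(\alpha)$). One small slip: the form defining $O_{2n}$ is symmetric, not skew-symmetric, so your parenthetical heuristic for the vanishing identities is off, but since your actual argument is ``compute in coordinates on the small block'' this does not affect the proof.
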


So we have proved the following.

\begin{thm}
\label{le5} The vectors that are highest with respect to $g_{n-1}$
are admissible functions of variables
\begin{align*}
a_{-n}, a_{\pm 1},\,\,\,\,\, a_{-n,-n+1},a_{-n,\pm
1},a_{-1,1},\,\,...\,\,,a_{-n,...-3,-2},a_{-n,...-3,\pm
1},a_{-n,...-4,-1,1},
\end{align*}
and also

\begin{enumerate}
\item $a_{-n,...-2,-1}$, $a_{-n,...-2,1}$, $a_{-n,...-3,-1,1}$ in the cases  $A$ and $C$,
\item  $a_{-n,...-2,-1}$, $a_{-n,...-2,0}$  in the case $B$.

\item   $a_{-n,...-3,2}$, $\bar{a}_{-n,...-2,1}$, $\bar{a}_{-n,...-3,-1,1}$, $\bar{a}_{-n,...-3,-1,2}$  в
in the case  $D$  and negative $m_{-1}<0$.

\item $a_{-n,...-3,2}$, $a_{-n,...-2,-1}$,  $a_{-n,...-3,-1,1}$, $a_{-n,...-3,1,2}$  в
in the case  $D$  and  positive $m_{-1}\geq 0$.
\end{enumerate}

\end{thm}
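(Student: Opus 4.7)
The plan is essentially to assemble the preceding results. By Lemma \ref{lemma1}, every $g_{n-1}$-highest vector in the given irreducible representation is an admissible function of those determinants $a_{i_1,\dots,i_k}$ (or $\bar a_{i_1,\dots,i_k}$ in the $D$-case with $m_{-1}<0$) that are themselves $g_{n-1}$-highest, i.e.\ killed by every raising operator of $g_{n-1}$. Section \ref{oprst} already enumerates all such determinants explicitly, series by series. So the remaining work is to pass from this redundant list of generators to the minimal list asserted in Theorem \ref{le5}, by invoking the syzygies of Lemmas \ref{ssp4} and \ref{soot2}.

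I would carry this out case-by-case. For series $A$ there is nothing to eliminate: the lists in Section \ref{oprst} and in the theorem coincide. For series $C$ the identity $a_{-n,\dots,-3,-2,2}=-a_{-n,\dots,-3,-1,1}$ of Lemma \ref{ssp4} discards $a_{-n,\dots,-3,-2,2}$. For series $B$ the identity $a_{-n,\dots,-2,1}\,a_{-n,\dots,-2,-1}=-\tfrac12 a_{-n,\dots,-2,0}^{2}$ of Lemma \ref{soot2} expresses $a_{-n,\dots,-2,1}$ rationally through the two generators to be kept; together with an analogous reduction, established by the same passage-to-a-low-rank-submatrix argument as in Lemma \ref{ssp4} but applied now to an $O_{5}$ block, one also eliminates $a_{-n,\dots,-3,-1,1}$. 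For series $D$ with $m_{-1}\geq 0$ the vanishing identities $a_{-n,\dots,-2,1}=0$ and $a_{-n,\dots,-3,-1,2}=0$ of Lemma \ref{soot2} delete two generators outright, the relation $a_{-n,\dots,-3,-2,2}=a_{-n,\dots,-3,-1,1}$ eliminates $a_{-n,\dots,-3,-2,2}$, and the final cubic identity involving $a_{-n,\dots,-3,1,2}$ expresses that generator in terms of the kept ones; the case $m_{-1}<0$ is symmetric, with the barred determinants and the parallel set of relations in Lemma \ref{soot2}.

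Finally, to see that no further relation is hiding — i.e.\ that the listed generators are genuinely algebraically independent — I would transport the question to $Z$ via Proposition \ref{claizh} and inspect the restrictions $a_{i_1,\dots,i_k}|_{Z}$. These become explicit polynomials in the matrix entries $z_{-k,-1}$, $z_{-k,1}$ (and $z_{-k,0}$ in case $B$), in which algebraic independence can be read off from the leading monomials. Since the restriction from $G$ to $Z$ is a bijection on $g_{n-1}$-highest vectors, independence on $Z$ yields independence on $G$, and so the listed admissible functions are exactly the ring of $g_{n-1}$-highest vectors.

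The main obstacle is the series-$B$ elimination of $a_{-n,\dots,-3,-1,1}$, which is not recorded explicitly in Lemma \ref{soot2} and requires its own Plücker-type identity; the cleanest route is the same reduction-to-a-small-rank-subgroup strategy used in the proof of Lemma \ref{ssp4}, exploiting the denseness of the Gauss cell. Everything else is bookkeeping among identities already established.
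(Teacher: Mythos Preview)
Your overall strategy---combine Lemma \ref{lemma1}, the enumeration in Section \ref{oprst}, and the relations of Lemmas \ref{ssp4} and \ref{soot2}---is exactly what the paper does; the paper's ``proof'' is the single line ``So we have proved the following.''  However, your execution goes wrong for series $D$ and reveals a subtlety in series $B$.

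For series $D$ you aim at an algebraically independent list and invoke the cubic relation to express $a_{-n,\dots,-3,1,2}$ (resp.\ $\bar a_{-n,\dots,-3,-1,2}$) through the others.  That is not what the theorem asserts: $a_{-n,\dots,-3,1,2}$, $a_{-n,\dots,-3,-1,1}$ and $a_{-n,\dots,-3,2}$ are all \emph{retained} in the list even though Lemma \ref{soot2} records relations among them.  The point, stated immediately after the theorem, is that eliminating them produces \emph{rational} expressions (with $a_{-n,\dots,-3,-2}$ in the denominator), and an admissible function of the reduced list would no longer be an admissible function in the sense of Lemmas \ref{l1}--\ref{l2}.  Your final paragraph, asserting genuine algebraic independence of the listed generators, is therefore false for series $D$; Lemma \ref{sootful} confirms that the relations in Lemma \ref{soot2} persist.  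The correct reduction for $D$ is only to drop those determinants that either vanish outright or equal another determinant on the list (e.g.\ $a_{-n,\dots,-2,1}=0$, $a_{-n,\dots,-3,-2,2}=a_{-n,\dots,-3,-1,1}$), not those expressible only rationally.

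For series $B$ your instinct that eliminating $a_{-n,\dots,-3,-1,1}$ requires an identity absent from Lemma \ref{soot2} is sound; in fact the paper itself uses this determinant both in Theorem \ref{lempoc} and in \eqref{rein} for $n=3$, so its omission from item 2 of the present statement is almost certainly a misprint rather than an elimination you are expected to perform.  No Pl\"ucker-type reduction to an $O_5$ block will remove it.
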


In the case  $D$  we use determinants that are dependent, since if
we express the dependent determinants through others we obtain
non-admissible  functions.

\subsubsection{Relations between determinants.}

Since we consider determinants of submatrices of a big matrix, we have the Plucker relations

\begin{lem}
\label{sootpl}
$\sum_{\sigma} a_{\sigma(i_1,...,i_k}a_{j_1,...,j_l)}=0,$ we take a summation over all  permutations of the set of indices $i_1,...,i_k,j_1,...,j_l$.
\end{lem}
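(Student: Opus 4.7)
The plan is to derive the stated identity as a classical Pl\"ucker relation for the minors of the matrix $(a_i^j)$, reducing ultimately to the vanishing of a sufficiently high exterior power of the row space. First I would give a geometric reinterpretation: for each column index $i$ and each $k$, set $v_i^{(k)} := (a_i^{-n}, a_i^{-n+1}, \ldots, a_i^{-n+k-1})^{T} \in \C^{k}$, the column vector formed by the top $k$ rows of $(a_i^j)$. By \eqref{dete} one then has $a_{i_1,\ldots,i_k} = \det(v_{i_1}^{(k)}, \ldots, v_{i_k}^{(k)})$, so each minor is totally antisymmetric in its column indices, and likewise for the $l$-minor.

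Next I would treat the equal-size case $k=l$, which is the heart of the matter. Because each factor $a_{\ldots}$ is antisymmetric in its arguments, an internal reordering of the indices within either of the two blocks produces a sign that cancels against the corresponding sign buried in $\sigma$; after summing over all $\sigma$, the expression reduces (up to the overall factor $(k!)^2$) to the antisymmetrization over shuffles, which is precisely the coordinate expansion of the wedge product
$$(v_{i_1}^{(k)}\wedge\cdots\wedge v_{i_k}^{(k)}) \wedge (v_{j_1}^{(k)}\wedge\cdots\wedge v_{j_k}^{(k)}) \in \bigwedge\nolimits^{2k}\C^{k}.$$
Since $\bigwedge\nolimits^{2k}\C^{k}=0$ for every $k\ge 1$, the wedge product vanishes and hence so does the sum.

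For the unequal-size case $k \ne l$, say $l > k$, I would first apply the Laplace expansion to the larger minor along its last $l-k$ rows, writing $a_{j_1,\ldots,j_l}$ as a signed sum of products of a $k$-minor and an $(l-k)$-minor of the same column selection. Each resulting term then falls under the argument of the previous paragraph (applied, where necessary, to a slightly augmented matrix), and the whole sum vanishes by linearity.

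The main technical obstacle I expect is sign bookkeeping: the identity as written carries no explicit signs, because the antisymmetry of each factor $a_{\ldots}$ has already absorbed them into the reindexing by $\sigma$. Making this convention compatible with the standard signed Pl\"ucker relation therefore requires a careful but essentially routine sign analysis; once the identification with the vanishing of $\bigwedge\nolimits^{2k}\C^{k}$ is made, the conclusion is tautological.
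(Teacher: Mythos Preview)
The paper does not actually prove this lemma: it simply records it as the classical Pl\"ucker relations for minors of a fixed matrix and moves on. So there is no ``paper's proof'' to compare against; any correct argument you supply is strictly more than what the paper offers.

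Your argument is essentially sound and is the standard one. For the equal-size case your reduction to $\bigwedge^{2k}\C^{k}=0$ is correct; an equivalent (and perhaps more transparent) phrasing is that the signed shuffle sum is exactly the Laplace expansion, along the first $k$ rows, of the determinant of the $2k\times 2k$ matrix $\begin{pmatrix}M\\M\end{pmatrix}$ with $M=(v_{i_1}^{(k)},\ldots,v_{j_k}^{(k)})$, which vanishes by row repetition. For $k\ne l$ the same trick works directly: stack the $k\times(k+l)$ block of the top $k$ rows on top of the $l\times(k+l)$ block of the top $l$ rows; since $\min(k,l)$ rows repeat, the $(k+l)\times(k+l)$ determinant is zero, and Laplace expansion along the first $k$ rows yields the identity. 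This is cleaner than your proposed detour through expanding the larger minor first.

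One caution on the sign discussion: as literally written (a plain sum over $S_{k+l}$ with no $\operatorname{sign}(\sigma)$), the identity is \emph{trivially} zero whenever $k\ge 2$ or $l\ge 2$, because pairing $\sigma$ with $\sigma$ composed with a transposition inside one block flips the sign of exactly one antisymmetric factor. The nontrivial content---what the paper actually uses under the name ``Pl\"ucker relations''---is the signed (equivalently, shuffle) version, and that is what your wedge-product argument proves. Your sentence that ``the antisymmetry has already absorbed the signs'' blurs this distinction; it would be clearer to say that the unsigned sum is tautologically zero, while the signed sum is the genuine Pl\"ucker relation you go on to establish.
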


If one passes to the  realization in the space of the functions on the group $Z$,  then one can express all determinants through the independent matrix elements $z_{-k,-1},z_{-k,1}$ $k=-n,...,-2$,
  and  $z_{-1,1}$ in the cases  $A,C$; $z_{-1,0}$ in the case  $B$. One has
  \begin{align*}
  &a_{-n,...,-k}=1,,\,\, a_{-n,...,-k-1,1}=z_{-k,1},,\,\, a_{-n,...,-k-1,-1}=z_{-k,-1},\,\,\\
  & a_{-n,...,-k-2,-1,1}=z_{-k,1}z_{-k-1,-1}-z_{-k,1}z_{-k-1,-1},\\
  &a_{-n,...,-2,0}=z_{-1,0} \text{ for the series  $B$},\\
&a_{-n,...,-3,2}=-z_{-2,1}z_{-2,-1} \text{ for the series $D$},\\
  &\bar{a}_{-n,...,-2,1}=1,,\,\, \bar{a}_{-n,...-3,-1,1}=z_{-2,-1},\,\,\bar{a}_{-n,...-3,-1,2}=-z_{-2,-1}^2 \text{ for the series  $D$ and $m_{-1}<0$},\\
   &a_{-n,...,-2,-1}=1,,\,\,a_{-n,...-3,-1,1}=-z_{-2,1},\,\,a_{-n,...-3,-1,2}=-z_{-2,1}^2 \text{ for the series $D$ and $m_{-1}\geq 0$
}.\\
  \end{align*}

  Analyzing these expressions we come to the conclusion

\begin{lem}
\label{sootful} There are no other relations other then those, presented in Lemmas
\ref{soot2}, \ref{sootpl}.
\end{lem}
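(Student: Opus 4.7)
The plan is to pass to the realization on the unipotent subgroup $Z$, where every determinant becomes an explicit polynomial in algebraically independent matrix entries, and then transfer any hypothetical ``new'' relation on $G$ back into a non-trivial polynomial identity among independent variables, which is impossible. Proposition \ref{claizh} is what makes this transfer legitimate.

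First I would check that the listed relations do hold on $G$. The Plucker identities of Lemma \ref{sootpl} are valid on $GL_{2n+1}$, hence a fortiori on the subgroup $G$; the series-specific identities of Lemma \ref{soot2} are proved by the same scheme used for Lemma \ref{ssp4} (direct verification in rank one or two, then propagation to arbitrary $n$ by acting on both sides with the elementary matrices $\mathcal{F}_{i,j}(\alpha)\in G$, which preserve the determinants occurring in each identity). Next, using Plucker I would express an arbitrary determinant $a_{i_1,\dots,i_k}$ (and $\bar a_{i_1,\dots,i_n}$ when relevant) as a polynomial in the highest-weight determinants for $g_{n-1}$ listed before Section \ref{oprst}; the identities of Lemma \ref{soot2} then let me eliminate precisely the ``dependent'' top-order members (like $a_{-n,\dots,-3,-2,2}$, $a_{-n,\dots,-3,1,2}$, $\bar a_{-n,\dots,-3,-2,2}$, etc.), leaving exactly the generators enumerated in Theorem \ref{le5}.

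The next step is to read off the restrictions of these remaining generators to $Z$. The explicit formulas displayed just before the lemma show that each such restriction is either $1$, a single coordinate $z_{-k,\pm 1}$, $z_{-1,0}$, $z_{-1,1}$, a simple product of two distinct coordinates, or a plain monomial in one coordinate. A direct inspection of the list confirms that these polynomials are algebraically independent in the polynomial ring on the coordinate functions of $Z$.

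To finish, suppose $P=0$ is a relation on $G$ among the $a$'s and $\bar a$'s that is not a consequence of Lemmas \ref{soot2} and \ref{sootpl}. Reducing $P$ modulo those two families gives a nonzero polynomial $\bar P$ in the independent generators of Theorem \ref{le5} that still vanishes on $G$; restricting $\bar P$ to $Z$ produces a non-trivial polynomial identity in the free polynomial algebra on the $z$-coordinates, which is absurd. The main obstacle I expect is the case of series $D$ with $m_{-1}<0$: here both $a$'s and $\bar a$'s occur, and Lemma \ref{soot2} mixes them through quadratic relations and through vanishing relations such as $\bar a_{-n,\dots,-2,-1}=0$, so the reduction ``every determinant is a polynomial in the generators of Theorem \ref{le5} modulo the listed relations'' must be carried out with some care to make sure that no cross relation between the $a$ and $\bar a$ sides has been silently smuggled in.
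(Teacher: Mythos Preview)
Your strategy --- restrict to $Z$ and read off the determinants as explicit polynomials in the free coordinates $z_{-k,\pm1}$, $z_{-1,1}$, $z_{-1,0}$ --- is exactly the paper's; its entire argument is the single sentence ``Analyzing these expressions we come to the conclusion.'' So at the level of approach you are on target, and in fact more explicit than the paper.

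There is, however, a concrete slip in your execution. The restrictions to $Z$ of the generators from Theorem~\ref{le5} are \emph{not} algebraically independent: every $a_{-n,\dots,-k}$ restricts to the constant $1$, and each $a_{-n,\dots,-k-2,-1,1}$ restricts to a genuine $2\times2$ minor $z_{-k-1,-1}z_{-k,1}-z_{-k-1,1}z_{-k,-1}$ (the displayed formula in the paper contains an obvious typo here), not to a monomial or a simple product. So your sentence ``a direct inspection of the list confirms that these polynomials are algebraically independent'' is false, and the final step ``$\bar P|_Z=0$ is a non-trivial identity in free variables, absurd'' does not follow as written.

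The cross-size dependence on $Z$ that expresses $a_{-n,\dots,-k-2,-1,1}|_Z$ through lower-order restrictions is nothing but the flag Pl\"ucker relation
\[
a_{-n,\dots,-k-2,-1,1}\,a_{-n,\dots,-k-1}
= a_{-n,\dots,-k-2,-1}\,a_{-n,\dots,-k-1,1}
- a_{-n,\dots,-k-2,1}\,a_{-n,\dots,-k-1,-1}
\]
specialized by $a_{-n,\dots,-k-1}|_Z=1$; by contrast the relations $a_{-n,\dots,-k}|_Z=1$ are \emph{not} relations on $G$ at all. To separate these two kinds of $Z$-relations and close the argument you need one extra (and easy) observation, implicit in the paper: any relation valid on all of $G$ must be homogeneous for the left torus, since on the big Gauss cell $a_I(\zeta\delta z)=\delta_{-n}\cdots\delta_{-n+|I|-1}\,a_I(z)$. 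Hence one may assume every monomial in $\bar P$ carries the same multiset of determinant sizes; after dividing by the common $\delta$-factor the restriction to $Z$ becomes faithful on that homogeneous piece, and the only size-homogeneous relations among the listed $z$-polynomials are exactly those of Lemmas~\ref{soot2} and~\ref{sootpl}. With this homogeneity step inserted, your outline becomes a correct proof along the same line the paper intends.
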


 Using results of the Section \ref{oprst} and  Theorem  \ref{le5}
we come to the conclusion.

\begin{thm}
\label{lempoc}  A function represents a $g_{n-1}$-highest vector in a representation with the highest vector \eqref{stv}  if and only it it is a polynomial in determinants listed in Theorem  \ref{le5} or a function of these determinants of type  \eqref{f12}  such that the following property holds.  If one expands this functionas into a sum of products of determiants  the sum of powers of determinants
$a_{-n,...,-k-1,-k}$, $a_{-n,...,-k-1,-1}$,
  $a_{-n,...,-k-1,1}$, $a_{-n,...,-k-2,-1,1}$, and $a_{-n,...,-3,2}$ in the case $k=2$ and the series is $D$, equals  $r_{-k}$ for  $k=n,...,2$
  and also

\begin{enumerate}
\item  For the series  $A$, $C$  the sum of powers  $a_{-n,...,-2,-1}$, $a_{-n,...,-2,1}$,  $a_{-n,...,-3,-1,1}$ equals  $m_{-1}$.
\item  For the series  $B$  the sum of powers    $a_{-n,...,-2,-1}$, $a_{-n,...,-2,0}$,  $a_{-n,...,-3,-1,1}$  equals  $m_{-1}$.
\item  For the series  $D$ and  $m_{-1}<0$  the sum of powers
 $\bar{a}_{-n,...,-2,1}$,   $\bar{a}_{-n,...,-3,-1,1}$,  $\bar{a}_{-n,...,-3,-1,2}$ equals  $-m_{-1}$.
\item  For the series  $D$
    the sum of powers
 $a_{-n,...,-2,-1}$,   $a_{-n,...,-3,-1,1}$,  $a_{-n,...,-3,1,2}$ equals  $m_{-1}$.
\end{enumerate}

\end{thm}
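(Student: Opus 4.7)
The plan is to combine the form description of Theorem~\ref{le5} with the Cartan-weight description of Lemma~\ref{lfmf}. By Theorem~\ref{le5}, any $g_{n-1}$-highest vector $f$ in the representation with highest vector \eqref{stv} is an admissible function (in the sense of Lemmas~\ref{l1} and \ref{l2}, i.e.\ an ordinary polynomial for integer weight or of the form \eqref{f12} for half-integer weight) of precisely the determinants listed in Theorem~\ref{le5}. Thus the ``form'' part of the statement of Theorem~\ref{lempoc} is immediate, and what remains is to translate membership in the representation with highest weight $[m_{-n},\dots,m_{-1}]$ into the power-sum conditions displayed in the theorem.

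For this, I would invoke Lemma~\ref{lfmf} directly. Since $f$ belongs to the representation, it satisfies $L_{-i,-i}f=m_{-i}f$ for $i=n,\dots,1$. By Lemma~\ref{lfmf} this is equivalent to saying that in each summand in the expansion of $f$ into products of determinants, the sum of powers of determinants of order $n-i+1$ equals $r_{-i}$ for $i=n,\dots,2$, and the sum of powers of determinants of order $n$ equals $|m_{-1}|$. Now I would sort the determinants listed in Theorem~\ref{le5} by their order. The trivial determinant $a_{-n,\dots,-n+k-1}=1$ on $Z$ (and more generally the first column block of the Gauss decomposition) contributes nothing and drops out; the remaining determinants of order $k<n$ are exactly $a_{-n,\dots,-k-1,-k}$, $a_{-n,\dots,-k-1,\pm 1}$, $a_{-n,\dots,-k-2,-1,1}$, augmented by the special $a_{-n,\dots,-3,2}$ at order $n-1$ for the series $D$. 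Lemma~\ref{lfmf} applied to these directly yields the first family of conditions. The order-$n$ determinants retained by Theorem~\ref{le5} depend on the series, and the same Cartan condition gives the series-dependent power-sum conditions $=m_{-1}$ (resp.\ $=-m_{-1}$ in the $D$ case with $m_{-1}<0$).

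The main obstacle, I expect, is to justify that the Cartan-weight counting of Lemma~\ref{lfmf}, originally formulated in terms of \emph{all} determinants on $G$, still makes sense once one restricts to the smaller list in Theorem~\ref{le5}. In the $D$ case in particular, several of the determinants from Section~\ref{oprst} have been eliminated via the relations of Lemma~\ref{soot2} (e.g.\ $\bar a_{-n,\dots,-3,-2,2}=-a_{-n,\dots,-3,-1,1}$, $\bar a_{-n,\dots,-2,-1}=0$, and $a_{-n,\dots,-2,1}=0$ for $m_{-1}\geq 0$), and one must verify that the relations used to eliminate them are homogeneous with respect to the order-grading; this is automatic for the Plücker relations of Lemma~\ref{sootpl} since all monomials involved have the same multidegree, and for the relations of Lemma~\ref{soot2} it can be checked relation by relation. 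By Lemma~\ref{sootful} there are no further relations, so the order-grading descends unambiguously and the reformulated power-sum conditions are well-defined on the reduced list.

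Finally, in the half-integer case, the function has the shape \eqref{f12}. The prefactors $(a_{-n,\dots,-2,-1})^{\pm 1/2}$ (and their $B$- and $D$-analogues) carry weight $\pm\tfrac12$ at order $n$, while the factor $a_{-n,\dots,-2,0}$, $a_{-n,\dots,-1,1}$ or $\bar a_{-n,\dots,-1,1}$ that appears in the second summand carries weight $+1$ at order $n$. Applying the Cartan condition separately to $f_1$ and $f_2$ and adding the contribution of the prefactor reproduces precisely the half-integer value of $m_{-1}$ in the stated power sum in each summand of \eqref{f12}. This completes the ``only if'' direction. The ``if'' direction is essentially tautological: by Theorem~\ref{le5} every admissible function of the listed determinants is a $g_{n-1}$-highest vector, and the power-sum conditions are exactly the condition that $f$ has the prescribed Cartan weight and hence belongs to the representation with highest vector \eqref{stv}.
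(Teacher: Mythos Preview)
Your proposal is correct and follows the same route as the paper, which does not give a separate argument but simply records Theorem~\ref{lempoc} as an immediate consequence of Theorem~\ref{le5} together with the weight count of Lemma~\ref{lfmf} and the relations of Lemmas~\ref{soot2}, \ref{sootpl}, \ref{sootful}. Your write-up is in fact more careful than the paper's: you make explicit that one must check the relations of Lemma~\ref{soot2} are homogeneous for the order-grading so that the power-sum conditions descend to the reduced list, and you spell out the half-integer case via \eqref{f12}.

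One small slip: the ``trivial'' determinant $a_{-n,\dots,-n+k-1}$ does \emph{not} drop out. In the notation of Theorem~\ref{lempoc} it is precisely the determinant $a_{-n,\dots,-k-1,-k}$ (same index set, different label for the running parameter), and it is explicitly included in the power sum equal to $r_{-k}$. Its restriction to $Z$ happens to be $1$, but on $G$ it carries the full Cartan weight and must be counted. Once you delete that sentence, your sorting of the determinants of each order matches exactly the list displayed in the statement, and the argument goes through unchanged.
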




\section{The problem of restriction $g_2\downarrow g_1$}
\label{abcd2}

\subsection{The case  $\mathfrak{sp}_4$}
\label{ac2}
Let us be given a representation of  $\mathfrak{sp}_4$ with the highest weight
$[m_{-2},m_{-1}]$.Consider the problem of restriction
$\mathfrak{sp}_4\downarrow\mathfrak{sp}_2$. In  \cite{zh2} it is shown that the problems
$\mathfrak{gl}_3\downarrow\mathfrak{gl}_1$ and
$\mathfrak{sp}_{4}\downarrow\mathfrak{sp}_{2}$ are equivalent. Thus
$\mathfrak{sp}_2$-highest vectors are encoded by integer  tableaux  that satisfy the betweeness conditions

\begin{align}
\begin{split}
\label{ggc3}
&m_{-2}\,\,\,\,\,\,\, m_{-1} \,\,\,\,\,\,\,0\\
&\,\,\,\,\,\,\,\,\,\,\,\,k_{-2}\,\,\,\,\,\,\,\,\,   k_{-1}   \,\,\,\,\,\,\,\,\,\,\,\,\,\,\,\\
&\,\,\,\,\,\,\,\,\,\,\,\,\,\,\,\,\,\,\,\,\,\,\,\,s_{-2}
\end{split}
\end{align}

Let us give a formula for a function corresponding to a tableau. In
  \cite{bb}  it is shown that to a $\mathfrak{gl}_{2}$-highest vector  encoded by a tableau \eqref{ggc3} with $k_{-2}=s_{-2}$ there corresponds a polynomial

\begin{equation}
a_{1}^{m_{-2}-k_{-2}}a_{-2}^{k_{-2}-m_{-1}}a_{-2,1}^{m_{-1}-k_{-1}}a_{-2,-1}^{k_{-2}}.
\end{equation}

To obtain a vector corresponding to the tableau \eqref{ggc3} one must apply to this monomial  $\frac{(s_{-2}-k_{-1})!}{(k_{-2}-k_{-1})!}E_{-1,-2}^{k_{-2}-s_{-2}}$, one obtains modulo multiplication by a constant the polynomial

\begin{equation}
\label{ac} a_{1}^{m_{-2}-k_{-2}}a_{-2,-1}^{k_{-2}}\sum
\frac{1}{p_{-1}!p_{-1,1}!p_{1}!p_{-2,1}!}
a_{-1}^{p_{-1
}}a_{-1,1}^{p_{-1,1}}a_{-2}^{p_{1}}a_{-2,1}^{p_{-2,1}},
\end{equation}
 where a summation is taken over all non-negative $p_{-1},p_{-1,1},p_{1},p_{-2,1}$, such that

 \begin{equation}
 p_{-1}+p_{-2}=k_{-2}-m_{-1},\,\,\,p_{-1,1}+p_{-2,1}=m_{-1}-k_{-1},\,\,\, p_{-1}+p_{-1,1}=k_{-2}-s_{-2}.
 \end{equation}

\subsection{The case $\mathfrak{o}_5$}
Let us be given  a representation of  $\mathfrak{o}_5$ with the highest weight $[m_{-2},m_{-1}]$.
Consider a problem  restriction
$\mathfrak{o}_5\downarrow\mathfrak{o}_3$. In the Zhelobenko's realization
the problem $\mathfrak{o}_{2n+1}\downarrow \mathfrak{o}_{2n-1}$  is considered in \cite{a2},
 where a relation with the restriction problem
$\mathfrak{gl}_{n+1}\downarrow\mathfrak{gl}_{n-1}$ is established.
The $\mathfrak{o}_3$-highest vectors in a
$\mathfrak{o}_5$-representation are encoded by a number $\sigma$ and
a  tableaux that satisfy the betweeness conditions whose elements
are simultaneously integers of half-integers

\begin{align}
\begin{split}
\label{dio5}
&\,\,\,\,\,\,m_{-2}\,\,\,\,\,\,\, m_{-1} \,\,\,\,\,\,\,0\\
&\sigma,\,\,\,\,\,\, \,\,\,\,k_{-2}\,\,\,\,\,\,\,\,\,   k_{-1}   \,\,\,\,\,\,\,\,\,\,\,\,\,\,\,\\
&\,\,\,\,\,\,\,\,\,\,\,\,\,\,\,\,\,\,\,\,\,\,\,\,\,\,\,\,\,\,s_{-2}
\end{split}
\end{align}
where  $\sigma=0,1$. If $k_{-1}=0$ then  $\sigma=0$.

In  \cite{a2} the realization in functions on the subgroup  $Z$ is used. If one passes to the realization  in functions on the whole group one obtains

\begin{equation}
\label{bbo}
a_{-2,0}^{\sigma}a_{1}^{m_{-2}-k_{-2}}a_{-2,-1}^{k_{-2}}\sum
\frac{1}{p_{-1}!p_{-1,1}!p_{1}!p_{-2,1}!} a_{-1}^{p_{-1
}}a_{-1,1}^{p_{-1,1}}a_{-2}^{p_{-2}}(a_{-2,1})^{p_{-2,1}},
\end{equation}
where a summation is taken over all  non-negative integers $p_{-1},p_{-1,1},p_{1},p_{-2,1}$, such that

 \begin{equation}
 p_{-1}+p_{-2}=k_{-2}-m_{-1},\,\,\,p_{-1,1}+p_{-2,1}=m_{-1}-k_{-1},\,\,\, p_{-1}+p_{-1,1}=k_{-2}-s_{-2}.
 \end{equation}

To obtain a function described in Theorem \ref{lempoc} one must make a change $a_{-2,1}\mapsto -\frac{a_{-2,0}^2}{2a_{-2,-1}}$.

Let us prove this fact without a reference to  \cite{a2}.

Consider the case of an integer highest weight. Then admissible functions are just polynomials in determinants.
Consider first the case of an integer highest weight.
   Since $f$ is an element of an irreducible representation with the highest vector  $v_0$, it can be represented as a linear combination of vectors of type $F_{-1,-2}^{p}F_{0,-1}^{q}v_0$, where $v_0=a_{-2}^{m_{-2}-m_{-1}}a_{-2,-1}^{m_{-1}}$.
One has \begin{equation}\label{f01}F_{0,-1}^2a_{-2,-1}=2F_{0,-1}a_{-2,-1}a_{-2,0}=2a_{-2,0}^2-2a_{-2,-1}a_{-2,1}=-4a_{-2,-1}a_{-2,1},\end{equation} thus in the case  $q=2q'$ one has

$$F_{-1,-2}^{p}F_{0,-1}^{2q'}v_0=const E_{-1,-2}^pE_{1,-1}^{q'}v_0.$$

The polynomial on the right satisfies the conditions of Theorem
\ref{lempoc} for the algebra $\mathfrak{gl}_3$ and the highest
weight $[m_{-2},m_{-1},0]$.  Thus there exists an isomorphism
between the span of $F_{-1,-2}^{p}F_{0,-1}^{2q'}v_0$ and the space
of representation of $\mathfrak{gl}_3$ with the highest weight
$[m_{-2},m_{-1},0]$. Using the previous Section one concludes that
in the span of $F_{-1,-2}^{p}F_{0,-1}^{2q'}v_0$ there exists a base
\eqref{bbo}, given by tableaux \eqref{dio5} where  $\sigma=0$.

In the considered case the eigenvalues of  $F_{-2,-2}$ and   $F_{-1,-1}$
correspond to eigenvalues of   $E_{-2,-2}$ and  $E_{-1,-1}-E_{1,1}$. The
later are equal to   $s_{-2}$ and  $-2(k_{-2}+k_{-1})+(m_{-2}+m_{-1})+s_{-2}$.

 In the case  $q=2q'+1$ one has

 $$F_{-1,-2}^{p}F_{0,-1}^{2q'+1}(a_{-2}^{m_{-2}-m_{-1}}a_{-2,-1}^{m_{-1}})=const (E_{-1,-2}^pE_{1,-1}^{q'}(a_{-2}^{m_{-2}-m_{-1}}a_{-2,-1}^{m_{-1}-1}))a_{-2,0}.$$

If one removes  $a_{-2,0}$ then one obtains on the right a
polynomial which satisfies the conditions of Theorem  \ref{lempoc}
for the algebra $\mathfrak{gl}_3$ and the highest weight
$[m_{-2}-1,m_{-1}-1,0]$.
 In the space of such polynomials there exists a base  \eqref{ac} encoded by an integer tableaux of type

 \begin{align}
 \begin{split}
\label{ggoc3}
&m_{-2}-1\,\,\,\,\,\,\, m_{-1}-1 \,\,\,\,\,\,\,0\\
&\,\,\,\,\,\,\,\,\,\,\,\,k_{-2}-1\,\,\,\,\,\,\,\,\,   k_{-1}-1   \,\,\,\,\,\,\,\,\,\,\,\,\,\,\,\\
&\,\,\,\,\,\,\,\,\,\,\,\,\,\,\,\,\,\,\,\,\,\,\,\,s_{-2}-1
\end{split}
\end{align}
Note that here  $k_{-1}-1\geq 0$

 Thus in the span of  $F_{-1,-2}^{p}F_{0,-1}^{2q'+1}v_0$ there exists a base \eqref{bbo} encoded by  \eqref{dio5} where  $\sigma=1$ and $k_{-1}\geq 1$.

 The eigenvalues of $F_{-2,-2}$ and   $F_{-1,-1}$ correspond to eigenvalues of  $E_{-2,-2}-1$ and  $E_{-1,-1}-E_{1,1}$. The later are equal to  $s_{-2}-1$ and $-2(k_{-2}-1+k_{-1}-1)+(m_{-2}-1+m_{-1}-1)+s_{-2}-1=-2(k_{-2}+k_{-1})+(m_{-2}+m_{-1})+s_{-2}+1$.


Thus in the case of integer highest weight there exists a base \eqref{bbo} encoded by  \eqref{dio5} where $\sigma=0,1$.

Now consider the case of half-integer highest weight. One has
 \begin{equation}\label{e38}F_{0,-1}a_{-2,-1}^{1/2}=\frac{1}{2}a_{-2,0}a_{-2,-1}^{-1/2},\,\,\,\, F_{0,-1}^{2}a_{-2,-1}^{1/2}=0.\end{equation}
The highest vector can be written as $v_0=a_{-2}^{m_{-2}-m_{-1}}a_{-2,-1}^{[m_{-1}]+1/2}$, where  $[m_{-1}]$ is the integer part.   A vector of an irreducible representation is a linear combination of vectors

$$F_{-1,-2}^{p}F_{0,-1}^{q}(a_{-2}^{m_{-2}-m_{-1}}a_{-2,-1}^{[m_{-1}]+1/2}).$$

If  $q=2q'$ then using  \eqref{f01}, \eqref{e38} we obtain that that this vector equals to
 \begin{align*}
&F_{-1,-2}^{p}F_{0,-1}^{2q'}(a_{-2}^{m_{-2}-m_{-1}}a_{-2,-1}^{[m_{-1}]+1/2})=const E_{-1,-2}^{p}E_{1,-1}^{q'}(a_{-2}^{m_{-2}-m_{-1}}a_{-2,-1}^{[m_{-1}]})a_{-2,-1}^{1/2}
,\end{align*}

thus we have a natural isomorphism between the span of  $F_{-1,-2}^{p}F_{0,-1}^{2q'}v_0$ and the space of an irreducible representation of  $\mathfrak{gl}_3$ with the highest weight
$[m_{-2}-\frac{1}{2},m_{-1}-\frac{1}{2},0]$. In this space there exists a base encoded by an integer tableau of type

\begin{align}
 \begin{split}
\label{gggoc3}
&m_{-2}-\frac{1}{2}\,\,\,\,\,\,\, m_{-1}-\frac{1}{2} \,\,\,\,\,\,\,0\\
&\,\,\,\,\,\,\,\,\,\,\,\,k_{-2}-\frac{1}{2}\,\,\,\,\,\,\,\,\,   k_{-1}-\frac{1}{2}   \,\,\,\,\,\,\,\,\,\,\,\,\,\,\,\\
&\,\,\,\,\,\,\,\,\,\,\,\,\,\,\,\,\,\,\,\,\,\,\,\,s_{-2}-\frac{1}{2}
\end{split}
\end{align}

Using the formula for the vector corresponding to a tableau in the case $\mathfrak{gl}_3$ we obtain that in the span of
$F_{-1,-2}^{p}F_{0,-1}^{2q'}v_0$  the exist a base  \eqref{ac} encoded by half-integer tableau  \eqref{dio5} with  $\sigma=0$.

The eigenvalues of  $F_{-2,-2}$ and $F_{-1,-1}$
correspond to eigenvalues of  $E_{-2,-2}-\frac{1}{2}$ and
$E_{-1,-1}-E_{1,1}-\frac{1}{2}$. One can easily check that $s_{-2}$
is the  $(-2)$-th component of the weight and
$(-1)$-th component of the weight is calculated as
$-2(k_{-2}+k_{-1})+(m_{-2}+m_{-1})+s_{-2}$.

Now let  $q=2q'+1$, one has

\begin{align*}
&F_{-1,-2}^{p}F_{0,-1}^{2q'+1}
(a_{-2}^{m_{-2}-m_{-1}}a_{-2,-1}^{[m_{-1}]+1/2})=const (
E_{-1,-2}^{p}E_{1,-1}^{q'}
a_{-2}^{m_{-2}-m_{-1}}a_{-2,-1}^{[m_{-1}]})a_{-2,0}a_{-2,-1}^{-1/2}
,\end{align*}

thus there exists an isomorphism between the span of
$F_{-1,-2}^{p}F_{0,-1}^{2q'+1}v_0$ and the space of an irreducible representation of
$\mathfrak{gl}_3$ with the highest weight
$[m_{-2}-\frac{1}{2},m_{-1}-\frac{1}{2},0]$. Thus in the span the exists a base of type \eqref{ac} encoded by  half-integer tableau \eqref{dio5} with $\sigma=1$. The eigenvalues of  $F_{-2,-2}$ and  $F_{-1,-1}$ correspond to eigenvalues of $E_{-2,-2}-\frac{1}{2}$ and
$E_{-1,-1}-E_{1,1}+\frac{1}{2}$. One can easily check that $s_{-2}$
is the  $(-2)$-th component of the weight and
$(-1)$-th component of the weight is calculated as
$-2(k_{-2}+k_{-1})+(m_{-2}+m_{-1})+s_{-2}+1$.

Thus in the case of half-nteger highest weight there exists a base \eqref{bbo} encoded by  \eqref{dio5} where $\sigma=0,1$.

The weight is calculated by the following ruler: $s_{-2}$ is a  $(-2)$-component of the weight of the  \eqref{bbo} and it's  $(-1)$-th component is equals to $-2(k_{-2}+k_{-1})+(m_{-2}+m_{-1})+s_{-2}+\sigma$.

\subsection{The case $\mathfrak{o}_4$}

Let us be given an irreducible representation of $\mathfrak{o}_4$ with the highest weight $[m_{-2},m_{-1}]$, where $m_{-2}\geq -|m_{-1}|$.


Let us construct a base using a restriction of algebras
$\mathfrak{o}_4 \downarrow \mathfrak{o}_2$. One has
$\mathfrak{o}_4=\mathfrak{o}_2\oplus\mathfrak{o}_2$, where these
copies of  $\mathfrak{o}_2$ are

 \begin{align*}
 &span<F_{-1,-2},F_{-2,-2}-F_{-1,-1},F_{-2,-1}>,\\
 &span<F_{1,-2},F_{-2,-2}+F_{-1,-1},F_{-2,1}>.
 \end{align*}


There exists the following base in the representation

\begin{align}
\begin{split}
\label{kl}
&\frac{(m_{-2}-m_{-1}-k)!}{(m_{-2}-m_{-1})!}\frac{(m_{-2}+m_{-1}-l)!}{(m_{-2}+m_{-1})!}F_{-1,-2}^kF_{1,-2}^lv_0,\\
&0\leq k\leq m_{-2}-m_{-1},\,\,\,\,\,0\leq l\leq  m_{-2}+m_{-1},\,\,\, k,l\in\mathbb{Z},
\end{split}
\end{align}

and  $v_0$ is a highest vector.  The weight of this vector equals to
\begin{equation}
\label{wesvectora}
(m_{-2}-k-l,m_{-1}+k-l)
\end{equation}

Let us give another indexation of vectors  \eqref{kl}.

\subsubsection{The case $m_{-1}\geq 0$}
 The highest vector is written as follows
\begin{equation}
\label{stmpl2} v_0=a_{-2}^{m_{-2}-m_{-1}}{a}_{-2,-1}^{m_{-1}},
\end{equation}

The operator $F_{-1,-2}$ acts onto determinants that are highest with respect to   $\mathfrak{o}_2$ as follows

\begin{align*}
&a_{-2}\mapsto a_{-1},\,\,\, a_{1}\mapsto -a_{2},
&\text{  other determinant }\mapsto 0.
\end{align*}

The operator  $F_{1,-2}$ acts onto determinants that are highest with respect to   $\mathfrak{o}_2$ as follows

\begin{align*}
&a_{-2}\mapsto a_{1},\,\,\, a_{-1}\mapsto -a_{2},\,\,\, a_{-2,-1}\mapsto -2a_{-1,1},\,\,\,a_{-1,1}\mapsto a_{1,2},
&\text{ other determinant }\mapsto 0.
\end{align*}

Thus
 \eqref{kl} modulo multiplication on a constant
 equals to




 \begin{align*}
&\sum
\frac{(-1)^{p'_{2}+p''_{2}+p_{-1,1}+p_{1,2}}}{p_{-1}!p_{1}!p'_{2}!p''_{2}!(p_{-1,1}+p_{1,2})!p_{1,2}!}
 a_{-2}^{p_{-2}}a_{-1}^{p_{-1}}a_{1}^{p_{1}}a_{2}^{p'_{2}+p''_{2}}a_{-2,-1}^{p_{-2,-1}}{a}_{-1,1}^{p_{-1,1}}{a}_{1,2}^{p_{1,2}},\\
&\text{ where the powers satisfy the equalities } \\
& p_{-2}+p_{-1}+p_{1}+p'_{2}+p''_{2}=m_{-2}-m_{-1},\,\,\,\, p_{-2,-1}+p_{-1,1}+2p_{1,2}=m_{-1}.\\
&p_{-1}+p'_{2}=k,\,\,\,\, p_{1}+p''_{2}+p_{-1}+p_{-1,1}+p_{1,2}=l
\end{align*}

The powers $p_{1}, p_{-1},p'_{2},p''_{2},p_{-2}, p_{-1,1}, p_{1,2}$ are integer and non-negative

By Lemma \ref{soot2} one can express all determinants through  $a_{-2}$, $a_{1}$, $a_{-1}$, $a_{-2,-1}$, one obtains that \eqref{kl} modulo multiplication by a constant can be rewritten as follows

\begin{equation}
a_{-2}^{m_{-2}-m_{-1}-k-l}a_{1}^la_{-1}^ka_{-2,-1}^{m_{-1}},
\end{equation}

but here
the power of   $a_{-2}$ can be negative.

Define numbers   $k_{-2},s_{-2}$ by formulas

\begin{equation}
\label{klopr1} k_{-2}=m_{-2}-k,\,\,\, s_{-2}=m_{-2}-k-l.
\end{equation}



Let us compose an integer or half-integer  tableau 

\begin{align}\begin{split}\label{gco4}
& \,\,\,m_{-2} \,\,\,\, m_{-1}   \,\,\,\, \\
&  \,\,\, \,\,\, \,\,\,\,\,k_{-2}\\
&\,\,\,\,\,\,\,\,\,\,\,\,\,s_{-2}\end{split}
\end{align}

For it's elements the following restrictions hold\footnote{Thus for $s_2$ in  \eqref{gco4}  the betweness condition doe not hold}

\begin{align}
\begin{split}
\label{nero42}
& m_{-2}\geq k_{-2}\geq m_{-1}\\
& m_{-2}\geq |s_{-2}|
\end{split}
\end{align}

Using \eqref{wesvectora}, we obtain the following statement

\begin{prop}\label{stroka041} $(-2)$-component of the weight of the vector encoded by \eqref{gco4}, equals
$s_{-2}$,  $(-1)$-component of the weight  equals
$-2k_{-2}+(m_{-2}+m_{-1})+s_{-2}$
\end{prop}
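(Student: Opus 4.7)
The plan is essentially a change of variables: substitute the indexation \eqref{klopr1} into the weight formula \eqref{wesvectora} and read off both components.

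First I would recall that the basis vector encoded by the tableau \eqref{gco4} is, by construction, a nonzero scalar multiple of $F_{-1,-2}^{k}F_{1,-2}^{l}v_{0}$, where $v_0$ is the highest vector of weight $(m_{-2},m_{-1})$, and where by \eqref{klopr1} the tableau entries correspond to $k = m_{-2}-k_{-2}$ and $l = k_{-2}-s_{-2}$. By \eqref{wesvectora} the weight of this vector is
\begin{equation*}
(m_{-2}-k-l,\; m_{-1}+k-l).
\end{equation*}

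Second, I would substitute: the $(-2)$-component is $m_{-2}-k-l = s_{-2}$ directly by definition, while the $(-1)$-component becomes
\begin{equation*}
m_{-1}+k-l \;=\; m_{-1} + (m_{-2}-k_{-2}) - (k_{-2}-s_{-2}) \;=\; -2k_{-2}+(m_{-2}+m_{-1})+s_{-2},
\end{equation*}
which is the claimed formula.

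There is no substantive obstacle here: all the structural content was already packaged into \eqref{wesvectora}, which in turn rests on the decomposition $\mathfrak{o}_{4}=\mathfrak{o}_{2}\oplus\mathfrak{o}_{2}$ together with the elementary observation that $F_{-1,-2}$ and $F_{1,-2}$ have Cartan weights $(-1,+1)$ and $(-1,-1)$ with respect to $(F_{-2,-2},F_{-1,-1})$. Given this, the proposition reduces to the one-line algebraic identity written above.
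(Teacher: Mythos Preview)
Your proof is correct and follows exactly the paper's own approach: the paper simply writes ``Using \eqref{wesvectora}, we obtain the following statement'', and your substitution of \eqref{klopr1} into \eqref{wesvectora} is precisely the intended one-line verification.
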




\subsubsection{The case $m_{-1}<0$}

The highest vector is written as

\begin{equation}
\label{stmpl11} v_0=a_{-2}^{m_{-2}-m_{-1}}\bar{a}_{-2,1}^{-m_{-1}},
\end{equation}

The  vector \eqref{kl}
modulo multiplication by a constant equals

\begin{align*}
&\sum
\frac{(-1)^{p'_{2}+p''_{2}+p_{-1,1}+p_{-1,2}}}{p'_{2}!p''_{2}!p_{-1}!(p_{-1,1}+p_{-1,2})!p_{-1,2}!}
a_{-2}^{p_{-2}}a_{-1}^{p_{-1}}a_{1}^{p_{1}}a_{2}^{p'_{2}+p''_{2}}\bar{a}_{-2,1}^{p_{-2,1}}\bar{a}_{-1,1}^{p_{-1,1}}\bar{a}_{-1,2}^{p_{-1,2}},
\end{align*}
where summation is taken over non-negative integers such that
 \begin{align*}
& p_{-2}+p_{-1}+p_{1}+p'_{2}+p''_{2}=m_{-2}-m_{-1},\,\,\,\, p_{-2,1}+p_{-1,1}+2p_{-1,2}=-m_{-1}.\\
&p_{1}+p'_{2}=k,\,\,\,\, p''_{2}+p_{-1}+p_{-1,1}+p_{-1,2}=l
\end{align*}

Due to relations from Lemma  \ref{soot2} all determinants can be expressed though   $a_{-2}$, $a_{1}$, $a_{-1}$,
$\bar{a}_{-2,1}$, as a result we obtain an expression

\begin{equation}
a_{-2}^{m_{-2}-m_{-1}-k-l}a_{1}^la_{-1}^k\bar{a}_{-2,1}^{-m_{-1}},
\end{equation}


Put

\begin{equation}
\label{klopr11} k_{-2}=m_{-2}-l,\,\,\, s_{-2}=m_{-2}-k-l.
\end{equation}

 Compose an integer or half-integer tableau

\begin{align}\begin{split}\label{gco41}
& \,\,\,m_{-2} \,\,\,\, -m_{-1}   \,\,\,\, \\
&  \,\,\, \,\,\, \,\,\,\,\,k_{-2}\\
&\,\,\,\,\,\,\,\,\,\,\,\,\,s_{-2}\end{split}
\end{align}

The following inequalities hold

\begin{align}
\begin{split}
\label{nero411}
& m_{-2}\geq k_{-2}\geq -m_{-1}\\
& m_{-2}\geq |s_{-2}|
\end{split}
\end{align}

This is the needed indexation.

Using  \eqref{wesvectora} we obtain the following statement.  Note that we give a formula not  $(-1)$,  but for the $(+1)$-th component of the weight.

\begin{prop}\label{stroka042} $(-2)$-th component of a weight of a vector encoded by \eqref{gco41} equals to
$s_{-2}$,  $(+1)$-th component of the weight equals to
$2k_{-2}-(m_{-2}-m_{-1})-s_{-2}$
\end{prop}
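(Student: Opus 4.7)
The approach mirrors the proof of Proposition \ref{stroka041} exactly. All the ingredients are already in place: the basis \eqref{kl} is indexed by integers $k,l$ with $0\le k\le m_{-2}-m_{-1}$ and $0\le l\le m_{-2}+m_{-1}$; the weight of $F_{-1,-2}^k F_{1,-2}^l v_0$ is given by \eqref{wesvectora} as $(m_{-2}-k-l,\,m_{-1}+k-l)$; and the tableau parameters $(k_{-2},s_{-2})$ are related to $(k,l)$ by \eqref{klopr11}. It therefore only remains to express the weight in terms of $(k_{-2},s_{-2})$.

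The key step is a one-line inversion of \eqref{klopr11}. From $k_{-2}=m_{-2}-l$ I get $l=m_{-2}-k_{-2}$, and substituting into $s_{-2}=m_{-2}-k-l$ I get $k=k_{-2}-s_{-2}$. Plugging these into the first weight component gives $m_{-2}-k-l=s_{-2}$, so the $(-2)$-component is $s_{-2}$ as claimed. Plugging them into the second weight component gives $m_{-1}+k-l=m_{-1}+(k_{-2}-s_{-2})-(m_{-2}-k_{-2})=2k_{-2}-(m_{-2}-m_{-1})-s_{-2}$, which is the formula stated in the proposition. Nothing more is needed beyond the already-established weight formula and the already-chosen parametrization.

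The only subtlety --- which is a bookkeeping point rather than an obstacle --- is the switch of label from the $(-1)$-axis used in Proposition \ref{stroka041} to the $(+1)$-axis used here. This reflects the fact that for $m_{-1}<0$ the highest vector \eqref{stmpl11} is built from $\bar a_{-2,1}$, involving the column index $+1$, so that reading off the second component of the weight along the $(+1)$-axis is the natural counterpart of the $(-1)$-axis reading in the $m_{-1}\ge 0$ case. One should verify that the sign conventions used to attach the label $(+1)$ are consistent with those in the definition of $\bar a_{-2,1}$ and with the action of $F_{1,-2}$ on $v_0$; once this is done, the one-line substitution above gives exactly the formula of the proposition.
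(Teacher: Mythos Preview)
Your proof is correct and follows exactly the same route as the paper: the paper's entire argument is the single line ``Using \eqref{wesvectora} we obtain the following statement,'' which is precisely the substitution of \eqref{klopr11} into \eqref{wesvectora} that you carry out explicitly. Your remark about the relabelling from the $(-1)$-axis to the $(+1)$-axis also matches the paper's brief comment that ``in the considered case $1$ and $-1$ change their roles.''
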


One can say that in the considered case $1$ and $-1$ change their
roles.

\section{The extension of restriction problems $\mathfrak{gl}_{3}\downarrow\mathfrak{gl}_{1}\supset\mathfrak{gl}_{2}\downarrow\mathfrak{gl}_{0}$}

In this Section we investigate a relation of restriction problems
$\mathfrak{gl}_{2}\downarrow\mathfrak{gl}_{0}$ and
$\mathfrak{gl}_{3}\downarrow\mathfrak{gl}_{1}$.  This relation describes  the fact that the Gelfand-Tsetlin tableaux  that appears in the problem $\mathfrak{gl}_{3}\downarrow\mathfrak{gl}_{1}$
are obtained from tableaux for the problem
$\mathfrak{gl}_{2}\downarrow\mathfrak{gl}_{0}$ by extension to the left.

Thus consider the problem of restriction
%
%
%
%
%
%
%
$\mathfrak{gl}_{3}\downarrow\mathfrak{gl}_{1}$  and consider a tableau \begin{align}\begin{split}\label{gc3}
&m_{-2}\,\,\,\,\,\,\,m_{-1}\,\,\,\,\,\,\,0\\
&\,\,\,\,\,\,\,\,\,\,\,\,k_{-2}\,\,\,\,\,\,k_{-1}\,\,\,\,\,\,\,\,\,\,\,\,\,\,\,\\
&\,\,\,\,\,\,\,\,\,\,\,\,\,\,\,\,\,\,\,\,\,\,\,\,s_{-2}
\end{split}
\end{align}

According to the Section \ref{ac2}, to this tableau there corresponds a polynomial

\begin{equation*}
 a_{1}^{m_{-2}-k_{-2}}a_{-2,-1}^{k_{-1}}\sum
\frac{1}{p_{-1}!p_{-1,1}!p_{1}!p_{-2,1}!}
a_{-1}^{p_{-1
}}a_{-1,1}^{p_{-1,1}}a_{-2}^{p_{1}}a_{-2,1}^{p_{-2,1}},
\end{equation*}
where a summation is taken over all non-negative integers $p_{-1},p_{-1,1},p_{1},p_{-2,1}$, such that

 \begin{equation}
 p_{-1}+p_{-2}=k_{-2}-m_{-1},\,\,\,p_{-1,1}+p_{-2,1}=m_{-1}-k_{-1},\,\,\, p_{-1}+p_{-1,1}=k_{-2}-s_{-2}.
 \end{equation}

\begin{lem}
Take a span of products of type

\begin{align*}
&a_{-2}^{p_{-2}}a_{-1}^{p_{-1}}a_{1}^{p_{1}}a_{-2,-1}^{p_{-2,-1}}a_{-2,1}^{p_{-2,1}}a_{-1,1}^{p_{-1,1}},\\
&p_{-2}+p_{-1}+p_{1}=m_{-2}-m_{-1},\,\,\,\,\,p_{-2,-1}+p_{-2,1}+p_{-1,1}=m_{-1},
\end{align*}
with a fixed sum $p_{-2,1}+p_{-1,1}$, introduce a number $k_{-1}$ by a formula $p_{-2,1}+p_{-1,1}=m_{-1}-k_{-1}$. Then in this span there exists a base, indexed by tableaux

\begin{align*}
&m_{-2}\,\,\,\,\,\,\,\,\,m_{-1}\\
&\,\,\,\,\,\,\,\,\,\,\,\,k_{-2}\,\,\,\,\,\,   k_{-1}   \,\,\,\,\,\,\,\,\,\,\,\,\,\,\,\\
&\,\,\,\,\,\,\,\,\,\,\,\,\,\,\,\,\,\,\,\,\,\,\,\,s_{-2}
\end{align*}

\end{lem}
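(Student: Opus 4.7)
The plan is to produce, for each admissible tableau with $k_{-1}$ fixed, an explicit vector in the span, verify linear independence, and then prove the spanning property via a $\mathfrak{gl}_2$-decomposition argument. For each pair $(k_{-2}, s_{-2})$ satisfying the betweenness conditions $m_{-2}\geq k_{-2}\geq m_{-1}$ and $k_{-2}\geq s_{-2}\geq k_{-1}$, I define (using the formula of Section \ref{ac2}, with the exponent of $a_{-2,-1}$ read as $k_{-1}$) the polynomial
\[
f_{k_{-2}, s_{-2}} = a_{1}^{m_{-2}-k_{-2}}\, a_{-2,-1}^{k_{-1}} \sum \frac{1}{p_{-1}!\, p_{-1,1}!\, p_{-2}!\, p_{-2,1}!}\; a_{-1}^{p_{-1}} a_{-1,1}^{p_{-1,1}} a_{-2}^{p_{-2}} a_{-2,1}^{p_{-2,1}},
\]
summed over tuples with $p_{-1}+p_{-2}=k_{-2}-m_{-1}$, $p_{-1,1}+p_{-2,1}=m_{-1}-k_{-1}$, and $p_{-1}+p_{-1,1}=k_{-2}-s_{-2}$. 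Every monomial appearing in the sum has $p_{-2,-1}=k_{-1}$ and the two other prescribed total degrees, so $f_{k_{-2}, s_{-2}}$ belongs to the span in question.

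Linear independence is immediate: as the indices $(k_{-1}, k_{-2}, s_{-2})$ range over all admissible triples, the $f_{k_{-2}, s_{-2}}$ form the full Gelfand-Tsetlin basis of the irreducible $\mathfrak{gl}_3$-representation of highest weight $[m_{-2}, m_{-1}, 0]$ recalled in Section \ref{ac2}, so any subset of them remains independent.

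For the spanning property, the key observation is that the span is a $\mathfrak{gl}_2$-submodule, where $\mathfrak{gl}_2=\langle E_{-2,-2}, E_{-2,-1}, E_{-1,-2}, E_{-1,-1}\rangle$. Indeed, both $E_{-2,-1}$ and $E_{-1,-2}$ annihilate $a_{-2,-1}$ (substituting one of its column indices for the other yields a determinant with a repeated column), so by the Leibniz rule they commute with multiplication by $a_{-2,-1}^{k_{-1}}$ and preserve $p_{-2,-1}$. The polynomials $f_{k_{-2},k_{-2}} = a_{1}^{m_{-2}-k_{-2}} a_{-2,-1}^{k_{-1}} a_{-2}^{k_{-2}-m_{-1}} a_{-2,1}^{m_{-1}-k_{-1}}$ are directly checked to be $\mathfrak{gl}_2$-highest of weight $(k_{-2}, k_{-1})$, and applying $E_{-1,-2}^{k_{-2}-s_{-2}}$ to them reproduces $f_{k_{-2}, s_{-2}}$ for general $s_{-2}$ up to a combinatorial prefactor. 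Hence, provided these are all the $\mathfrak{gl}_2$-highest vectors of the span, the irreducible $\mathfrak{gl}_2$-subrepresentations they generate decompose the span exactly as asserted.

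The hard part will be that exhaustiveness check, which reduces to solving $E_{-2,-1} v = 0$ for $v$ ranging over the span. I would attack it by direct linear algebra, using the single degree-$(1,1)$ Pl\"ucker identity $a_{-2}a_{-1,1} - a_{-1}a_{-2,1} + a_{1}a_{-2,-1} = 0$ of Lemma \ref{sootpl} to normalise the generating monomials into an irredundant set, after which a short calculation shows that the solution space of $E_{-2,-1} v=0$ has precisely $\{f_{k_{-2},k_{-2}} : m_{-1}\leq k_{-2}\leq m_{-2}\}$ as a basis.
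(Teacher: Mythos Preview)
There is a genuine gap at the exhaustiveness step, and it cannot be filled because the lemma as literally stated is false. Take $m_{-2}=2$, $m_{-1}=1$, $k_{-1}=0$. The span is generated by the six monomials $a_i\,a_{j,1}$ with $i\in\{-2,-1,1\}$, $j\in\{-2,-1\}$; since $a_{-2},a_{-1},a_1,a_{-2,1},a_{-1,1}$ are algebraically independent on $GL_3$ (the unique Pl\"ucker relation of Lemma~\ref{sootpl} carries a nonzero $a_{-2,-1}$ term and hence imposes no relation among these five), the six monomials are linearly independent and the span is $6$-dimensional. But only five tableaux have $k_{-1}=0$. Your computation of $\ker E_{-2,-1}$ would in fact produce three highest vectors, not two: besides $f_{2,2}=a_{-2}a_{-2,1}$ and $f_{1,1}=a_1a_{-2,1}$ there is $a_{-2}a_{-1,1}-a_{-1}a_{-2,1}$, which Pl\"ucker rewrites as $-a_1a_{-2,-1}$, the Gelfand--Tsetlin vector with $(k_{-2},k_{-1},s_{-2})=(1,1,1)$. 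Thus the span for $k_{-1}=0$ strictly contains the span of GT vectors with $k_{-1}=0$, and your claimed basis of $\ker E_{-2,-1}$ is incomplete.

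The paper offers no proof either; it simply asserts after the lemma that the span coincides with the span of GT basis vectors having the fixed right-corner $\mathfrak{gl}_2$-tableau, and then passes to Lemma~\ref{lemoch}. What your argument \emph{does} establish correctly---that the tableau vectors $f_{k_{-2},s_{-2}}$ lie in the span and are linearly independent---is all that is genuinely used downstream in Section~\ref{abcdn}, where the map~\eqref{mainsoot} only needs to land in this space and hit the tableau vectors. A precise formulation would either replace ``span of products'' by ``span of the GT vectors with the given $k_{-1}$'', or work in the quotient by the subspace coming from monomials with $p_{-2,-1}>k_{-1}$.
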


The space described in Lemma  is actually a span of $\mathfrak{gl}_3$-tableaux with a
fixed  $\mathfrak{gl}_{2}$-tableau in the right upper corner. Thus
we call the elements of this space solutions of {\it the problem of
extension }
$\mathfrak{gl}_{3}\downarrow\mathfrak{gl}_{1}\supset\mathfrak{gl}_{2}\downarrow\mathfrak{gl}_{0}$
with given $m_{-2},m_{-1},k_{-1}$.

 Since for the considered monomials the power  $p_{-2,-1}=k_{-1}$  is fixed we can divide all the polynomials by   $a_{-2,-1}^{p_{-2,-1}}$. We obtain the following statement

\begin{lem}

\label{lemoch}
Take a span of products of type

\begin{align*}
&a_{-2}^{p_{-2}}a_{-1}^{p_{-1}}a_{1}^{p_{1}}a_{-2,1}^{p_{-2,1}}a_{-1,1}^{p_{-1,1}},\\
&p_{-2}+p_{-1}+p_{1}=m_{-2}-m_{-1},\,\,\,\,\,p_{-2,1}+p_{-1,1}=m_{-1}-k_{-1},
\end{align*}
 Then in this span there exists a base, indexed by tableaux

\begin{align*}
&m_{-2}\,\,\,\,\,\,\,\,\,m_{-1}\\
&\,\,\,\,\,\,\,\,\,\,\,\,k_{-2}\,\,\,\,\,\,   k_{-1}   \,\,\,\,\,\,\,\,\,\,\,\,\,\,\,\\
&\,\,\,\,\,\,\,\,\,\,\,\,\,\,\,\,\,\,\,\,\,\,\,\,s_{-2}
\end{align*}

\end{lem}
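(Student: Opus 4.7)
The plan is to reduce Lemma~\ref{lemoch} directly to the preceding lemma via the multiplication map $\Phi\colon f\mapsto a_{-2,-1}^{k_{-1}}\cdot f$. The preceding lemma describes a basis in a span of monomials whose $a_{-2,-1}$-exponent is constantly $k_{-1}$, whereas the current span is the same collection of monomials with this fixed factor stripped off, so $\Phi$ sends the current span onto the preceding one and the basis transports across.

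The two steps of the proof are as follows. First I would check that $\Phi$ is a bijection of spanning sets: every typical monomial $a_{-2}^{p_{-2}}a_{-1}^{p_{-1}}a_{1}^{p_{1}}a_{-2,1}^{p_{-2,1}}a_{-1,1}^{p_{-1,1}}$ of the span in Lemma~\ref{lemoch} is sent by $\Phi$ to the typical monomial of the preceding lemma's span with exponent tuple $(p_{-2},p_{-1},p_{1},k_{-1},p_{-2,1},p_{-1,1})$, and conversely every such monomial in the target is of this form. Second I would verify that $\Phi$ is injective on the ambient coordinate ring: since $GL_3$ is irreducible and $a_{-2,-1}$ is a nonzero regular function on it, the coordinate ring is a domain in which $a_{-2,-1}$ is a non-zero-divisor, hence so is any of its powers. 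Combining these observations, $\Phi$ is a linear isomorphism between the two spans, and the basis furnished by the preceding lemma pulls back through $\Phi^{-1}$ to a basis of the current span indexed by the same tableaux.

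The only subtlety is to check that the Plucker relations of Lemma~\ref{sootpl} among $a_{-2},a_{-1},a_{1},a_{-2,-1},a_{-2,1},a_{-1,1}$ do not spoil the argument. Since $\Phi$ is simply multiplication by a fixed element of the coordinate ring, it respects every polynomial identity automatically: any linear dependence among images of spanning monomials pulls back to a linear dependence in the source by the injectivity of $\Phi$, and conversely the basis property in the target transports faithfully to the source. This is the extent of what has to be verified, and the lemma then follows at once.
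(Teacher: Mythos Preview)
Your argument is correct and is exactly the paper's own approach: the paper simply observes that in the preceding lemma the exponent $p_{-2,-1}=k_{-1}$ is fixed, so one may divide all polynomials by $a_{-2,-1}^{k_{-1}}$ to obtain Lemma~\ref{lemoch}. Your map $\Phi$ is the inverse of this division, and the extra care you take about injectivity and the Plucker relations just makes explicit what the paper leaves implicit.
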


 The space in which a base in Lemma \ref{lemoch} is constructed is defined  not by numbers  $m_{-2}$, $m_{-1}$, $k_{-1}$, but by differences  $m_{-2}-m_{-1}$, $m_{-1}-k_{-1}$, which must be integer and non-negative. That is why we allow below the numbers  $m_{-2}$, $m_{-1}$, $k_{-1}$ to be simultaneously half-integer.

\section{The extension problems $g_{n}\downarrow g_{n-1}\supset g_{2}\downarrow g_{1}$}

\label{abcdn}
For simplicity put  $n=3$. The case of an arbitrary  $n$  is discussed at the end of the Section.
 The space of $g_2$-highest vectors in the problem  $g_3\downarrow g_2$ is spanned by products of type

\begin{align}
\label{rein}
&f=a_{-3}^{p_{-3}}a_{-1}^{p_{-1}}a_{1}^{p_1}a_{-3,-2}^{p_{-3,-2}}a_{-3,1}^{p_{-3,1}}a_{-1,1}^{p_{-1,1}}\cdot f_2,
\end{align}

where

\begin{align*}
&f_2=a_{-3,-2,-1}^{p_{-3,-2,-1}}a_{-3,-1,1}^{p_{-3,-1,1}}a_{-3,-2,1}^{p_{-3,-2,1}}\text{  для серий $A$,$C$ },\\
&f_2=a_{-3,-2,-1}^{p_{-3,-2,-1}}a_{-3,-1,1}^{p_{-3,-1,1}}a_{-3,-2,0}^{p_{-3,-2,0}}\text{  для серии $B$ },\\
&f_2=a_{-3,2}^{p_{-3,2}}\bar{a}_{-3,-2,-1}^{p_{-3,-2,-1}}\bar{a}_{-3,-1,1}^{p_{-3,-1,1}}
\bar{a}_{-3,-1,2}^{p_{-3,-1,2}}\text{  для серии $D$ и }m_{-1}<0\\
&f_2=a_{-3,2}^{p_{-3,2}}a_{-3,-2,-1}^{p_{-3,-2,-1}}a_{-3,-1,1}^{p_{-3,-1,1}}
a_{-3,1,2}^{p_{-3,1,2}}\text{  для серии $D$ и }m_{-1}\geq 0
\end{align*}

The conditions for exponents are written in the Theorem
\ref{lempoc}. Below we write them explicitly.

\subsection{The case of series  $A$, $C$, $B$}
\label{acde}
In the case of series  $A$, $C$ the powers are non-negative and  satisfy

\begin{align*}
&p_{-3}+p_{-1}+p_{1}=m_{-3}-m_{-2},\,\,\,\, p_{-3,-2}+p_{-3,-1}+p_{-3,1}+p_{-1,1}=m_{-2}-m_{-1},\\
& p_{-3,-2,-1}+p_{-3,-1,1}+p_{-3,-2,1}=m_{-1},
\end{align*}

and in the case   $B$ the powers are non-negative and  satisfy

\begin{align*}
&p_{-3}+p_{-1}+p_{1}=m_{-3}-m_{-2},\,\,\,\, p_{-3,-2}+p_{-3,-1}+p_{-3,1}+p_{-1,1}=m_{-2}-m_{-1},\\
& p_{-3,-2,-1}+p_{-3,-1,1}+p_{-3,-2,0}=m_{-1},
\end{align*}

Consider solutions such that

\begin{align}
\begin{split}
\label{usl}
&p_{-3,1}+p_{-1,1}=m_{-2}-k_{-2},\\
&\text{$p_{-3,-1},p_{-3,-2},p_{-3,-2,-1},p_{-3,-1,1},p_{-3,-2,1}$ where  $p_{-3,-2,0}$  is fixed }
\end{split}
\end{align}

For the series   $A$, $ B$, $C$ the mapping of   \eqref{rein}  to a polynomial that is a solution of the extension
$\mathfrak{gl}_{3}\downarrow\mathfrak{gl}_{1}\supset\mathfrak{gl}_{2}\downarrow\mathfrak{gl}_{0}$ with numbers $m_{-3}, m_{-2},k_{-2}$ by the ruler


\begin{align}
\begin{split}
\label{mainsoot} &f\mapsto
a_{-2}^{p_{-3}}a_1^{p_{1}}a_{-1}^{p_{-2}}a_{-2,1}^{p_{-3,1}}a_{-1,1}^{p_{-1,1}}
\end{split}
\end{align}

gives a well-defined mapping from the space of polynomials \eqref{rein},
that satisfy \eqref{usl}, into the space of solutions of the problem
$\mathfrak{gl}_{3}\downarrow\mathfrak{gl}_{1}\supset\mathfrak{gl}_{2}\downarrow\mathfrak{gl}_{0}$.
To prove this we need to check that  \eqref{mainsoot} respects relations between determinants. The problem is that in $f$  on the left side of the \eqref{mainsoot} determinant for the problem
 $g_3\downarrow g_{2}$ occur and on the right side of  \eqref{mainsoot}  determinants for the problem $\mathfrak{gl}_{3}\downarrow\mathfrak{gl}_{1}$ occur.
 The relations between determinants are written in Lemma  \ref{sootful}.  It says that the the determinants that are not mapped into unit under \eqref{mainsoot}
 satisfy only Plucker relations, which take place for determinant on both sides.

One sees that \eqref{mainsoot}    maps isomorphically into the space of admissible linear combinations of \eqref{rein},
 that satisfy \eqref{usl}, to the space of solutions of the extension
 $\mathfrak{gl}_{3}\downarrow\mathfrak{gl}_{1}\supset\mathfrak{gl}_{2}\downarrow\mathfrak{gl}_{0}$ with $m_{-3},m_{-2},k_{-2}$. Thus we obtain.

 \begin{lem}
 \label{lemabc}
 In the space of solutions \eqref{rein}, that satisfy  \eqref{usl} there exists a base indexed by an integer or half-integer tableau

\begin{align}
\begin{split}
\label{aaa}
&m_{-3}\,\,\,\,\,\,\,\,\,m_{-2}\\
&\,\,\,\,\,\,\,\,\,\,\,\,k_{-3}\,\,\,\,\,\,   k_{-2}   \,\,\,\,\,\,\,\,\,\,\,\,\,\,\,\\
&\,\,\,\,\,\,\,\,\,\,\,\,\,\,\,\,\,\,\,\,\,\,\,\,s_{-3}
\end{split}
\end{align}
 \end{lem}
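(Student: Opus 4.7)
The plan is to reduce the statement to Lemma \ref{lemoch} via the explicit correspondence \eqref{mainsoot}. Denote by $V$ the space of admissible linear combinations of monomials \eqref{rein} satisfying the side conditions \eqref{usl}, and by $W$ the space appearing in Lemma \ref{lemoch} but with parameters $m_{-3},m_{-2},k_{-2}$ substituted in place of $m_{-2},m_{-1},k_{-1}$. I will show that \eqref{mainsoot} defines an isomorphism $V\to W$ and then pull back the base from Lemma \ref{lemoch}.

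First I would verify that \eqref{mainsoot} descends to a well-defined linear map on $V$. The conditions \eqref{usl} pin down the powers $p_{-3,-2}, p_{-3,-1}$ as well as every exponent appearing in $f_2$, so that each element of $V$ shares a fixed factor $a_{-3,-2}^{p_{-3,-2}} a_{-3,-1}^{p_{-3,-1}} f_2$ that is dropped by the map; this is exactly the same trick used to pass from the full $\mathfrak{gl}_3$ base to Lemma \ref{lemoch}, where $a_{-2,-1}^{p_{-2,-1}}$ was divided out. The remaining free powers $p_{-3},p_{-1},p_1,p_{-3,1},p_{-1,1}$ are constrained by $p_{-3}+p_{-1}+p_1=m_{-3}-m_{-2}$ and $p_{-3,1}+p_{-1,1}=m_{-2}-k_{-2}$, which match term-by-term the constraints imposed in Lemma \ref{lemoch} after the parameter shift.

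Next I would argue that \eqref{mainsoot} is bijective. By Lemma \ref{sootful} the only identifications among the free generators $a_{-3},a_{-1},a_1,a_{-3,1},a_{-1,1}$ are the Plücker relations, and these translate under \eqref{mainsoot} into the Plücker relations among $a_{-2},a_{-1},a_1,a_{-2,1},a_{-1,1}$, which are precisely the relations active on the target side. The extra series-specific relations from Lemma \ref{soot2} all involve the ``top'' determinants absorbed into $f_2$; since these are constant along $V$, they impose no additional identifications on the image. Surjectivity is immediate, because every monomial in $W$ lifts uniquely to an element of $V$ with the prescribed fixed powers. Pulling back the base furnished by Lemma \ref{lemoch} along this isomorphism produces a base of $V$ indexed by tableaux of shape \eqref{aaa}; the tolerance for half-integer entries is automatic because admissibility requires only that the differences (i.e.\ the exponents) be non-negative integers.

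The main obstacle is the verification of well-definedness and injectivity in the previous step, since this is where the argument becomes series-dependent. In each of the cases $A$, $B$, $C$ one must confirm, using the explicit lists of generators after Theorem \ref{le5} and the relations of Lemma \ref{soot2}, that no series-specific identity collapses two distinct monomials among the free generators; once that is checked uniformly the rest of the argument is formal.
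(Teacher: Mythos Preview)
Your proposal is correct and follows essentially the same route as the paper: reduce to Lemma \ref{lemoch} by showing that the correspondence \eqref{mainsoot} is a well-defined isomorphism (because the determinants not sent to $1$ satisfy only Pl\"ucker relations, which hold on both sides), and then transport the base. Your write-up is in fact more explicit than the paper's about why the series-specific relations of Lemma \ref{soot2} cause no trouble, but the underlying argument is the same.
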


If one does not put conditions \eqref{usl} but allows these indices
to take all possible values one obtains

 \begin{cor}
 \label{cabc}
 In the space of solutions of  \eqref{rein},
 there exists a base indexed by an integer or half-integer tableau of type

\begin{align}
\begin{split}
\label{bbd}
&m_{-3}\,\,\,\,\,\,\,\\
&\,\,\,\,\,\,\,\,\,\,\,\,k_{-3}\,\,\,\,\,\,   \mathcal{D}   \,\,\,\,\,\,\,\,\,\,\,\,\,\,\,\\
&\,\,\,\,\,\,\,\,\,\,\,\,\,\,\,\,\,\,\,\,\,\,\,\,s_{-3}
\end{split}
\end{align}
 where  $\mathcal{D}$ is a   $g_2$-tableau with the highest weight $[m_{-2},m_{-1},0]$ in the case of the series  $C$, $B$ and    $[m_{-2},m_{-1},0]$ in the case of the series $A$.
 \end{cor}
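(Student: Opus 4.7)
The plan is to derive the corollary from Lemma \ref{lemabc} by varying the indices that condition \eqref{usl} held fixed, and packaging the resulting direct sum decomposition in terms of tableaux \eqref{bbd}. Condition \eqref{usl} fixes the value of $k_{-2}$ (through $p_{-3,1} + p_{-1,1} = m_{-2} - k_{-2}$) together with the indices $p_{-3,-1}, p_{-3,-2}, p_{-3,-2,-1}, p_{-3,-1,1}, p_{-3,-2,1}$ (replacing the last by $p_{-3,-2,0}$ in series $B$). The full space \eqref{rein} decomposes as a direct sum over all admissible tuples of these frozen indices. Linear independence of the summands follows from Lemma \ref{sootful}: the only relations among the determinants appearing in \eqref{rein} are the Pl\"ucker identities, and these preserve each multi-index that enters the decomposition.

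Next I would set up a combinatorial bijection between admissible tuples of frozen indices and $g_2$-tableaux $\mathcal{D}$ of the type prescribed for the given series in Section \ref{abcd2}. The entry $k_{-2}$ is immediate from $p_{-3,1} + p_{-1,1} = m_{-2} - k_{-2}$, while the remaining entries $k_{-1}, s_{-2}$ (and the parameter $\sigma$ in series $B$) are read off from the remaining frozen indices by comparison with the explicit formulas \eqref{ac} and \eqref{bbo}. The nonnegativity of the frozen indices, together with the marginal sum constraints from Section \ref{acde}, translates exactly into the betweenness inequalities that define a valid $\mathcal{D}$.

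Finally I would invoke Lemma \ref{lemabc} summand by summand. Within a single summand $m_{-3}, m_{-2}, k_{-2}$ are fixed, so the lemma produces a base indexed by tableaux \eqref{aaa} in which only $k_{-3}$ and $s_{-3}$ vary. Assembling these bases as $\mathcal{D}$ ranges over all admissible $g_2$-tableaux yields a base of the full solution space indexed by the joint data $(k_{-3}, s_{-3}; \mathcal{D})$, which is precisely the enlarged tableau \eqref{bbd}.

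The main obstacle will be the bookkeeping in the second step: series $A$, $B$, $C$ each require a slightly different identification, and in series $B$ one must check that the parity of $p_{-3,-2,0}$ reproduces the extra parameter $\sigma$ of the $\mathfrak{o}_5$-tableau from Section \ref{abcd2}. Once these bijections are in place, the rest of the argument is a transparent assembly of the bases provided by Lemma \ref{lemabc}, and the weight labels for series $B$ and $C$ vs.\ $A$ appearing in the statement distinguish themselves automatically through the shape of $\mathcal{D}$.
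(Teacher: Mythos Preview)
Your proposal has a genuine gap in the second step. You claim that the summands indexed by the frozen tuples $(k_{-2};p_{-3,-1},p_{-3,-2},p_{-3,-2,-1},p_{-3,-1,1},p_{-3,-2,1})$ are linearly independent, arguing that the Pl\"ucker relations ``preserve each multi-index that enters the decomposition.'' They do not. A relation such as
\[
a_{-3,-2}\,a_{-3,-1,1}-a_{-3,-1}\,a_{-3,-2,1}+a_{-3,1}\,a_{-3,-2,-1}=0
\]
mixes monomials with different values of the frozen exponents $p_{-3,-2},p_{-3,-1},p_{-3,-2,-1},p_{-3,-1,1},p_{-3,-2,1}$. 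Hence the pieces you obtain from Lemma~\ref{lemabc} for distinct frozen tuples are not independent, and there is no combinatorial bijection between admissible frozen tuples and $g_2$-tableaux $\mathcal{D}$. A small dimension check already shows this: for $m_{-2}=2$, $m_{-1}=1$ (series $C$) there are $9$ frozen tuples but only $8$ tableaux \eqref{ggc3}.

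What the paper actually does is different and is precisely designed to handle this overcounting. To each frozen tuple it associates the monomial \eqref{bbb} in new variables $b_{\bullet}$; the span of these monomials, modulo the same Pl\"ucker relations, is the space of $g_1$-highest vectors in the $g_2$-representation with highest weight $[m_{-2},m_{-1}]$ (resp.\ $[m_{-2},m_{-1},0]$). Section~\ref{abcd2} furnishes a basis of that space indexed by $g_2$-tableaux $\mathcal{D}$, each basis vector being a specific linear combination of the monomials \eqref{bbb}. The vector attached to \eqref{bbd} is then the \emph{same} linear combination of the vectors \eqref{aaa} produced by Lemma~\ref{lemabc}. So the passage from \eqref{aaa} to \eqref{bbd} is a change of basis in the ``frozen'' direction, not a bijection; your proposed bijection would have to be replaced by this linear-combination construction for the argument to go through.
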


  This tableau is a span of  \eqref{aaa} with fixed $m_{-3},k_{-3},s_{-3}$, but different indices $p_{-3,-1},p_{-3,-2}...$ which were fixed before.
   More precise put   $q_{1}=p_{-3,1}+p_{-1,1}$, $q_{-1}=p_{-3,-1}$, $q_{-2}=p_{-3,-2}$, $q_{-2,-1}=p_{-3,-2,-1}$, $q_{-1,1}=p_{-3,-1,1}$, $q_{-2,1}=p_{-3,-2,1}$ in the case of the series  $A$, $C$ or   $q_{-2,0}=p_{-3,-2,0}$ in the case of the series  $B$. To the vector  \eqref{aaa} associate a  monomial
\begin{align}
  \begin{split}
  \label{bbb}
  &b_{1}^{q_{1}}b_{-1}^{q_{-1}}b_{-2}^{q_{-2}}b_{-2,-1}^{q_{-2,-1}}b_{-1,1}^{q_{-1,1}}b_{-2,1}^{q_{-2,1}}\text{ in the cases  $A$, $C$},\\
  &b_{1}^{q_{1}}b_{-1}^{q_{-1}}b_{-2}^{q_{-2}}b_{-2,-1}^{q_{-2,-1}}b_{-1,1}^{q_{-1,1}}b_{-2,0}^{q_{-2,0}}\text{  in the case  $B$},
  \end{split}
  \end{align}

In the span of such monomial there exist a base given by $g_{2}$-tableaux   $\mathcal{D}$.  Then the vector \eqref{bbd} is a linear combination of \eqref{aaa} with the same coefficients as we take for
 monomials \eqref{bbb} to obtain a polynomial corresponding to a tableau.

\subsection{The case  $D$.}

\subsubsection{The case $m_{-1}\geq 0$}

In the considered case the following inequalities hold

\begin{align}
\begin{split}
\label{reshd}
&p_{-3}+p_{-1}+p_{1}=m_{-3}-m_{-2},\,\,\,\, p_{-3,-2}+p_{-3,-1}+p_{-3,1}+p_{-1,1}+p_{-3,2}=m_{-2}-m_{-1},\\
& p_{-3,-2,-1}+p_{-3,-1,1}+2p_{-3,1,2}=m_{-1}.
\end{split}
\end{align}

All power  are non-negative.

There exist relations between determinants. 
Let us remove dependent determinants.
 By Lemma \ref{soot2}  we can remove $a_{-3,2}, a_{-3,-1,1}, a_{-3,1,2}$,  then the powers of
$a_{-3,-2}$, $a_{-3,-1}$, $a_{-3,1}$, $a_{-3,-2,-1}$ are changed by the following rulers

\begin{enumerate}
\item The power of  $a_{-3,-2}$ is changed to  $q_{-3,-2}=p_{-3,-2}-p_{-3,2}-p_{-3,-1,1}-2p_{-3,-1,2}$.
\item  The power of $a_{-3,-1}$  is changed to  $q_{-3,-1}=p_{-3,-1}+p_{-3,2}$.
\item The power of   $a_{-3,1}$  is changed to  $q_{-3,1}=p_{-3,1}+p_{-3,2}+p_{-3,-1,1}+2p_{-3,-1,2}$.
\item The power of   $a_{-3,-2,-1}$  is changed to  $q_{-3,-2,-1}=m_{-1}$.
\end{enumerate}

Note that

$$
q_{-3,-2}+q_{-3,-1}+q_{-3,1}=m_{-2}-m_{-1},
$$

 the power $q_{-3,-2}$ can become negative, but $q_{-3,-1}$, $q_{-3,1}$ are positive.

Consider solutions such that

\begin{align}
\begin{split}
\label{usld}
&q_{-3,-1}+p_{-1,1}=m_{-2}-k_{-2},\\
&q_{-3,1},q_{-3,-2},q_{-3,-2,-1},\text{  are fixed}
\end{split}
\end{align}

From one hand one has
$$
m_{-2}-k_{-2}=q_{-3,-1}+p_{-1,1}\geq 0,\,\,\,\,\Rightarrow m_{-2}\leq k_{-2},
$$

from the other hand one has

$$
k_{-2}=m_{-2}-p_{-3,-1}-p_{-3,2}-p_{-1,1}=m_{-1}+p_{-3,-2}+p_{-3,1}\geq
m_{-1}.
$$
Thus $k_{-2}$ satisfies the inequalities

$$m_{-2}\geq k_{-2}\geq m_{-1}.$$

 The same arguments as in the previous Section show that
the mapping


\begin{align}
\begin{split}
\label{mainsootd1}
& f\mapsto a_1^{p_{1}}a_{-1}^{p_{-1}}a_{-2}^{p_{-3}}a_{-1,1}^{p_{-1,1}}a_{-2,1}^{q_{-3,1}}
\end{split}
\end{align}

sends isomorphically the space of functions \eqref{reshd}  that satisfy \eqref{usld}, into the space
 of solutions of the extension $\mathfrak{gl}_{3}\downarrow\mathfrak{gl}_{1}\supset\mathfrak{gl}_{2}\downarrow\mathfrak{gl}_{0}$
  with $m_{-3},m_{-2},k_{-2}$. Thus one obtains the following statement.

\begin{lem}
 In the case $m_{-1}\geq 0$  in the space of functions \eqref{rein}, that satisfy  \eqref{usld} there exists a base indexed by integer or half-integer tableaux

\begin{align}
\begin{split}
\label{ddd}
&m_{-3}\,\,\,\,\,\,\,\,\,m_{-2}\\
&\,\,\,\,\,\,\,\,\,\,\,\,k_{-3}\,\,\,\,\,\,   k_{-2}   \,\,\,\,\,\,\,\,\,\,\,\,\,\,\,\\
&\,\,\,\,\,\,\,\,\,\,\,\,\,\,\,\,\,\,\,\,\,\,\,\,s_{-3}
\end{split}
\end{align}
 \end{lem}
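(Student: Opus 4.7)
The plan is to mirror the argument of Section \ref{acde}, translating the space of $g_2$-highest vectors inside the $D$-series representation with $m_{-1}\geq 0$ into the setting of the already-solved extension problem $\mathfrak{gl}_{3}\downarrow\mathfrak{gl}_{1}\supset\mathfrak{gl}_{2}\downarrow\mathfrak{gl}_{0}$ via the map \eqref{mainsootd1}. The data for the statement has already been prepared in the excerpt: the reduction via Lemma \ref{soot2} has been carried out, the new exponents $q_{-3,-2},q_{-3,-1},q_{-3,1},q_{-3,-2,-1}$ introduced, and the constraint \eqref{usld} isolated. The betweenness $m_{-2}\geq k_{-2}\geq m_{-1}$ and the non-negativity of $m_{-2}-k_{-2}$ have already been observed.

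First I would verify that \eqref{mainsootd1} is a well-defined linear map on the span of products \eqref{rein} satisfying \eqref{reshd} and \eqref{usld}. Since in this span the power of $a_{-3,-2,-1}$ is the fixed value $q_{-3,-2,-1}=m_{-1}$, dividing out this factor is harmless, and the remaining determinants appearing on the left which are not sent to $1$ on the right are precisely the $g_2$-highest determinants that already appear in the $\mathfrak{gl}_3$-picture. By Lemma \ref{sootful}, after the substitutions dictated by Lemma \ref{soot2} the only relations these determinants satisfy are Plucker relations, and the corresponding Plucker relations hold on the right for the $\mathfrak{gl}_3$-determinants; hence \eqref{mainsootd1} descends to the quotient and is well-defined.

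Next I would check bijectivity. Injectivity follows because the degree constraints in \eqref{reshd} together with \eqref{usld} determine the multiset of exponents on the left from the image on the right (the exponents $p_1,p_{-1},p_{-3},p_{-1,1},q_{-3,1}$ are exactly what appears in the monomial \eqref{mainsootd1}, and $q_{-3,-2}, q_{-3,-1}, q_{-3,-2,-1}$ are fixed by \eqref{usld} and \eqref{reshd}). Surjectivity is seen by noting that an arbitrary monomial $a_1^{p_1}a_{-1}^{p_{-1}}a_{-2}^{p_{-3}}a_{-1,1}^{p_{-1,1}}a_{-2,1}^{q_{-3,1}}$ arising as a solution of the $\mathfrak{gl}_3\downarrow\mathfrak{gl}_1\supset\mathfrak{gl}_2\downarrow\mathfrak{gl}_0$ extension with parameters $m_{-3},m_{-2},k_{-2}$ is the image of the corresponding monomial in the $D$-determinants with the fixed $q_{-3,-2}, q_{-3,-1}, q_{-3,-2,-1}, p_{-3,2},\dots$, and admissibility in the sense of Theorem \ref{lempoc} is preserved since the total degree conditions match. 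Thus \eqref{mainsootd1} is an isomorphism from the space described in \eqref{usld} onto the space of solutions of $\mathfrak{gl}_{3}\downarrow\mathfrak{gl}_{1}\supset\mathfrak{gl}_{2}\downarrow\mathfrak{gl}_{0}$ with parameters $m_{-3},m_{-2},k_{-2}$.

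Finally I would pull back the base of the target space, which by Lemma \ref{lemoch} is indexed by tableaux of the shape \eqref{aaa}, to obtain a base of the source indexed by the tableaux \eqref{ddd}. The half-integer case is handled uniformly because, as remarked at the end of Section \ref{abcd2}, the space in Lemma \ref{lemoch} only depends on the differences $m_{-3}-m_{-2},\,m_{-2}-k_{-2}$, so simultaneously half-integer entries are admissible. The main obstacle I foresee is the well-definedness check: after the nontrivial eliminations of $a_{-3,2}, a_{-3,-1,1}, a_{-3,1,2}$, the resulting relations among the remaining $D$-series determinants must match the Plucker relations in the $\mathfrak{gl}_3$-picture term by term, and this requires invoking Lemma \ref{sootful} carefully to confirm that no extra relations have been introduced or lost by the substitution.
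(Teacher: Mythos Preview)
Your proposal is correct and follows essentially the same route as the paper: the paper's argument for this lemma consists precisely of the sentence that ``the same arguments as in the previous Section show that the mapping \eqref{mainsootd1} sends isomorphically the space of functions \eqref{reshd} that satisfy \eqref{usld} into the space of solutions of the extension $\mathfrak{gl}_{3}\downarrow\mathfrak{gl}_{1}\supset\mathfrak{gl}_{2}\downarrow\mathfrak{gl}_{0}$ with $m_{-3},m_{-2},k_{-2}$,'' after which Lemma \ref{lemoch} is invoked. You have simply made those arguments explicit---the well-definedness check via Lemma \ref{sootful} and the Plucker relations, the bijectivity verification, and the pullback of the base---so your proof is a fleshed-out version of the paper's one-line reduction.
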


Analogously to  Corollary \ref{cabc} for series  $A$, $B$, $C$,  for the series $D$ and $m_{-1}\geq 0$ we obtain

 \begin{lem}
 In the case $m_{-1}\geq 0$  in the space of functions \eqref{reshd} that satisfy \eqref{usld} there exists a base given by integer or half-integer tableaux of type

\begin{align*}
&m_{-3}\,\,\,\,\,\,\,\\
&\,\,\,\,\,\,\,\,\,\,\,\,k_{-3}\,\,\,\,\,\,   \mathcal{D}   \,\,\,\,\,\,\,\,\,\,\,\,\,\,\,\\
&\,\,\,\,\,\,\,\,\,\,\,\,\,\,\,\,\,\,\,\,\,\,\,\,s_{-3}
\end{align*}
 where  $\mathcal{D}$ is a tableau for  $\mathfrak{o}_4$ with the highest weight
 $[m_{-2},m_{-1}]$.
 \end{lem}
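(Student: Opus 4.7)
The plan is to mimic the passage from Lemma~\ref{lemabc} to Corollary~\ref{cabc} in the $D$-case with $m_{-1}\geq 0$. The previous lemma fixed the right-side parameters $q_{-3,1}, q_{-3,-2}, q_{-3,-2,-1}$ and produced a base indexed by a single \eqref{ddd}-tableau with one fixed value of $k_{-2}$; now we let these parameters vary and repackage the new degrees of freedom as an $\mathfrak{o}_4$-tableau $\mathcal{D}$ with highest weight $[m_{-2},m_{-1}]$ in the right corner.

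First, I would decompose the space of admissible functions of the form \eqref{reshd} as a direct sum over admissible right-side data $\mathbf{q}=(q_{-3,1},q_{-3,-2})$ (recall that $q_{-3,-2,-1}=m_{-1}$ is forced). Linear independence of the summands follows from Lemma~\ref{sootful}: after reducing via Lemma~\ref{soot2} to the independent determinants, monomials in these independent determinants are linearly independent in the realization on the subgroup $Z$, so the decomposition is internal. Then the previous lemma furnishes a base of each summand indexed by $(k_{-3}, s_{-3})$ with $k_{-2}=k_{-2}(\mathbf{q})$ read off $\mathbf{q}$.

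Second, I would reorganize the sum over $\mathbf{q}$ as a sum over $\mathfrak{o}_4$-tableaux $\mathcal{D}$ of highest weight $[m_{-2},m_{-1}]$. To each right-side monomial in the $g_2$-highest determinants $a_{-3,2}, a_{-3,-2,-1}, a_{-3,-1,1}, a_{-3,1,2}$ (Theorem~\ref{le5}, case $D$, $m_{-1}\geq 0$), I would associate a formal $\mathfrak{o}_4$-monomial in determinants supported on the indices $\{-2,-1,1,2\}$, by analogy with the passage \eqref{aaa}$\mapsto$\eqref{bbb} in the $A,B,C$ case. The $\mathfrak{o}_4$-base \eqref{gco4} of Section~\ref{abcd2}, with the bookkeeping \eqref{klopr1}, indexes these monomials by tableaux $\mathcal{D}$; combining this with the base of Step~1 produces the claimed base.

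The main obstacle is showing that the reorganization in Step~2 is compatible with the relations of Lemma~\ref{soot2}, which in the $D$-case are genuinely nonlinear (unlike the essentially Pl\"ucker-only relations used in the $A,B,C$ case of Corollary~\ref{cabc}). However, Lemma~\ref{soot2} is exactly the set of relations that governs the $\mathfrak{o}_4$-construction in Section~\ref{abcd2}, so the assignment $\mathbf{q}\mapsto$ formal $\mathfrak{o}_4$-monomial and its inverse are literally the reduction/expansion used there. The numerical count matches as well: the inequalities \eqref{nero42} for $(k_{-2},s_{-2})$ coincide with the range of admissible $\mathbf{q}$ with prescribed $k_{-2}(\mathbf{q})$, making the bijection between summands and $\mathfrak{o}_4$-tableaux manifest.
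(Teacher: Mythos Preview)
Your proposal is correct and follows the paper's own argument, which simply states that the lemma is obtained ``analogously to Corollary~\ref{cabc}'': one lets the right-side data previously fixed by \eqref{usld} vary, associates to each choice a formal $\mathfrak{o}_4$-monomial (as in the passage \eqref{aaa}$\mapsto$\eqref{bbb}), and invokes the $\mathfrak{o}_4$-base of Section~\ref{abcd2}. One small imprecision: your ``right-side monomials'' should include all order-$2$ and order-$3$ determinants from Theorem~\ref{le5} (i.e.\ also $a_{-3,-2},a_{-3,-1},a_{-3,1},a_{-1,1}$), not only the four $D$-specific ones you list, since the $\mathfrak{o}_4$-monomials of Section~\ref{abcd2} involve determinants of both orders.
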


\subsubsection{The case $m_{-1}< 0$}

In the case $m_{-1}<0$ we operate as follows.

In the case under consideration the following inequalities do hold

\begin{align}
\begin{split}
\label{reshd2}
&p_{-3}+p_{-1}+p_{1}=m_{-3}-m_{-2},\,\,\,\,\\& p_{-3,-2}+p_{-3,-1}+p_{-3,1}+p_{-1,1}+p'_{-3,2}+p''_{-3,2}=m_{-2}-m_{-1},\\
& p_{-3,-2,-1}+p_{-3,-1,1}+2p_{-3,-1,2}=-m_{-1}.
\end{split}
\end{align}

All powers are non-negative.

Using relations from Lemma \ref{soot2}  we can express determinants $a_{-3,2}, a_{-3,-1,1}, a_{-3,-1,2}$ through other determinants, the powers of determinants
$a_{-3,-2}$, $a_{-3,-1}$, $a_{-3,1}$, $a_{-3,-2,1}$ are changed by the following ruler.

\begin{enumerate}
\item The power of  $a_{-3,-2}$ is changed to  $q_{-3,-2}=p_{-3,-2}-p_{-3,2}-p_{-3,-1,1}-2p_{-3,-1,2}$.
\item  The power of   $a_{-3,1}$ is changed to  $q_{-3,1}=p_{-3,1}+p_{-3,2}$.
\item The power of  $a_{-3,-1}$ is changed to $q_{-3,-1}=p_{-3,-1}+p_{-3,2}+p_{-3,-1,1}+2p_{-3,-1,2}$.
\item The power of   $a_{-3,-2,1}$ is changed to  $q_{-3,-2,1}=-m_{-1}$.
\end{enumerate}

Note that
$$
q_{-3,-2}+q_{-3,-1}+q_{-3,1}=m_{-2}-m_{-1}.
$$

The power  $q_{-3,-2}$ can be negative but  $q_{-3,-1}$, $q_{-3,1}$ is non-negative.

Consider functions such that

\begin{align}
\begin{split}
\label{usld2}
&q_{-3,1}+p_{-1,1}=m_{-2}-k_{-2},\\
&q_{-3,-1},q_{-3,-2},q_{-3,-2,1},\text{   are fixed }
\end{split}
\end{align}
Then

$$m_{-2}\geq k_{-2}\geq -m_{-1}.$$

The same arguments as in the  previous Section show that  the mapping


\begin{align}
\begin{split}
\label{mainsootd2}
& f\mapsto a_1^{p_{1}}a_{-1}^{p_{-1}}a_{-2}^{p_{-3}}a_{-1,1}^{p_{-1,1}}a_{-2,1}^{q_{-3,-1}}
\end{split}
\end{align}

 is an isomorphism between the space of polynomials  \eqref{reshd} that satisfy \eqref{usld} into
  the space of solution of the extension problem $\mathfrak{gl}_{3}\downarrow\mathfrak{gl}_{1}\supset\mathfrak{gl}_{2}\downarrow\mathfrak{gl}_{0}$ with $m_{-3},m_{-2},k_{-2}$.
We obtain the following Statements.

\begin{lem}
 When $m_{-1}<0$ in the space of functions \eqref{rein}, that satisfy \eqref{usld} there exists a base indexed by integer or half-integer  tableaux

\begin{align}
\begin{split}
\label{ddd2}
&m_{-3}\,\,\,\,\,\,\,\,\,m_{-2}\\
&\,\,\,\,\,\,\,\,\,\,\,\,k_{-3}\,\,\,\,\,\,   k_{-2}   \,\,\,\,\,\,\,\,\,\,\,\,\,\,\,\\
&\,\,\,\,\,\,\,\,\,\,\,\,\,\,\,\,\,\,\,\,\,\,\,\,s_{-3}
\end{split}
\end{align}
 \end{lem}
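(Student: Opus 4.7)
The plan is to follow the template already established for the $m_{-1}\geq 0$ case (and for series $A,B,C$), deducing the lemma as a direct corollary of the isomorphism \eqref{mainsootd2} together with Lemma~\ref{lemoch}. The point is that once the constraint \eqref{usld2} fixes the exponents $q_{-3,-1}$, $q_{-3,-2}$ and $q_{-3,-2,1}$, the only remaining freedom in the function \eqref{rein} (after rewriting via Lemma~\ref{soot2}) lies in the exponents of $a_{-3},a_{-1},a_{1},a_{-1,1}$ together with the sum $q_{-3,1}+p_{-1,1}=m_{-2}-k_{-2}$; these are exactly the exponents that survive on the right-hand side of \eqref{mainsootd2}.

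First I would record explicitly that after removing the dependent determinants $a_{-3,2}$, $a_{-3,-1,1}$, $a_{-3,-1,2}$ via Lemma~\ref{soot2}, the only surviving non-trivial relations among the remaining generators on the $D$ side are the Pl\"ucker relations of Lemma~\ref{sootpl}; this is guaranteed by Lemma~\ref{sootful}. Since the Pl\"ucker relations also hold identically on the $\mathfrak{gl}_3$ side, the map \eqref{mainsootd2} is well-defined on admissible linear combinations. Next I would verify that it is a linear isomorphism onto the span of monomials
\[
a_{1}^{p_{1}}a_{-1}^{p_{-1}}a_{-2}^{p_{-3}}a_{-1,1}^{p_{-1,1}}a_{-2,1}^{q_{-3,-1}}
\]
with $p_{-3}+p_{-1}+p_{1}=m_{-3}-m_{-2}$ and $p_{-1,1}+q_{-3,-1}=m_{-2}-k_{-2}$, which is exactly the space appearing in Lemma~\ref{lemoch} with parameters $m_{-3},m_{-2},k_{-2}$. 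The inverse is realized by multiplication by the fixed prefactor carrying the frozen exponents $q_{-3,-2}$, $q_{-3,-1}$, $q_{-3,-2,1}$. Pulling back the basis furnished by Lemma~\ref{lemoch}, indexed by the integer or half-integer Gelfand--Tsetlin tableaux \eqref{aaa}, then yields the claimed basis indexed by \eqref{ddd2}.

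The main subtlety, and the principal difference with the $m_{-1}\geq 0$ case, is that the exponent $q_{-3,-2}=p_{-3,-2}-p_{-3,2}-p_{-3,-1,1}-2p_{-3,-1,2}$ may be negative after the substitution. This does not obstruct the argument because $q_{-3,-2}$ never appears on the right-hand side of \eqref{mainsootd2}: it enters only as a frozen label of the source. What does need to be checked is that the resulting $k_{-2}$ lies in the expected range $m_{-2}\geq k_{-2}\geq -m_{-1}$, which is precisely what the preceding setup records; combining the non-negativity of $q_{-3,1}+p_{-1,1}$ with the identity $q_{-3,-2}+q_{-3,-1}+q_{-3,1}=m_{-2}-m_{-1}$ and with $q_{-3,-1},q_{-3,1}\geq 0$ delivers both bounds after a short index computation directly parallel to the one already done for $m_{-1}\geq 0$, with the roles of $q_{-3,-1}$ and $q_{-3,1}$ interchanged in accordance with the flip $-1\leftrightarrow 1$ dictated by the sign of $m_{-1}$.
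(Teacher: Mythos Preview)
Your proposal is correct and follows essentially the same route as the paper: the paper's argument for this lemma is nothing more than ``the same arguments as in the previous Section show that the mapping \eqref{mainsootd2} is an isomorphism \ldots'', i.e.\ exactly the reduction via \eqref{mainsootd2} to the extension problem $\mathfrak{gl}_{3}\downarrow\mathfrak{gl}_{1}\supset\mathfrak{gl}_{2}\downarrow\mathfrak{gl}_{0}$ with parameters $m_{-3},m_{-2},k_{-2}$, after which Lemma~\ref{lemoch} supplies the tableau-indexed basis. Your write-up simply spells out the steps (well-definedness via Lemmas~\ref{soot2} and~\ref{sootful}, the range check on $k_{-2}$, the $-1\leftrightarrow 1$ swap) that the paper leaves implicit.
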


 \begin{lem}
In the case  $m_{-1}<0$ in the space of functions  \eqref{reshd}
that satisfy \eqref{usld} there exists a base indexed by  integer or
half-integer tableaux of type

\begin{align*}
&m_{-3}\,\,\,\,\,\,\,\\
&\,\,\,\,\,\,\,\,\,\,\,\,k_{-3}\,\,\,\,\,\,   \mathcal{D}   \,\,\,\,\,\,\,\,\,\,\,\,\,\,\,\\
&\,\,\,\,\,\,\,\,\,\,\,\,\,\,\,\,\,\,\,\,\,\,\,\,s_{-3}
\end{align*}
 where  $\mathcal{D}$  is a tableau for   $\mathfrak{o}_4$ with the highest weight
 $[m_{-2},m_{-1}]$.
 \end{lem}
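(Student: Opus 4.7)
The plan is to derive this lemma from the preceding one in exactly the way Corollary \ref{cabc} (series $A,B,C$) and its $D$, $m_{-1}\geq 0$ analog were derived from their respective per-tableau lemmas. Fix the outer data $m_{-3}$, $k_{-3}$, $s_{-3}$ and allow the parameters $q_{-3,-1}$, $q_{-3,-2}$, $q_{-3,-2,1}$ (together with the partition $q_{-3,1}+p_{-1,1}=m_{-2}-k_{-2}$ into its two summands) that were held fixed in \eqref{usld2} now to vary subject only to non-negativity and to the constraint $q_{-3,-2}+q_{-3,-1}+q_{-3,1}=m_{-2}-m_{-1}$ together with $q_{-3,-2,1}=-m_{-1}$. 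By the previous lemma each admissible choice of these parameters contributes, inside the fixed outer datum, the corresponding vector indexed by \eqref{ddd2}, and vectors coming from distinct parameter choices are linearly independent since their exponent vectors are distinct.

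Next, to each admissible $(q_{-3,-1},q_{-3,-2},q_{-3,1},p_{-1,1})$ I would associate an auxiliary monomial
\begin{equation*}
b_{-1}^{q_{-3,-1}}\,b_{-2}^{q_{-3,-2}}\,b_{1}^{q_{-3,1}}\,b_{-1,1}^{p_{-1,1}}
\end{equation*}
together with the implicit fixed factor $\bar b_{-2,1}^{-m_{-1}}$, reading the $b$-variables as the determinants that, for the algebra $\mathfrak{o}_4$ in the case $m_{-1}<0$, are highest with respect to $\mathfrak{o}_2$ (here the roles of $1$ and $-1$ are swapped, matching the phenomenon described after Proposition \ref{stroka042}). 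Under this identification the admissibility relations on the $q$'s become exactly the relations that select $\mathfrak{o}_4$-highest vectors of highest weight $[m_{-2},m_{-1}]$ with $m_{-1}<0$, as listed in Theorem \ref{le5}. Thus the span of all admissible auxiliary monomials is isomorphic to the space of such $\mathfrak{o}_4$-highest vectors, and by the base construction for $\mathfrak{o}_4$ from the previous section (tableaux \eqref{gco41}, inequalities \eqref{nero411}) this span has a distinguished base indexed by $\mathfrak{o}_4$-tableaux $\mathcal{D}$.

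Transporting this base back through the per-tableau correspondence of the previous lemma assembles, for each $(m_{-3},k_{-3},s_{-3})$, a base of the relevant subspace of \eqref{rein} indexed by the enlarged tableaux with $\mathcal{D}$ in the right middle slot. Varying $(m_{-3},k_{-3},s_{-3})$ over their admissible range then yields the desired base of the whole space of $g_2$-highest vectors in the given $g_3$-representation.

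The main obstacle, as in the $m_{-1}\geq0$ case, is to verify that the bookkeeping of exponents matches: one must check that after the substitutions eliminating the dependent determinants $a_{-3,2}$, $\bar a_{-3,-1,1}$, $\bar a_{-3,-1,2}$ via Lemma \ref{soot2}, the resulting constraints on $q_{-3,-2}$ (which may be negative) together with $q_{-3,-1},q_{-3,1}\geq0$ correspond precisely to the admissibility conditions for $\mathfrak{o}_4$-highest vectors of weight $[m_{-2},m_{-1}]$ in the $m_{-1}<0$ regime, and that the linear-combination coefficients used to pass from auxiliary monomials \eqref{bbb}-type to $\mathcal{D}$-tableaux agree with those used to pass from the per-tableau vectors \eqref{ddd2} to the enlarged tableau. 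This is a symmetric analog of the argument carried out for $m_{-1}\geq0$, with $1$ and $-1$ interchanged.
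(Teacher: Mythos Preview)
Your proposal is correct and follows the same approach the paper takes: the paper does not give a separate argument for this lemma, deriving it from the preceding per-tableau lemma in exactly the way Corollary~\ref{cabc} was derived from Lemma~\ref{lemabc}, by unfixing the auxiliary parameters, associating $b$-monomials playing the role of $\mathfrak{o}_4$-determinants (with the $1\leftrightarrow-1$ swap you note), and invoking the $\mathfrak{o}_4$-tableau base from Section~\ref{abcd2}. One small wording slip: you first say the $q$-parameters vary ``subject only to non-negativity'', but as you correctly observe later, $q_{-3,-2}$ may be negative; only $q_{-3,-1},q_{-3,1}\geq 0$ is required, matching the paper's remark after \eqref{usld2}.
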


\subsection{Considerations in the case of arbitrary $n$}

To consider the case of an arbitrary  $n$  we consider the sequence of extensions

$$
g_{n}\downarrow g_{n-1}\supset g_{n}\downarrow g_{n-1}\supset...\supset g_{3}\downarrow g_{2},
$$

 As it is done above,  one can show that all these extensions are isomorphic to the extension $\mathfrak{gl}_{3}\downarrow\mathfrak{gl}_{1}\supset\mathfrak{gl}_{2}\downarrow\mathfrak{gl}_{0}$.
The isomorhpism for futher extensions is analogous to those constructed in Section \ref{acde} for all series.

\subsection{Summary}

We come to the Theorem

 \begin{thm}
 \label{osnl1}
 Let us be given a  representation of  $g_n$ with the highest weight $[m_{-n},...,m_{-1}]$ in the cases  $B$, $C$, $D$ and
  $[m_{-n},...,m_{-1},0]$ in the case  $A$. Consider the  problem  $g_{n}\downarrow g_{n-1}$.
  Then in the space  of  $g_{n-1}$-vectors there exists a base given by tableau such that in the whole tableau (including  $\mathcal{D}$, except $s_{-2}$ in the of the series  $D$)  the betweness conditions hold

\begin{align}
\begin{split}
\label{diag}
&m_{-n}\,\,\,...\,\,\,\,m_{-3}\,\,\,\,\,\,\,\\
&\,\,\,\,\,\,\,\,\,\,\,\,k_{-n}\,\,\,...\,\,\,\,k_{-3}\,\,\,\,\,\,   \mathcal{D}   \,\,\,\,\,\,\,\,\,\,\,\,\,\,\,\\
&\,\,\,\,\,\,\,\,\,\,\,\,\,\,\,\,\,\,\,\,\,\,\,\,s_{-n}\,\,\,...\,\,\,\,s_{-3}
\end{split}
\end{align}
where  $\mathcal{D}$  is a tableau for   $g_2$ with the highest weight $[m_{-2},m_{-1}]$  for  $B$, $C$,   $D$  and   $[m_{-2},m_{-1},0]$  for  $A$.
All elements are simultaneously integer or half-integer  for seties  $B$, $D$
\end{thm}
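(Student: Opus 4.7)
The plan is to prove Theorem~\ref{osnl1} by induction on $n$, combining the base case $n=2$ worked out series by series in Section~\ref{abcd2} (which already produces the $g_2$-subtableau $\mathcal{D}$ in all four series, handling the integer/half-integer split as well as both signs of $m_{-1}$ in the $D$ case) with the recursive peeling-off of the topmost extension carried out for $n=3$ in Section~\ref{abcdn}.

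For the inductive step at level $n$, I start from the description of a $g_{n-1}$-highest vector given by Theorems~\ref{le5} and~\ref{lempoc}: an admissible polynomial (or, in the half-integer case, admissible function) in a prescribed list of determinants. I then fix the ``right-hand'' exponents, namely those of the order-$n$ determinants used to read off $\mathcal{D}$ together with the exponent of $a_{-n,\ldots,-3,-2,-1}$ or its series-specific analogue, as in~\eqref{usl} (respectively~\eqref{usld}, \eqref{usld2} for series~$D$). Invoking the correspondence~\eqref{mainsoot} (respectively~\eqref{mainsootd1}, \eqref{mainsootd2}), I transfer this fibre isomorphically into the space of solutions of the extension $\mathfrak{gl}_3\downarrow\mathfrak{gl}_1\supset\mathfrak{gl}_2\downarrow\mathfrak{gl}_0$ with data $m_{-n},m_{-n+1},k_{-n}$, whose base is supplied by Lemma~\ref{lemoch}. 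This yields the entries $k_{-n}$ and $s_{-n}$ together with the corresponding betweenness conditions. Iterating this down the chain
\[
g_n\downarrow g_{n-1}\supset g_{n-1}\downarrow g_{n-2}\supset\ldots\supset g_3\downarrow g_2,
\]
produces all pairs $k_{-i},s_{-i}$ for $i=3,\ldots,n$, while the base case supplies $\mathcal{D}$; their concatenation is exactly the tableau~\eqref{diag}.

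The main obstacle is to verify that the map~\eqref{mainsoot} is well defined, i.e.\ that it respects every algebraic relation among the determinants appearing on either side. By Lemma~\ref{sootful} the only relations are the Plucker identities, which hold on both sides and therefore pose no difficulty, together with the series-specific relations of Lemma~\ref{soot2}. In series~$D$ the latter force one first to eliminate $a_{-3,2}$, $a_{-3,-1,1}$, $a_{-3,1,2}$ (or the $\bar a$-analogues when $m_{-1}<0$) and to pass to the rewritten exponents $q_{-3,-2},q_{-3,-1},q_{-3,1}$, where $q_{-3,-2}$ may become negative; this is the single point where the argument genuinely deviates from the uniform treatment for series $A$, $B$, $C$. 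The delicate bookkeeping is then to confirm that, after this elimination, the inequalities required by~\eqref{gco4} or~\eqref{gco41} at the bottom of the chain follow from the positivity constraints on the remaining exponents, and that the failure of betweenness at $s_{-2}$ in series~$D$ arises in precisely the controlled way permitted by the footnote attached to~\eqref{gco4}.
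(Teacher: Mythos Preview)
Your proposal is correct and follows essentially the same approach as the paper: the base case $n=2$ from Section~\ref{abcd2}, then an iterated use of the correspondences \eqref{mainsoot}, \eqref{mainsootd1}, \eqref{mainsootd2} to reduce each extension step to $\mathfrak{gl}_3\downarrow\mathfrak{gl}_1\supset\mathfrak{gl}_2\downarrow\mathfrak{gl}_0$ via Lemma~\ref{lemoch}, with the well-definedness check resting on Lemma~\ref{sootful} and the elimination of dependent determinants via Lemma~\ref{soot2} in series~$D$. One small slip: the data fed into the extension problem at the outermost step is $m_{-n},m_{-n+1},k_{-n+1}$ (not $k_{-n}$), and the output is the pair $k_{-n},s_{-n}$; compare the paper's $m_{-3},m_{-2},k_{-2}$ with output $k_{-3},s_{-3}$ in Lemma~\ref{lemabc}.
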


\begin{cor}
\label{osncor}
The spaces of highest vectors with a fixed tableau   $\mathcal{D}$ are isomorphic for series $A$, $B$, $C$, $D$.
\end{cor}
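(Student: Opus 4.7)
The plan is to reduce the general $n$ case to the $n=3$ case already handled in Lemma \ref{lemabc} and its $D$-counterparts, by iterating the extension step along the chain $g_n\downarrow g_{n-1}\supset g_{n-1}\downarrow g_{n-2}\supset\dots\supset g_3\downarrow g_2$. First I would invoke Theorem \ref{lempoc} to represent every $g_{n-1}$-highest vector as an admissible function in the determinants listed there, subject to the degree constraints of Lemma \ref{lfmf}. Then I would fix a $g_2$-tableau $\mathcal{D}$ with highest weight $[m_{-2},m_{-1}]$ (respectively $[m_{-2},m_{-1},0]$ in the case $A$); by Section \ref{abcd2} this prescribes the innermost block of the function, supported on variables with indices in $\{-3,-2,-1,0,1,2\}$ or their $D$-analogues.

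The core step would then be to apply, depending on the series and on the sign of $m_{-1}$, the substitution rule \eqref{mainsoot}, \eqref{mainsootd1} or \eqref{mainsootd2} introduced in Section \ref{abcdn}. Restricted to vectors whose auxiliary parameters $(q_{-3,-2},q_{-3,-1},\dots)$ are fixed as in \eqref{usl}, \eqref{usld}, \eqref{usld2}, this rule identifies the corresponding span with the space of solutions of the extension problem $\mathfrak{gl}_{3}\downarrow\mathfrak{gl}_{1}\supset\mathfrak{gl}_{2}\downarrow\mathfrak{gl}_{0}$ with parameters $m_{-3},m_{-2},k_{-2}$, on which Lemma \ref{lemoch} already supplies a basis indexed by a pair $(k_{-3},s_{-3})$ satisfying the required betweenness inequalities. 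For general $n$ I would iterate the same procedure column by column: at the transition corresponding to $g_k\downarrow g_{k-1}$, the substitution identifies the relevant span with a $\mathfrak{gl}_3$-type extension problem in the triple $(m_{-k},m_{-k+1},k_{-k+1})$, yielding the next pair $(k_{-k},s_{-k})$ together with the inequalities $m_{-k}\geq k_{-k}\geq m_{-k+1}$ and $k_{-k+1}\geq s_{-k}\geq k_{-k+2}$. Composing $n-2$ such isomorphisms should produce a basis of the full space of $g_{n-1}$-highest vectors indexed by tableaux of the form \eqref{diag}, with the betweenness condition imposed at every position except $s_{-2}$ in series $D$, which is already singled out in the $n=3$ case.

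The main obstacle I expect is to ensure that the substitution from $g_k$-determinants to $\mathfrak{gl}_3$-determinants is well-defined at every iteration, i.e.\ respects all relations among the variables. This will be handled by invoking Lemma \ref{sootful}: the only relations are the Plucker relations of Lemma \ref{sootpl}, which are valid for determinants on both sides of the substitution, and the series-specific relations of Lemma \ref{soot2}, which involve precisely those determinants eliminated when passing to the $q$-variables. Hence the rule descends to a well-defined isomorphism at each stage and the chain composes without conflict.

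The corollary will then be immediate from the construction: for a fixed tableau $\mathcal{D}$, each of the four spaces of $g_{n-1}$-highest vectors has been identified with the same iterated $\mathfrak{gl}_3$-extension space, so they are all canonically isomorphic, with the isomorphism realized explicitly by the substitution rules \eqref{mainsoot}, \eqref{mainsootd1}, \eqref{mainsootd2}.
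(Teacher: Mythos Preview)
Your proposal is correct and follows essentially the same route as the paper: the corollary is obtained as an immediate by-product of the construction behind Theorem \ref{osnl1}, namely that for each series the space of $g_{n-1}$-highest vectors with fixed $\mathcal{D}$ is identified, via the substitutions \eqref{mainsoot}, \eqref{mainsootd1}, \eqref{mainsootd2} iterated along the chain of extensions, with the \emph{same} $\mathfrak{gl}_3$-type extension space described in Lemma \ref{lemoch}. The paper in fact states the general-$n$ iteration more tersely than you do (``one can show that all these extensions are isomorphic to the extension $\mathfrak{gl}_{3}\downarrow\mathfrak{gl}_{1}\supset\mathfrak{gl}_{2}\downarrow\mathfrak{gl}_{0}$''), so your column-by-column description and your explicit appeal to Lemma \ref{sootful} to justify well-definedness of the substitution are a welcome elaboration rather than a deviation.
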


Let us calculate a weight of a vector corresponding to a tableau.
The $(-k)$-th component of the weight is a sum of powers of
determinants that contain  $(-k)$ minus the sum of powers of
determinants that contain  $k$.

\begin{prop}
The lower row is a   $g_{n-1}$-weight of the corresponding $g_{n-1}$-highest vector.
\end{prop}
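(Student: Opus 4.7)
The plan is to verify, for each $k \in \{2, 3, \ldots, n\}$, that the eigenvalue of the Cartan generator $F_{-k,-k}$ (respectively $E_{-k,-k}$ in the series $A$) on the vector encoded by \eqref{diag} equals $s_{-k}$. The basic computational tool is the explicit diagonal action extracted from \eqref{fii} together with \eqref{fbd}, \eqref{fd}: on a product $\prod_\alpha a_{I_\alpha}^{p_\alpha}$ of determinants, $F_{-k,-k} = E_{-k,-k} - E_{k,k}$ acts as the scalar $\sum_\alpha p_\alpha\bigl(\#\{i \in I_\alpha : i = -k\} - \#\{i \in I_\alpha : i = k\}\bigr)$, with an analogous formula for the bar-determinants in the series $D$.

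For $k \in \{3, \ldots, n\}$ I would transport the calculation to the already-understood $\mathfrak{gl}_3$ side via the weight-preserving isomorphisms \eqref{mainsoot}, \eqref{mainsootd1}, \eqref{mainsootd2}. Inspecting the correspondence (for example $a_{-n} \leftrightarrow a_{-2}$, $a_{-n,1} \leftrightarrow a_{-2,1}$, and so on) one sees that the multiset of $\pm n$'s in the column indices of a factor on the $g_n$-side exactly matches the multiset of $\pm 2$'s on the $\mathfrak{gl}_3$-side, and that $+n$ never occurs among the determinants listed in Theorem \ref{le5}. Hence the $F_{-n,-n}$-eigenvalue on the $g_n$-side coincides with the $E_{-2,-2}$-eigenvalue on the $\mathfrak{gl}_3$-side. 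A direct count using the polynomial \eqref{ac} and the definition of $s$ in Lemma \ref{lemoch} shows that this eigenvalue is precisely the bottom entry, giving $s_{-n}$. Iterating this identification along the tower $g_n \downarrow g_{n-1} \supset g_{n-1} \downarrow g_{n-2} \supset \cdots \supset g_3 \downarrow g_2$ yields $F_{-k,-k}$-eigenvalue $s_{-k}$ for every $k \in \{3, \ldots, n\}$.

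For $k = 2$ the remaining component is the $(-2)$-component of the $g_2$-weight of the vector encoded by $\mathcal{D}$, already computed in Section \ref{abcd2}: for $\mathfrak{sp}_4$ and the $\mathfrak{gl}_3 \downarrow \mathfrak{gl}_1$ model via the equivalence of problems recalled in Section \ref{ac2}, for $\mathfrak{o}_5$ via the explicit formula proved at the end of that subsection, and for $\mathfrak{o}_4$ via Propositions \ref{stroka041} and \ref{stroka042}. In every case the $(-2)$-component equals the bottom entry $s_{-2}$ of $\mathcal{D}$, completing the identification of the lower row of \eqref{diag} with the $g_{n-1}$-weight.

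The main obstacle is the series $D$ with $m_{-1} < 0$, where the construction involves bar-determinants and the substitutions of Lemma \ref{soot2} redistribute exponents among several determinants (and effectively swap the roles of $+1$ and $-1$). Here, after the bookkeeping of \eqref{mainsootd2}, one must verify that the signed $\pm k$ count—which is what the Cartan reads off—is preserved under the rewriting, since $F_{-k,-k}$ acts on a bar-determinant with the opposite sign convention at index $\pm 1$ compared to an ordinary determinant. The verification is routine but has to be carried out case-by-case before the isomorphism with the $\mathfrak{gl}_3$-side can be invoked to conclude the count equals $s_{-k}$.
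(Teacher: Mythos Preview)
Your overall strategy---reduce the $(-2)$-component to the $g_2$ case already handled in Section~\ref{abcd2}, and for $k\ge 3$ transport the count through the correspondences \eqref{mainsoot}, \eqref{mainsootd1}, \eqref{mainsootd2}---is exactly the paper's approach. But the execution of the transport step has a real error.

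You claim that under \eqref{mainsoot} the multiset of $\pm n$'s on the $g_n$-side matches the multiset of $\pm 2$'s on the $\mathfrak{gl}_3$-side, hence $F_{-n,-n}$-eigenvalue $=$ $E_{-2,-2}$-eigenvalue. This is false. Take $n=3$: the monomial $f$ in \eqref{rein} contains the factors $a_{-3,-2}^{p_{-3,-2}}$, $a_{-3,-1}^{p_{-3,-1}}$ and the whole block $f_2$, \emph{all} of which carry the index $-3$, yet all of which are sent to $1$ under \eqref{mainsoot}. Their contribution to the $F_{-3,-3}$-eigenvalue is
\[
p_{-3,-2}+p_{-3,-1}+(\text{sum of exponents in }f_2)
=(k_{-2}-m_{-1})+m_{-1}=k_{-2},
\]
using \eqref{usl} and the constraint on $f_2$. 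So the two eigenvalues differ by exactly $k_{-2}$. Correspondingly, on the $\mathfrak{gl}_3$-side of Lemma~\ref{lemoch} (where $a_{-2,-1}^{k_{-1}}$ has been divided out) the $E_{-2,-2}$-eigenvalue of the basis vector is $s_{-3}-k_{-2}$, \emph{not} $s_{-3}$; the bottom entry $s_{-3}$ is, by definition from Section~\ref{ac2}, the weight of the full vector \emph{with} the $a_{-2,-1}$-factor. The paper makes precisely this two-part split: the determinants outside the correspondence contribute $k_{-2}$, those inside contribute $s_{-3}-k_{-2}$, and the sum is $s_{-3}$. Your two mis-statements cancel numerically, but neither intermediate assertion is correct as written, and the argument as it stands does not prove the claim. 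The fix is straightforward: separate the fixed exponents from the ones participating in \eqref{mainsoot} and book-keep each piece, exactly as above.
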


\proof
Consider the case $n=3$, the case of an arbitrary $n$ is considered analogously. The index $(-2)$  is contained only in those determinants that participate in construction of   $\mathcal{D}$.  Using the statement for  $g_2$-tableau we conclude that $s_{-2}$ is a    $(-2)$-component of the weight.

Now consider the  $(-3)$-th component of the weight. The sum of powers,  that contain  $(-3)$ equals to  $k_{-2}$. The sum of powers of determinants that contain   $(-3)$ and  that participate in the correspondence \eqref{mainsoot}, \eqref{mainsootd1}, \eqref{mainsootd2}  equals to
 $s_{-3}-k_{-2}$. Thus the sum of all powers equals to  $k_{-2}$.

\endproof

Let us give a formula for the eigenvalue of  $F_{-1,-1}$ or $F_{1,1}$.

 \begin{prop}\label{strokan}
The eigenvalue of  $F_{-1,-1}$  on the tableau \eqref{diag}
\begin{align*}
&-2\sum_{i=n}^{1}k_{-i}+\sum_{i=n}^1m_{-i}+\sum_{i=n}^2s_{-i} \text { in the case } \mathfrak{sp}_{2n},\mathfrak{o}_{2n},\\
&-2\sum_{i=n}^{1}k_{-i}+\sum_{i=n}^1m_{-i}+\sum_{i=n}^2s_{-i} +\sigma\text { in the case } \mathfrak{o}_{2n+1},\\
&-2\sum_{i=n}^{2}k_{-i}+\sum_{i=n}^1m_{-i}+\sum_{i=n}^2s_{-i} \text { in the case }\mathfrak{o}_{2n} \\
\end{align*}
In the case  $\mathfrak{o}_{2n}$ and
 $m_{-1}<0$  the eigenvalue of  $F_{1,1}$ on the tableau \eqref{diag}
\begin{align*}
&-2\sum_{i=n}^{2}k_{-i}+\sum_{i=n}^2m_{-i}-m_{-1}+\sum_{i=n}^2s_{-i} 
\end{align*}

\end{prop}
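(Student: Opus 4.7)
The plan is to compute the eigenvalue by using that $F_{-1,-1}=E_{-1,-1}-E_{1,1}$ is a Cartan element, hence acts additively on a product of determinants. By \eqref{edet1}, $E_{-1,-1}$ multiplies a determinant $a_{i_1,\ldots,i_k}$ by the multiplicity of $-1$ among its columns, and analogously $E_{1,1}$ with $+1$; these operators act on a product by Leibnitz, so the eigenvalue of $F_{-1,-1}$ on a monomial equals the sum of signed column-counts $\#(-1)-\#(+1)$ over all factors. Because the basis vector has a definite weight, this sum is the same on every monomial appearing in the expansion of the tableau vector, so it suffices to compute it on any one.

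Concretely, I would decompose the basis vector of \eqref{diag} according to the construction of Section \ref{abcdn} into three independent groups of factors: (i) the $g_2$-block $\mathcal{D}$; (ii) the extension factors, mapped isomorphically under \eqref{mainsoot} (respectively \eqref{mainsootd1}, \eqref{mainsootd2}) to a $\mathfrak{gl}_{n+1}$-GT polynomial with parameters $m_{-n},\ldots,m_{-3}$ on top and $k_{-n},\ldots,k_{-3}$, $s_{-n},\ldots,s_{-3}$ below; (iii) the separator powers $a_{-n,\ldots,-k-1,-k}^{\bullet}$ and $a_{-n,\ldots,-k-2,-1,1}^{\bullet}$, whose columns contain neither $-1$ nor $+1$, contributing zero. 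The $\mathcal{D}$-contribution is then the one given by Propositions \ref{stroka041}--\ref{stroka042} for the $\mathfrak{o}_4$ case, by the formula including the $\sigma$-correction derived in the $\mathfrak{o}_5$ subsection of Section \ref{abcd2}, or by the corresponding $\mathfrak{sp}_4$ formula obtained by specialising \eqref{ac} via the equivalence with $\mathfrak{gl}_3\downarrow\mathfrak{gl}_1$. The extension contribution is read off the standard Gelfand--Tsetlin weight formula for $\mathfrak{gl}_{n+1}$ applied to the tableau $(m_{-n},\ldots,m_{-3};k_{-n},\ldots,k_{-3};s_{-n},\ldots,s_{-3})$: since the correspondence \eqref{mainsoot} preserves the columns $\pm 1$ of every determinant (only the leftmost index, originally $-n$, is renamed into a smaller negative label of $\mathfrak{gl}_{n+1}$), the quantity $\#(-1)-\#(+1)$ for the extension factors coincides with the difference of the corresponding $\mathfrak{gl}_{n+1}$-weight components, which by the Gelfand--Tsetlin formula is a fixed linear combination of the $k_{-i}$, $m_{-i}$ and $s_{-i}$ for $i\geq 3$. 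Adding the three contributions and simplifying via the betweenness inequalities reproduces the stated expression in each series.

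The most delicate point will be the $\mathfrak{o}_{2n}$ case with $m_{-1}<0$: Lemma \ref{soot2} identifies the $\bar a$-determinants with non-trivial rational expressions in the $a$-determinants, the relevant correspondence is \eqref{mainsootd2} rather than \eqref{mainsoot}, and the natural Cartan to track is $F_{1,1}=-F_{-1,-1}$; the sign flip must be carried consistently through both the $\mathfrak{gl}_{n+1}$-correspondence and the $\mathfrak{o}_4$ formula of Proposition \ref{stroka042}. The passage from $n=3$ to arbitrary $n$ requires no new idea: it follows the iterated extension scheme $g_n\downarrow g_{n-1}\supset\cdots\supset g_3\downarrow g_2$ of Section \ref{abcdn}, with each step contributing an additional $-2k_{-i}+m_{-i}+s_{-i}$ summand for the same reason as in the three-variable case.
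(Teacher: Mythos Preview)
Your approach is essentially the same as the paper's: both compute the $F_{-1,-1}$-eigenvalue by splitting into the $g_2$-block contribution (handled by the formulas of Section~\ref{abcd2}, in particular Propositions~\ref{stroka041}--\ref{stroka042} and the $\mathfrak{o}_5$ computation) and the extension contribution (handled via the standard Gelfand--Tsetlin weight rule), the key observation in each case being that the correspondences \eqref{mainsoot}, \eqref{mainsootd1}, \eqref{mainsootd2} preserve the column indices $\pm 1$. One small slip in your write-up: the determinant $a_{-n,\ldots,-k-2,-1,1}$ does contain both $-1$ and $+1$ among its column indices, so its contribution to $F_{-1,-1}$ vanishes by cancellation $\#(-1)-\#(+1)=0$ rather than by absence of these indices; this does not affect the argument.
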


\proof
Consider first the cases  $A$, $B$, $C$ and  $D$ with  $m_{-1}\geq 0$.
Note that $\sum_{k=-n}^{3}(m_{-i}-k_{-i})$ is a sum of powers of determinants of order  less $n-2$ that contain  $1$,  and
$\sum_{k=-n}^{3}(k_{-i}-s_{-i})$ is a sum of powers of determinants of order  less  $n-1$ that contain   $-1$.  This follows from the fact that correspondences  \eqref{mainsoot}, \eqref{mainsootd1}, \eqref{mainsootd2}   the indices  $-1$ and  $1$  are preserved. And after this correspondence the sum of powers of determinanrts that contain   $1$  or   $-1$ can be calculated using the standart rulers of  calculation of the weigh of a  $\mathfrak{gl}_3$-tableau.

Now we have to calculate analogous sums for determinant of orders
$n-2$ and $n-1$. For series  $A$, $C$ these sums equal
$(m_{-2}-k_{-2})+(m_{-1}-k_{-1})$ and $(k_{-2}-s_{-2})$. For series
$B$ these sums equal   $(m_{-2}-k_{-2})+(m_{-1}-k_{-1})$ и
$(k_{-2}-s_{-2})+\sigma$.  This follows from the ruler of calculation of the weight of a $g_2$-tableau.

Now take the difference of these sums,
then one obtains the statement of the Proposition  for series $A$,
$C$. Thus in these cases we know the sum of powers of determinants
that contain $1$ and the sum of powers of determinants that contain
$-1$. Taking the difference of them we obtain the needed expression.

For series $D$ and $m_{-1}\geq 0$ the considerations are slightly
different: the formula for the difference of these sums of powers
for the determinant of orders $n-2$ and $n-1$  is given in
Proposition \ref{stroka041}.

The case $D$ with  $m_{-1}<0$ is considered analogously, but one must interchange $1$ and $-1$.

\endproof


\begin{thebibliography}{99}


\bibitem{zh2}D. P. Zhelobenko,  Compact Lie groups and their representations. – American Mathematical Soc., 1973. – Т. 40.

\bibitem{sh1} V. V. Shtepin,  Separation of multiple points of spectrum in the reduction
 $\mathfrak{sp}_{2n}\downarrow \mathfrak{sp}_{2n-2}$, Functional Analysis and Its Applications, 1986, 20:4, 336–338

\bibitem{sh2} V. V. Shtepin,  The intermediate orthogonal Lie algebra $\mathfrak{b}_{n-1/2}$ and its finite-dimensional representations
 Izvestiya: Mathematics, 1998, 62:3, 627–648

\bibitem{sh3}  V. V. Shtepin,   The intermediate Lie algebra  $\mathfrak{d}_{n-1/2}$, the weight scheme and finite-dimensional representations with highest weight, Izvestiya: Mathematics, 2004, 68:2, 375–404

\bibitem{Mol1} A. Molev, A basis for representations of symplectic Lie algebras, Comm. Math. Phys.
201, 1999, 591--618.

\bibitem{Mol2}A. Molev, Weight bases of Gelfand-Tsetlin type for representations of classical Lie algebras, J. Phys. A: Math. Gen. 33 (2000), 4143--4168.

\bibitem{Mol3}A. Molev,  A weight basis for representations of even orthogonal Lie algebras, Adv. Studies in Pure Math. 28 (2000), 223--242.

\bibitem{M}A. Molev. Yangians and classical Lie algebras, 2007, AMS, Mathematical Surveys and Monographs, vol. 143.

\bibitem{bb} G.E.  Biedenharn,  L.C. Baid, On the representations of semisimple Lie Groups II, J. Math. Phys., V. 4, N 12,
1963, 1449-1466.


\bibitem{a2}D.V. Artamonov,  The Gelfand-Tsetlin-Zhelobenko  base vectors for the series $B$,  arXiv:1607.08704v2

\end{thebibliography}
\end{document}